\newdimen\p@renwd \setbox0=\hbox{\phantom B} \p@renwd=\wd0
\def\bordersquare#1{\begingroup \m@th
  \setbox0=\vbox{\def\cr{\crcr\noalign{\kern2pt\global\let\cr=\endline}}
      \ialign{$##$\hfil\kern2pt\kern\p@renwd&\thinspace\hfil$##$\hfil
        &&\quad\hfil$##$\hfil\crcr
        \omit\strut\hfil\crcr\noalign{\kern-\baselineskip}
        #1\crcr\omit\strut\cr}}
  \setbox2=\vbox{\unvcopy0 \global\setbox1=\lastbox}
  \setbox2=\hbox{\unhbox1 \unskip \global\setbox1=\lastbox}
  \setbox2=\hbox{$\kern\wd1\kern-\p@renwd \left[ \kern-\wd1
    \global\setbox1=\vbox{\box1\kern2pt}
    \vcenter{\kern-\ht1 \unvbox0 \kern-\baselineskip} \,\right]$}
  \null\;\vbox{\kern\ht1\box2}\endgroup}
\newtheorem{theorem}{Theorem}[section]
\newtheorem{lemma}[theorem]{Lemma}
\newtheorem{corollary}[theorem]{Corollary}
\newtheorem{proposition}[theorem]{Proposition}
\newtheorem{sublemma}{}[theorem]
\theoremstyle{definition}
\theoremstyle{remark}
\numberwithin{equation}{section}
\newcommand{\ba}{\backslash}
\begin{document}

\title[Comatroids]{The smallest class of binary matroids closed under direct sums and complements}

\author{James Oxley}
\address{Mathematics Department\\
Louisiana State University\\
Baton Rouge, Louisiana}
\email{oxley@math.lsu.edu}

\author{Jagdeep Singh}
\address{Mathematics Department\\
Louisiana State University\\
Baton Rouge, Louisiana}
\email{jsing29@math.lsu.edu}

\subjclass{05B35, 05C25}
\date{\today}

\begin{abstract}

The class of cographs or complement-reducible graphs is the class of graphs that can be generated from $K_1$ using the operations of disjoint union and complementation. By analogy, this paper introduces the class of binary comatroids as the class of matroids that can be generated from the empty matroid using the operations of direct sum and taking complements inside of binary projective space. We show that a proper flat of a binary comatroid is a binary comatroid. Our main result identifies those binary non-comatroids for which every proper flat is a binary comatroid. The paper also proves the corresponding results for ternary matroids.

\end{abstract}

\maketitle

\section{Introduction}

The notation and terminology used in this paper follow
\cite{diestel} and \cite{ox1} except where otherwise indicated. All graphs and matroids considered here are \textbf{simple}. A \textbf{cograph} is defined recursively as follows:

\begin{enumerate}[label=(\roman*)]
    \item $K_1$ is a cograph;
    \item if $G_1$ and $G_2$ are cographs, then so is their disjoint union; and
    \item if $G$ is a cograph, then so is its complement.
\end{enumerate}

The class of cographs has been extensively studied over the last fifty years (see, for example, \cite{ corneil2, jung, sein}). In particular, there are numerous equivalent characterizations of cographs including that a cograph is a graph in which every connected induced subgraph has a disconnected complement.

Our goal in this paper is to give a matroid analogue of cographs by, loosely speaking, considering the smallest class of matroids that is closed under direct sums and complementation. One immediate obstacle to achieving this goal is that  matroids in general do not have complements. However, if $M$ is a simple uniquely $GF(q)$-representable matroid and $k \geq r(M)$, the $(GF(q),k)$-\textbf{complement} of $M$ is the matroid $PG(k-1,q) \ba T$ where $M \cong PG(k-1,q)|T$. Brylawski and Lucas \cite{brwlucas} (see also \cite[Proposition~10.1.7]{ox2}) showed that this $(GF(q),k)$-complement of $M$ is well-defined. By convention, we write $M^c$ for the $(GF(q), r(M))$-complement of $M$. Although a $GF(q)$-representable matroid need not be uniquely representable when $q \geq 4$, it is uniquely representable for $q$ in $\{2,3\}$. Thus we only introduce analogues of cographs for binary and ternary matroids. In particular, for $q$ in $\{2,3\}$, we define a $GF(q)$-\textbf{comatroid} recursively as follows:

\begin{enumerate}[label=(\roman*)]
    \item $U_{0,0}$ is a $GF(q)$-comatroid;
    \item if $M_1$ and $M_2$ are $GF(q)$-comatroids, then so is their direct sum; and
    \item if $M$ is a $GF(q)$-comatroid, then so is its $(GF(q),t)$-complement for all $t \geq r(M)$.
\end{enumerate}

As $PG(r-1,q)$ is the $(GF(q),r)$-complement of $U_{0,0}$, every projective geometry $PG(r-1,q)$ for $r \geq -1$ is a $GF(q)$-comatroid. In particular, as $U_{1,1}$ is $PG(0,q)$, we see that $U_{n,n}$ is a $GF(q)$-comatroid for all $n \geq 0$. We sometimes call $GF(2)$- and $GF(3)$-comatroids, \textbf{binary} and \textbf{ternary comatroids}, respectively.

The following characterization of $GF(q)$-comatroids is particularly useful.

\begin{theorem}
\label{equiv}
For $q$ in $\{1,2\}$, a simple $GF(q)$-representable matroid $M$ is a $GF(q)$-coma\-troid if and only if $M$ is $U_{0,0}$ or, for all flats $F$ of $PG(r(M)-1,q)$ with $r(F \cap E(M)) = r(F-E(M))$, the restriction of $PG(r(M)-1,q)$ to either $F\cap E(M)$ or $F-E(M)$ is disconnected.
\end{theorem}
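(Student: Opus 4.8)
The plan is to prove both implications by induction after a mild reformulation. For a simple $GF(q)$-representable matroid $M$ with $r(M)=r$ and any integer $t\ge r$, realize $M$ as $PG(t-1,q)|T$ with $T=E(M)$, and set $G_0=\mathrm{cl}(T)$, a flat of rank $r$; say that \emph{$M$ satisfies $(\star)$ at level $t$} if for every flat $F$ of $PG(t-1,q)$ with $r(F\cap T)=r(F\setminus T)$ at least one of the restrictions of $PG(t-1,q)$ to $F\cap T$ and to $F\setminus T$ is disconnected (we adopt throughout the convention that the empty matroid $U_{0,0}$ is disconnected, which is what makes each $PG(t-1,q)$ satisfy $(\star)$). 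Two easy observations drive everything. \emph{Levelling:} $M$ satisfies $(\star)$ at level $t$ if and only if it does at level $r$. For if a flat $F$ of $PG(t-1,q)$ is not contained in $G_0$, then $F\setminus G_0$ is nonempty and spans $F$ whereas $F\cap T$ lies in the proper subflat $F\cap G_0$, so $r(F\setminus T)=r(F)>r(F\cap T)$ and $F$ imposes nothing; and the flats $F\subseteq G_0$ are precisely the flats of $G_0\cong PG(r-1,q)$, on which the condition is the level-$r$ one. \emph{Symmetry:} interchanging $T$ with $PG(t-1,q)\setminus T$ turns each $F\cap T$ into $F\setminus T$ and vice versa, so $M$ satisfies $(\star)$ at level $t$ if and only if its $(GF(q),t)$-complement does. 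Henceforth $M$ \emph{satisfies $(\star)$} shall mean at level $r(M)$, equivalently at every level $\ge r(M)$; the theorem then asserts that $M$ is a $GF(q)$-comatroid if and only if $M=U_{0,0}$ or $M$ satisfies $(\star)$.

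For the forward implication I would induct on the number of steps in a construction of $M$. The case $M=U_{0,0}$ is the first alternative. If $M$ is a $(GF(q),t)$-complement of a comatroid $N\ne U_{0,0}$, then $N$ satisfies $(\star)$ at level $t$ by the inductive hypothesis and levelling, hence $M$ satisfies $(\star)$ by symmetry; if $N=U_{0,0}$ then $M=PG(t-1,q)$ satisfies $(\star)$ since its only relevant flat is the rank-$0$ one. Suppose $M=M_1\oplus M_2$; we may assume both summands are nonempty (else $M$ is one of them), so each $M_i$ satisfies $(\star)$ by induction. Realize $M$ in $PG(r-1,q)$ with $E(M_i)=T_i$; then $A=\mathrm{cl}(T_1)$ and $B=\mathrm{cl}(T_2)$ are skew flats, so for any flat $F$ the restriction of $PG(r-1,q)$ to $F\cap T$ is the direct sum of its restrictions to $F\cap T_1$ and to $F\cap T_2$. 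If both of these are nonempty that restriction is disconnected; otherwise, say $F\cap T_2=\emptyset$, and then either $F\subseteq A$, where the condition at $F$ coincides with the $(\star)$-condition for $M_1$ and hence holds, or $F\not\subseteq A$, where $F\setminus A$ is nonempty, avoids $T$, and spans $F$, so that $r(F\setminus T)=r(F)>r(F\cap T)$. Thus $M$ satisfies $(\star)$.

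For the converse I would induct on $r(M)$, assuming $M\ne U_{0,0}$ satisfies $(\star)$. If $r(M)=1$ then $M=U_{1,1}$, a comatroid. If $M$ is disconnected, write $M=M_1\oplus M_2$ with both summands nonempty; the flats of $PG(r(M_i)-1,q)$ are precisely the flats of $PG(r(M)-1,q)$ contained in $\mathrm{cl}(E(M_i))$, so each $M_i$ inherits $(\star)$, and since $r(M_i)<r(M)$ each $M_i$ is a comatroid by induction, whence so is $M$. If $M$ is connected with $M=PG(r(M)-1,q)$, it is a comatroid. Otherwise $M$ is connected with $M\subsetneq PG(r(M)-1,q)$; let $N$ be the $(GF(q),r(M))$-complement of $M$, so $N\ne U_{0,0}$, $N$ satisfies $(\star)$ by symmetry, and $M$ is the $(GF(q),r(M))$-complement of $N$. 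If $r(N)<r(M)$, then $N$ is a comatroid by induction, so $M$ is. If $r(N)=r(M)$, apply the $(\star)$-condition at the flat $F=PG(r(M)-1,q)$: since $r(F\cap E(M))=r(M)=r(N)=r(F\setminus E(M))$, one of $M$, $N$ is disconnected, and as $M$ is connected it must be $N$; then $N$ is a comatroid by the disconnected case just handled (its summands have strictly smaller rank and inherit $(\star)$), so $M$ is a comatroid.

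The step I expect to be the real obstacle is controlling the complement when the rank drops, i.e. when $r(M^c)<r(M)$: one must see $M$ as the $(GF(q),r(M))$-complement of a \emph{lower-rank} comatroid, which is exactly the operation sanctioned by the third clause of the definition, and one must be confident that the levelling lemma genuinely transports $(\star)$ between levels. Everything else reduces to elementary manipulation of flats of $PG(r-1,q)$ and the fact that a restriction of $PG(r-1,q)$ whose ground set splits into two nonempty parts spanning skew flats is a nontrivial direct sum.
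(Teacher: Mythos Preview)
Your argument is correct. The converse direction is essentially the same as the paper's: both induct on $r(M)$, split into the disconnected case (where the summands inherit the hypothesis and have smaller rank), the case $r(M)=r(M^c)$ (where applying the hypothesis at the whole projective space forces $M^c$ disconnected), and the case $r(M)>r(M^c)$ (where $M^c$ inherits the hypothesis at its own level and has smaller rank). Your ``levelling'' and ``symmetry'' observations are exactly what the paper uses implicitly in the last paragraph of its proof.

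The forward direction, however, is genuinely different and more self-contained. The paper proves it by first establishing a chain of preparatory results: Lemma~\ref{closure_under_components} (components of a disconnected comatroid are comatroids), Lemma~\ref{not_comatroid_1} (if $r(M)=r(M^c)$ and both are connected then $M$ is not a comatroid), and Lemma~\ref{closure_under_flats} (restricting a comatroid to a flat gives a comatroid); these in turn rest on Proposition~\ref{sum_of_connectivities}, the quantitative bound on the sum of the vertical connectivities of the two colour classes. Only then does the paper argue that $M|(F\cap E(M))$ and its complement inside $F$ are both comatroids, whence Lemma~\ref{not_comatroid_1} forbids both being connected of full rank. You bypass all of this by inducting directly on the length of a construction sequence and checking that property $(\star)$ is preserved under complementation (your symmetry observation) and under direct sum (using that $T_1$ and $T_2$ span skew flats, so either $F\cap T$ visibly splits, or the flat reduces to one summand, or the rank condition is vacuous because $F\setminus A$ spans $F$). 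This is more elementary: it uses only the fact that a projective flat is not the union of two proper subflats, and never needs Proposition~\ref{sum_of_connectivities}. The paper's route, on the other hand, yields Lemmas~\ref{closure_under_components}--\ref{closure_under_flats} as byproducts, which it reuses later; your route proves only the theorem itself.
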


Corneil, Lerchs, and Stewart \cite{corneil} proved that a graph $G$ is a cograph if and only if $G$ does not have the $4$-vertex path as an induced subgraph. The next two theorems, which are our main results, prove matroid analogues of this theorem for binary and ternary comatroids by using the fact that a set $X$ of edges in a graph $H$ is the edge set of an induced subgraph of $H$ if and only if $X$ is a flat of $M(H)$.  The matroid 
$P(U_{3,4},U_{3,4})$, the parallel connection of two $4$-circuits, is the cycle matroid of a $6$-cycle with a single chord where this chord lies in two $4$-cycles.


\begin{theorem}
\label{main_binary}
A binary matroid $M$ is a   binary  comatroid if and only if neither $M$ nor $M^c$ has a  flat isomorphic to   a circuit of size exceeding five, to $P(U_{3,4},U_{3,4})$, or to the cycle matroid   of one of the six $5$-vertex graphs shown in Figure~\ref{main_M_2}.
\end{theorem}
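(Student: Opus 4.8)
The plan is to derive Theorem~\ref{main_binary} from a classification of the \emph{flat-minimal} binary non-comatroids, that is, the binary non-comatroids all of whose proper flats are comatroids; this classification is the real content, and both directions of the stated theorem then follow from it together with the facts that being a comatroid is inherited by flats and preserved under complementation.

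\emph{Necessity.} First I would check that each matroid $N$ on the excluded list fails to be a binary comatroid. Embedding $N$ spanningly in $PG(r(N)-1,2)$ and taking $F$ to be the whole geometry in Theorem~\ref{equiv}, it is enough to verify that both $PG(r(N)-1,2)|E(N)=N$ and its complement $PG(r(N)-1,2)\setminus E(N)$ are connected and of rank $r(N)$. For a circuit $U_{n-1,n}$ with $n\ge 6$ this is a point count (the complement of an $n$-point circuit in $PG(n-2,2)$ spans and is connected once $n\ge 5$), and for $P(U_{3,4},U_{3,4})$ and the six cycle matroids of Figure~\ref{main_M_2} it is a short computation, their complements being again small or, in one case, $U_{4,5}$. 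Since flats of a comatroid are comatroids and the comatroid class is closed under complementation, if $M$ is a binary comatroid then so is every flat of $M$ and of $M^c$; hence neither $M$ nor $M^c$ can have a flat isomorphic to a list member.

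\emph{Sufficiency.} Arguing contrapositively, suppose $M$ is not a binary comatroid, and choose a flat $F$ of $M$ of minimum rank for which $M|F$ is not a comatroid; then $M|F$ is a flat-minimal non-comatroid. Two lemmas close the argument. First, the class of flat-minimal non-comatroids is closed under complementation: using that the $(GF(q),t)$-complement of a comatroid is a comatroid (rule (iii) of the definition), one shows every proper flat of $N^c$ is a comatroid whenever every proper flat of $N$ is. Second, for any flat $F$ of $M$, writing $\bar F$ for the closure of $F$ in $PG(r(M)-1,2)$, the set $\bar F\setminus F=\bar F\cap E(M^c)$ induces a flat of $M^c$ isomorphic to $(M|F)^c$. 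Granting the classification --- that up to complementation the flat-minimal binary non-comatroids are exactly the circuits of size at least six, $P(U_{3,4},U_{3,4})$, and the six cycle matroids of Figure~\ref{main_M_2} --- either $M|F$ is itself on the list, so $M$ has a forbidden flat, or $(M|F)^c$ is on the list, so by the second lemma $M^c$ has a forbidden flat; either way the hypothesis of the theorem fails.

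\emph{The classification.} Let $M$ be a flat-minimal binary non-comatroid with $r(M)=r$, and let $F$ be a flat of $PG(r-1,2)$ witnessing via Theorem~\ref{equiv} that $M$ is not a comatroid. One first shows $F=PG(r-1,2)$: if $r(F)<r$ and $F\cap E(M)$ and $F\setminus E(M)$ both span $F$, then applying Theorem~\ref{equiv} to the comatroid $M|(F\cap E(M))$ with the full flat of $PG(r-1,2)|F$ yields a contradiction, while if one of them fails to span $F$ then $F$ is covered by two proper subspaces of $PG(r-1,2)|F$, which a point count forbids. Hence $M$ and $M^c$ are connected of rank $r$ and every proper flat of each is a comatroid. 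For $r\le 5$ this, together with the constraint that proper flats must avoid the previously identified smaller forbidden matroids, reduces to a bounded case analysis producing the six rank-$4$ matroids, $P(U_{3,4},U_{3,4})$, $U_{5,6}$, and their complements. For $r\ge 6$ one must show that such an $M$ is a circuit or the complement of one; I expect this to be the main obstacle. The natural line of attack is to analyse, for each hyperplane $H$ of $PG(r-1,2)$, the restrictions $M|(H\cap E(M))$ and $M^c|(H\cap E(M^c))$, both of which are comatroids, and to argue that the only bipartitions of $PG(r-1,2)$ into two spanning connected sets compatible with comatroid behaviour in every hyperplane are those in which one side is a spanning circuit.
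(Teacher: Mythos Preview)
Your high-level architecture matches the paper's exactly: reduce both directions to a classification of the flat-minimal binary non-comatroids $\mathcal{M}_2$, verify the list members are non-comatroids for necessity, and for sufficiency pass to a minimal bad flat and invoke the classification. The lemmas you isolate (closure under complementation of $\mathcal{M}_2$, and that $(M|F)^c$ arises as a flat of $M^c$) are correct and used implicitly in the paper as well.

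The gap is that the classification itself is where essentially all the work lies, and your sketch does not supply the ideas that carry it. Two specific points. First, ``bounded case analysis'' for $r\le 5$ drastically understates the rank-$5$ case: the paper needs, beyond $3$-connectedness, to show $M$ is internally $4$-connected with no triads, then invoke the Mayhew--Royle--Whittle structure theorem for internally $4$-connected binary matroids with no $M(K_{3,3})$-minor, dispose of each sporadic matroid and the M\"obius matroids by hand, and finally run a SageMath search over extensions of $M(K_{3,3})$ using a hyperplane-counting bound. None of this is routine. Second, for $r\ge 6$ your ``analyse restrictions to every hyperplane'' is in the right spirit but misses the mechanism: the paper colours each projective hyperplane and each rank-$(r-2)$ flat red or green, sets up a bipartite incidence graph, bounds the number of cross-coloured edges from above (each hyperplane contains at most one wrongly-coloured corank-$2$ flat) and below (each mixed hyperplane pair contributes), and derives $\tfrac12|G_1||R_1|\le |G_1|+|R_1|$, forcing $\min(|G_1|,|R_1|)\le 3$. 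The contradiction then comes from the McNulty--Wu theorem that a simple connected binary matroid with no small cocircuits has at least four connected hyperplanes; the connected-hyperplane results of Section~\ref{ch} are the engine here and your outline does not mention them.
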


\begin{figure}[h]
\center
\input{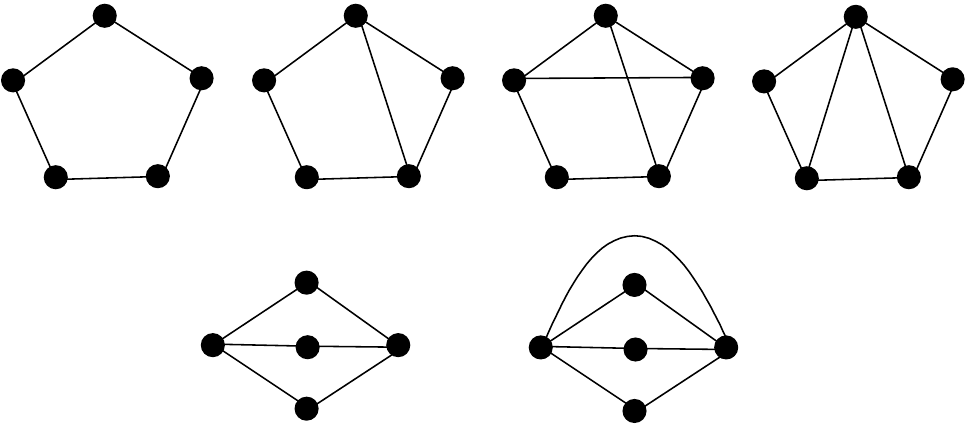_t}
\caption{The cycle matroids of these graphs are induced-restriction-minimal binary non-comatroids.}
\label{main_M_2}
\end{figure}

\begin{theorem}
\label{main_ternary}
A ternary matroid $M$ is a   ternary  comatroid if and only if neither $M$ nor $M^c$ has a  flat isomorphic to a circuit of size exceeding three, to  a matroid that can be obtained from a circuit of size at least three by $2$-summing a copy of $U_{2,4}$ to at least one of the elements of the circuit, or to one of the  five rank-$3$ matroids   $P(U_{2,3},U_{2,3}),  U_{2,4} \oplus_2 U_{2,4}, P(U_{2,4},U_{2,3}),$  $M(K_4),$ and  $\mathcal{W}^3$.
\end{theorem}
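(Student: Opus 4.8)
The plan is to follow the template of the proof of Theorem~\ref{main_binary}. Write $\mathcal{F}$ for the family of matroids listed in the statement. The ``only if'' direction is the contrapositive of: if some flat of $M$ or of $M^c$ is isomorphic to a member of $\mathcal{F}$, then $M$ is not a ternary comatroid. Since, as shown earlier for ternary matroids, every flat of a ternary comatroid is a ternary comatroid and every $(GF(3),t)$-complement of a ternary comatroid is a ternary comatroid, this reduces to checking that no member of $\mathcal{F}$ is a ternary comatroid. For each circuit $N$ of size exceeding three, and each $N$ obtained from a circuit of size at least three by $2$-summing copies of $U_{2,4}$, one verifies that $N$ is connected and that its $(GF(3),r(N))$-complement spans $PG(r(N)-1,3)$ and is connected, so that the flat $F=PG(r(N)-1,3)$ of $PG(r(N)-1,3)$ has $r(F\cap E(N))=r(F-E(N))$ with neither restriction disconnected, violating the criterion of Theorem~\ref{equiv}; for the five exceptional rank-$3$ matroids this is a finite check inside $PG(2,3)$.

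For the ``if'' direction, the key tool is the following transfer principle. If $G$ is a flat of a ternary matroid $M$, then $\operatorname{cl}_{PG(r(M)-1,3)}(G)$ is a flat of $PG(r(M)-1,3)$ that meets $E(M)$ in exactly $G$, so $\operatorname{cl}_{PG(r(M)-1,3)}(G)\setminus G$ is simultaneously a copy of the $(GF(3),r(G))$-complement of $M|G$ and a flat of $M^c$. Hence if $M|G$ has a flat isomorphic to a member of $\mathcal{F}$, then so does $M$, and if $(M|G)^c$ has such a flat, then so does $M^c$. Arguing by induction on $|E(M)|$, we may therefore assume that $M$ is a ternary non-comatroid all of whose proper flats are ternary comatroids, and it remains to show that $M$ or $M^c$ is isomorphic to a member of $\mathcal{F}$. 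Such an $M$ is connected: a separator of $M$ is a flat, so if $M$ were disconnected its components would be proper flats and hence ternary comatroids, whence $M$ itself would be a ternary comatroid.

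These induced-restriction-minimal ternary non-comatroids are classified by rank. The simple ternary matroids of rank at most $2$ are $U_{0,0}$, $U_{1,1}$, $U_{2,2}$, $U_{2,3}$ and $U_{2,4}=PG(1,3)$, all of which are ternary comatroids, so none arises. In rank $3$ every proper flat is one of these and is therefore already a comatroid, so every rank-$3$ ternary non-comatroid is induced-restriction-minimal; moreover, in Theorem~\ref{equiv} a flat-line of $PG(2,3)$ never contributes an obstruction, because whenever it meets $E(M)$ and its complement in sets of equal rank these are two copies of $U_{2,2}$, so the criterion reduces to: a rank-$3$ ternary matroid $M$ is a non-comatroid precisely when $M^c$ spans $PG(2,3)$ and both $M$ and $M^c$ are connected. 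A finite enumeration of the subsets of $PG(2,3)$ with this property shows that, up to isomorphism and $(GF(3),3)$-complementation, the rank-$3$ induced-restriction-minimal ternary non-comatroids are $U_{3,4}$, the $2$-sum of $U_{2,3}$ with a single copy of $U_{2,4}$, $P(U_{2,3},U_{2,3})$, $U_{2,4}\oplus_2 U_{2,4}$, $P(U_{2,4},U_{2,3})$, $M(K_4)$ and $\mathcal{W}^3$: the first is the smallest circuit in the first family, the second is the smallest member of the second family, and the remaining five are the listed exceptions.

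The rank-$r$ case with $r\ge 4$ is the crux and the expected main obstacle. Beginning with a flat $F$ of $PG(r-1,3)$ that witnesses, via Theorem~\ref{equiv}, that $M$ is not a comatroid, one uses the fact that $M|(F\cap E(M))$ is a proper flat of $M$, hence a comatroid to which Theorem~\ref{equiv} again applies, to constrain $F$ together with the two sides $F\cap E(M)$ and $F-E(M)$. One then shows that, after possibly replacing $M$ by $M^c$, the matroid $M$ has a $2$-separation, so $M=M_1\oplus_2 M_2$, and re-runs the analysis on a $2$-sum decomposition of $M$. The essential difficulty is that $M_1$ and $M_2$ are not flats of $M$, so the inductive hypothesis does not apply to them directly; instead one must control how the criterion of Theorem~\ref{equiv} and the presence of a forbidden flat propagate across $2$-sums, which forces each piece of the decomposition to be a triangle or a copy of $U_{2,4}$ and the triangles to be arranged in a path. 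This identifies $M$ as a circuit of size at least five or as a circuit of size at least three with copies of $U_{2,4}$ $2$-summed onto some of its elements. Together with the rank-at-most-$3$ analysis, this completes the classification of the induced-restriction-minimal ternary non-comatroids and, with the reductions above, the theorem. The extra length over the binary argument comes from the larger number of rank-$3$ configurations available in $PG(2,3)$ and from the $U_{2,4}$-attachments, which have no binary analogue.
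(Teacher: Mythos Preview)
Your reduction to classifying the induced-restriction-minimal ternary non-comatroids is correct, as is the rank-at-most-$3$ analysis. The genuine gap is in the rank $\ge 4$ case, specifically the sentence ``One then shows that, after possibly replacing $M$ by $M^c$, the matroid $M$ has a $2$-separation.'' You give no mechanism for this, and it is the entire content of the argument. Note first that your setup is confused: for an induced-restriction-minimal $M$, the witnessing flat $F$ in Theorem~\ref{equiv} cannot be proper (if it were, $M|(F\cap E(M))$ would be a comatroid whose complement in $F$ is connected of the same rank, contradicting Lemma~\ref{not_comatroid_1}), so $F=PG(r-1,3)$ and $F\cap E(M)=E(M)$ is not a proper flat. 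What you actually know is only that both $M$ and $M^c$ are connected of rank $r$; from this alone there is no obvious reason either should fail to be $3$-connected.

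The paper's route is quite different and relies on machinery you do not invoke. It proves (Theorem~\ref{main_theorem_rank_bound2} via Lemma~\ref{rank_four_ternary}) that $M$ or $M^c$ has a cocircuit of size less than four, and this is where the real work lies. The key ingredients are: a ternary analogue of the Kelmans--Seymour connected-hyperplane theorem (Theorem~\ref{maintern}, Corollary~\ref{ternturn}), which guarantees connected hyperplanes in the absence of small cocircuits; and, for $r\ge 5$, a double-counting argument on a bipartite graph whose vertices are the red and green projective flats of ranks $r-1$ and $r-2$, which forces one colour class of hyperplanes to be too small to be compatible with Corollary~\ref{ternturn}. The rank-$4$ case is a separate, delicate argument inside $PG(3,3)$. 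Once the small cocircuit is in hand, the structure is extracted not by a general $2$-sum analysis but via free elements (Lemma~\ref{freedom}): one shows (Lemma~\ref{ternary_exceptional_case}) that any non-spanning circuit of rank $\ge 3$ meeting the small cocircuit would produce a bad proper flat, forcing the basepoint to be free and hence $M$ to be a circuit with copies of $U_{2,4}$ attached. Your proposed $2$-sum propagation argument does not capture this and, as written, does not constitute a proof.
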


The proofs of these theorems are given in Sections~\ref{bin} and \ref{tern}, respectively. In Section~\ref{prelim}, we prove a number of preliminary results including Theorem~\ref{equiv}. In particular, we show that if we contract an element from a $GF(q)$-comatroid and simplify the resulting matroid, then we get another $GF(q)$-comatroid. 
A simple matroid $N$ is an \textbf{induced minor} of a simple matroid $M$ if  $N$ can be obtained from $M$ by a sequence of operations each of which consists of restricting to a flat, or contracting an element and then simplifying. As consequences of Theorems~\ref{main_binary} and \ref{main_ternary}, we have the following characterizations of binary and ternary comatroids in terms of forbidden induced minors.

\begin{corollary}
\label{cor_binary}
A binary matroid   is a  binary comatroid if and only if   it  has no    induced minor  isomorphic to   the complement of a circuit of size exceeding five, to $P(U_{3,4},U_{3,4})$, to  the cycle matroid  of one of the six $5$-vertex graphs  in Figure~\ref{main_M_2}, or to the  complements of these cycle matroids in $PG(3,2)$.
\end{corollary}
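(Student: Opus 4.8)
The plan is to derive Corollary~\ref{cor_binary} from Theorem~\ref{main_binary} together with the fact that the class of binary comatroids is closed under taking induced minors. For the forward implication, this closure is immediate from the preliminary results: every proper flat of a binary comatroid is a binary comatroid (and a matroid is a flat, and a comatroid, of itself), and contracting an element of a binary comatroid and simplifying yields a binary comatroid. Each matroid in the list of the corollary is isomorphic either to one of the matroids $B$ that arise as forbidden flats in Theorem~\ref{main_binary} --- a circuit of size exceeding five, $P(U_{3,4},U_{3,4})$, or the cycle matroid of one of the six graphs $G_1,\dots,G_6$ in Figure~\ref{main_M_2} --- or to the complement $B^c$ of such a $B$. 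In the first case $B$ is a forbidden flat of $B$, so $B$ is not a binary comatroid; in the second the relevant ranks are preserved, so $(B^c)^c=B$ is a forbidden flat of $(B^c)^c$ and $B^c$ is not a binary comatroid. Hence no member of the list is a binary comatroid, and, binary comatroids being closed under induced minors, a binary comatroid has no induced minor isomorphic to a member of the list.

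For the converse, I would argue by minimal counterexample. Let $M$ be a binary matroid that is not a binary comatroid, and among the induced minors of $M$ that are not binary comatroids choose $N$ with $|E(N)|$ least; then every proper flat of $N$, and every matroid obtained from $N$ by contracting an element and simplifying, is a binary comatroid, being a proper induced minor of $N$. Two auxiliary facts are needed. First, $M$ is a binary comatroid if and only if $M^c$ is: $M^c$ is the $(GF(2),r(M))$-complement of $M$, and conversely $M$ is the $(GF(2),r(M))$-complement of $M^c$, so this follows from clause~(iii) of the definition of a $GF(2)$-comatroid; in particular each $B$ and each $B^c$ above is a binary non-comatroid. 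Second, a complement of a flat of $M^c$ is a flat of $M$: if $F'$ is a flat of $M^c=PG(r(M)-1,2)\ba E(M)$ and $\widehat F$ is the flat of $PG(r(M)-1,2)$ spanned by $F'$, then inside the subgeometry on $\widehat F$ the sets $\widehat F\cap E(M)$ and $\widehat F\cap E(M^c)$ are complementary, so $M^c|F'$ is a complement of the flat $M|(\widehat F\cap E(M))$ of $M$; since complements of comatroids are comatroids, whenever $M^c$ has a non-comatroid flat so does $M$.

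Applying Theorem~\ref{main_binary} to $N$, one of $N$ or $N^c$ has a flat isomorphic to a forbidden-flat matroid $B$. If it is $N$, then, since this flat is a non-comatroid whereas every proper flat of $N$ is a comatroid, the flat is improper, so $N\cong B$. If it is $N^c$, the second fact produces a non-comatroid flat of $N$, which by the same argument is all of $N$, and a rank computation gives $N\cong B^c$. Thus $N\cong B$ or $N\cong B^c$ for a forbidden-flat matroid $B$, and it remains to discard the two possibilities absent from the corollary's list. If $N\cong B$ with $B$ a circuit of size $n>5$, then contracting $n-5$ of its elements and simplifying gives $U_{4,5}$, the cycle matroid of the $5$-cycle, which is one of the graphs in Figure~\ref{main_M_2} and so not a binary comatroid --- contradicting the minimality of $N$; hence this case does not occur. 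Similarly, a direct computation in $PG(4,2)$ shows that $P(U_{3,4},U_{3,4})^c$ has a proper induced minor isomorphic to one of the cycle matroids $M(G_i)$ or to one of their complements, so $N\not\cong P(U_{3,4},U_{3,4})^c$. The remaining possibilities for $N$ are exactly the matroids in the corollary, so $M$ has an induced minor isomorphic to one of them.

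The main obstacle is the last paragraph, and in particular the finite verification that $P(U_{3,4},U_{3,4})^c$ --- the only item on the Theorem~\ref{main_binary} list that is neither on the corollary's list nor (like a circuit of size exceeding five) reducible to $M(C_5)$ --- does have a strictly smaller non-comatroid induced minor from the list; one must also be careful with ranks each time a complement is taken, to be sure the complement is an involution in the cases that arise.
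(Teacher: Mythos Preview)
Your overall strategy---take an induced-minor-minimal non-comatroid $N$, locate it inside $\mathcal{M}_2$, and then eliminate the members of $\mathcal{M}_2$ that are absent from the corollary's list---is a sensible dual of the paper's approach. The paper does the complementary thing: rather than showing that the omitted $\mathcal{M}_2$-members are \emph{not} induced-minor minimal, it verifies directly that the rank-$\ge 5$ items that \emph{are} on the list (complements of circuits, and the $P(U_{3,4},U_{3,4})$ item) are induced-minor minimal, by checking that each single-element contraction has a complement of rank at most two and is therefore a comatroid. Your elimination of circuits is correct: contracting elements of $U_{r,r+1}$ with $r\ge 5$ eventually produces $M(C_5)$, a rank-$4$ non-comatroid, contradicting minimality.

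The genuine gap is your treatment of $(P(U_{3,4},U_{3,4}))^c$. You assert that ``a direct computation in $PG(4,2)$'' exhibits a proper non-comatroid induced minor, but you do not carry it out---and in fact the paper's computation shows the opposite. Writing $P(U_{3,4},U_{3,4})=C_1\cup C_2$ with basepoint $p$, and letting $\Pi_i$ be the projective plane spanned by $C_i$, one checks that for green $e$ the lines through $e$ containing two red points are: at most two (lying in $\Pi_i$) when $e\in\Pi_i$, and at most one (of the form $\{e,c_1,c_2\}$ with $c_i\in C_i$) when $e\notin\Pi_1\cup\Pi_2$. Hence $\bigl(\mathrm{si}((P(U_{3,4},U_{3,4}))^c/e)\bigr)^c$ always has at most two points, so every contraction is a comatroid; together with membership in $\mathcal{M}_2$ this makes $(P(U_{3,4},U_{3,4}))^c$ induced-minor \emph{minimal}. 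Your claimed computation therefore cannot succeed, and the argument collapses exactly where you flagged the ``main obstacle.'' To repair the proof you should follow the paper and verify minimality of the listed rank-$\ge 5$ matroids directly, rather than attempting to eliminate $(P(U_{3,4},U_{3,4}))^c$.
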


\begin{corollary}
\label{cor_ternary}
A ternary matroid   is a  ternary comatroid if and only if  it  has no induced minor isomorphic to any of the following: the complements of  all matroids that can be obtained from a circuit of size at least three by $2$-summing a copy of $U_{2,4}$ to some, possibly empty, set of elements of the circuit;  the    matroids,   $U_{3,4},$ $P(U_{2,3},U_{2,3}),$ $U_{2,4} \oplus_2 U_{2,3},$ $U_{2,4} \oplus_2 U_{2,4},$  $P(U_{2,4},U_{2,3}),$  $M(K_4),$ and $\mathcal{W}^3 $; 
or the complements of these  matroids in $PG(2,3)$.
\end{corollary}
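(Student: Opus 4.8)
The plan is to deduce this from Theorem~\ref{main_ternary} using two facts established in Section~\ref{prelim}: a proper flat of a ternary comatroid is a ternary comatroid, and simplifying the contraction of an element of a ternary comatroid is again a ternary comatroid. Together these show that the class of ternary comatroids is closed under taking induced minors, and this is the heart of the ``only if'' direction: it suffices to check that none of the matroids in~(a)--(c) is a ternary comatroid, for then no ternary comatroid can have any of them as an induced minor. First I would note that every matroid in~(a)--(c) is ternary, since $U_{2,4}\cong PG(1,3)$ is ternary and $2$-sums, parallel connections, and $(GF(3),k)$-complements of ternary matroids are ternary. Each of the seven matroids in~(b) appears on the forbidden-flat list of Theorem~\ref{main_ternary}, hence has a forbidden flat (itself), and so is not a comatroid. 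For a complement $X^c$ occurring in~(a) or~(c), I would check by a short counting argument that $X^c$ spans $PG(r(X)-1,3)$: a hyperplane of $PG(r(X)-1,3)$ misses exactly $3^{\,r(X)-1}$ points, and each relevant $X$ has strictly fewer than $3^{\,r(X)-1}$ elements, so $X^c$ cannot lie in a hyperplane and therefore $(X^c)^c=X$. Since $X$ is itself a forbidden flat, $(X^c)^c$ has a forbidden flat, and Theorem~\ref{main_ternary} then shows $X^c$ is not a comatroid.

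For the ``if'' direction I would argue by contraposition. Assume $M$ is a ternary matroid that is not a comatroid; by Theorem~\ref{main_ternary}, some flat $F$ of $M$, or of $M^c$, satisfies $M|F\cong B$, respectively $M^c|F\cong B$, for a matroid $B$ on the forbidden-flat list. Suppose first that $F$ is a flat of $M$, so $M|F\cong B$ is already an induced minor of $M$; I would then reduce $B$ to a matroid in~(b). If $B$ is a circuit of size $n\ge 4$, then $n-4$ successive contract-and-simplify steps --- each replacing $U_{m-1,m}$ by the already-simple $U_{m-2,m-1}$ --- turn $B$ into $U_{3,4}$. If $B$ is obtained from a circuit $C$ of size $n\ge 3$ by $2$-summing copies of $U_{2,4}$ at a nonempty set of elements of $C$, then contracting a non-basepoint element of an attached copy and simplifying deletes that copy, because $U_{2,4}/e\cong U_{1,3}$ collapses into the rest of the matroid under simplification; after deleting all but one copy this way and then contracting circuit elements carrying no copy until $C$ is a triangle, one reaches $U_{2,3}\oplus_{2}U_{2,4}=U_{2,4}\oplus_{2}U_{2,3}$. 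And if $B$ is one of the five rank-$3$ matroids, it already lies in~(b).

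Suppose instead that $F$ is a flat of $M^c$. Here I would convert $F$ into a flat of $M$ by a complement computation. Write $M\cong PG(r-1,3)|T$, so $M^c=PG(r-1,3)\ba T$, and put $G=\operatorname{cl}_{PG(r-1,3)}(F)$. Since $F$ is a flat of $M^c$, one has $F=G\cap E(M^c)$ and $r(G)=r(B)$, whence $PG(r-1,3)|G\cong PG(r(B)-1,3)$, and inside this projective geometry the sets $F$ and $G\cap T$ partition the point set $G$. Moreover $G\cap T$ is a flat of $M$, because $\operatorname{cl}_{PG(r-1,3)}(G\cap T)\subseteq G$. Hence $M|(G\cap T)$ is precisely the complement of $B$ inside $PG(r(B)-1,3)$; that is, $M|(G\cap T)\cong B^c$, which is an induced minor of $M$. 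Now $B^c$ is one of the complements listed in~(a) when $B$ is a circuit of size exceeding three or a circuit $2$-summed with copies of $U_{2,4}$, and one of the complements listed in~(c) when $B$ is one of the five rank-$3$ matroids. In every case $M$ has an induced minor belonging to~(a)--(c), completing the contrapositive. Corollary~\ref{cor_binary} would be proved the same way, starting from Theorem~\ref{main_binary}.

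The argument is not conceptually deep, but two points require care. In the ``if'' direction I must check precisely how a $2$-summed copy of $U_{2,4}$, and a circuit, behave under a single contract-and-simplify step, so as to be certain the reductions terminate exactly at $U_{3,4}$ and $U_{2,4}\oplus_{2}U_{2,3}$ and do not collapse past them. In the ``only if'' direction the delicate point is the identity $(X^c)^c=X$: one must verify the spanning estimate for every matroid $X$ appearing in~(a) and~(c), and one must take the complement inside $PG(r(B)-1,3)$ --- the projective geometry whose rank equals that of the forbidden flat $B$ --- rather than inside $PG(r(M)-1,3)$.
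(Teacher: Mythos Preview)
Your argument is correct and gives a complete proof of the corollary, but it proceeds along a noticeably different route from the paper's.

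The paper's proof aims to show that the list is exactly the set of induced-minor-minimal ternary non-comatroids. Since every ternary matroid of rank at most two is a comatroid (Lemma~\ref{comat_small_rank}), the rank-$3$ items in (b) and (c) are automatically induced-minor-minimal; the paper then checks the one remaining case directly: when $M^c$ is a circuit with some (possibly empty) set of attached copies of $U_{2,4}$, it shows that ${\rm si}(M/e)$ has at most one point for every $e$, so every proper induced minor of such an $M$ is a comatroid. Thus the paper establishes minimality rather than merely non-comatroidness, and never needs the explicit contraction reductions you perform.

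Your approach instead verifies the biconditional directly. For the ``only if'' direction you show each listed matroid is a non-comatroid, using the counting inequality $|X|<3^{\,r(X)-1}$ to force $r(X^c)=r(X)$ and hence $(X^c)^c=X$; this is a clean alternative to the paper's minimality check (and could be shortened further by observing that $M$ is a comatroid if and only if $M^c$ is, since the $(GF(3),r(M))$-complement is itself one of the defining operations). For the ``if'' direction you handle the case where $M$ itself has the forbidden flat by explicitly contracting it down to $U_{3,4}$ or to $U_{2,4}\oplus_2 U_{2,3}$, which the paper leaves implicit. Your conversion of a forbidden flat of $M^c$ into a flat of $M$ isomorphic to $B^c$ is the same manoeuvre that underlies the paper's claim ``it suffices.''

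Both approaches work. The paper's is shorter and yields the stronger conclusion that the list is irredundant; yours is more self-contained and makes the reduction via contraction visible. One small caution: your counting bound $|X|<3^{\,r(X)-1}$ fails when $X=U_{2,3}$ (a triangle with no $U_{2,4}$ attached), so you should note that this degenerate case is excluded since $U_{2,3}$ is a comatroid and thus does not arise as a forbidden $B$ in Theorem~\ref{main_ternary}.
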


\section{Preliminaries}
\label{prelim}

 Throughout the paper, we call cocircuits, flats, and hyperplanes of $PG(r-1,q)$ \textbf{projective cocircuits}, \textbf{projective flats},  and \textbf{projective hyperplanes}, respectively.
 Viewing a $GF(q)$-representable matroid $M$ as a restriction of $PG(r(M)-1, q)$, we color the elements of $E(M)$ green while assigning the color red to the elements of $PG(r(M)-1,q)$ not in $E(M)$. We will frequently use $G$ and $R$ to denote both the sets of green and red elements  and the matroids obtained by restricting $PG(r(M)-1,q)$  to these sets of elements. The next lemma is an immediate consequence of the fact that the elements of a projective geometry are not all contained in two hyperplanes.
 
 \begin{lemma}
 \label{cover}
 For an arbitrary prime power $q$, let $(G,R)$ be a $2$-coloring of $PG(r-1,q)$. Then $r(G) = r$ or $r(R) = r$.
 \end{lemma}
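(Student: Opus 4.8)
The plan is to argue by contradiction, leveraging the standard fact about projective geometries that the hyperplanes cannot "cover" the whole point set with only two of them — more precisely, that no point of $PG(r-1,q)$ lies outside both of two given hyperplanes is impossible once $r \geq 1$, and more to the point, that two proper flats cannot have union equal to $E(PG(r-1,q))$. First I would observe that since $G \cup R = E(PG(r-1,q))$ and this set spans $PG(r-1,q)$, we have $r(G \cup R) = r$. Suppose for contradiction that $r(G) \leq r-1$ and $r(R) \leq r-1$. Then the closures $\mathrm{cl}(G)$ and $\mathrm{cl}(R)$ are both proper flats of $PG(r-1,q)$, hence each is contained in some projective hyperplane; call these hyperplanes $H_1$ and $H_2$. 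Then $E(PG(r-1,q)) = G \cup R \subseteq H_1 \cup H_2$, so every point of the projective geometry lies in $H_1$ or $H_2$.

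The key step is then to derive a contradiction from $PG(r-1,q) = H_1 \cup H_2$. Here I would use a counting or spanning argument: a hyperplane of $PG(r-1,q)$ is a copy of $PG(r-2,q)$ and has $(q^{r-1}-1)/(q-1)$ points, whereas $PG(r-1,q)$ has $(q^r-1)/(q-1)$ points. Two hyperplanes meet in a flat of rank at most $r-2$, i.e.\ in at most $(q^{r-2}-1)/(q-1)$ points (with equality when $H_1 \neq H_2$). Inclusion–exclusion then gives $|H_1 \cup H_2| \leq 2\cdot\frac{q^{r-1}-1}{q-1} - \frac{q^{r-2}-1}{q-1} < \frac{q^r-1}{q-1} = |PG(r-1,q)|$ for all $q \geq 2$ and $r \geq 2$; the cases $r \in \{0,1\}$ are trivial since then there is no proper nonempty flat to worry about (if $r=0$ the statement is vacuous as $r(G)=r(R)=0=r$, and if $r=1$ any nonempty colour class already has rank $1$, while if both classes are empty then $PG(0,q)$ is empty, again forcing $r(G)=r(R)=0=r$). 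This inequality contradicts $PG(r-1,q) = H_1 \cup H_2$, completing the proof. Alternatively, and perhaps more cleanly, one can avoid counting entirely: pick $x \in H_1 \setminus H_2$ and $y \in H_2 \setminus H_1$ (these exist since the hyperplanes are distinct and neither contains the other), and then the line through $x$ and $y$ in $PG(r-1,q)$ contains a third point $z$ (as $q \geq 2$); since $z \notin H_1$ would follow from $z \in H_1 \Rightarrow y = $ (combination of $x,z$) $\in H_1$, a contradiction, and symmetrically $z \notin H_2$, we get $z \notin H_1 \cup H_2$, the desired contradiction.

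The main obstacle, such as it is, is simply bookkeeping the degenerate small-rank cases correctly and choosing the cleanest contradiction mechanism; the line-through-two-points argument is the most transparent and avoids any dependence on the prime power $q$ beyond $q \geq 2$, which matches the hypothesis that $q$ is an arbitrary prime power. I would therefore write the proof using that third-point argument, relegating the rank-$0$ and rank-$1$ checks to a single introductory sentence.
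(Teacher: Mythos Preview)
Your proposal is correct and takes essentially the same approach as the paper: the paper simply asserts the lemma as ``an immediate consequence of the fact that the elements of a projective geometry are not all contained in two hyperplanes,'' and your argument fills in exactly that fact, via either the counting or the third-point-on-a-line method.
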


\begin{proposition}
\label{sum_of_connectivities}
For an arbitrary prime power $q$, let $(G,R)$ be a $2$-coloring of $PG(r-1,q)$. Let $j$ and $k$ be the vertical connectivities of $G$ and $R$, respectively. Then 
$j+k \geq r$ unless $(q,r) = (2,3)$ and $\{G,R\} = \{U_{3,3}, U_{2,3} \oplus U_{1,1}\}$.
\end{proposition}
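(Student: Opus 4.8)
The plan is to bound the vertical connectivities $j$ and $k$ of $G$ and $R$ from below by analyzing how these two complementary sets sit inside the projective hyperplanes of $PG(r-1,q)$. Recall that the vertical connectivity of a matroid $N$ is the least integer $\ell$ such that $N$ has a vertical $\ell$-separation, i.e., a partition $(A,B)$ with $r(A)+r(B)-r(N) < \ell$ and $\min\{r(A),r(B)\} \geq \ell$; if no such separation exists we take the vertical connectivity to be $r(N)$ (with the usual conventions for small matroids). First I would dispose of the degenerate cases where $r(G)$ or $r(R)$ is small: by Lemma~\ref{cover}, at least one of $G,R$ has rank $r$, say $r(G)=r$; if $r(R) \le 1$ then $k \ge r(R)$ by convention and $j = r$ when $G$ spans and is connected enough, and one checks the inequality $j+k\ge r$ directly (or finds the listed exception). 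So the main case is $r(G)=r(R)=r$.

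Next, the core argument. Suppose $G$ has a vertical $j$-separation $(A_1,A_2)$, so $r(A_1)+r(A_2) \le r+j-1$ with $r(A_i) \ge j$; I want to produce a vertical separation of $R$ of order roughly $r-j$, which would give $k \le r-j$... no: I want a lower bound, so I argue contrapositively. Assume $j+k \le r-1$. Pick a vertical $j$-separation $(A_1,A_2)$ of $G$ and a vertical $k$-separation $(B_1,B_2)$ of $R$. Let $F_i = \mathrm{cl}_{PG(r-1,q)}(A_i)$ and $H_i = \mathrm{cl}_{PG(r-1,q)}(B_i)$ be the projective flats spanned by these pieces; so $r(F_1)+r(F_2) \le r+j-1$ and $r(H_1)+r(H_2) \le r+k-1$. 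The key observation is that $G \subseteq F_1 \cup F_2$ and $R \subseteq H_1 \cup H_2$, hence $PG(r-1,q) = E(G)\cup E(R) \subseteq F_1\cup F_2\cup H_1\cup H_2$, so the whole projective geometry is covered by four proper flats whose ranks sum to at most $2r+j+k-2 \le 3r-3$. I would then invoke a covering bound for projective geometries — a set of flats covering $PG(r-1,q)$ must have ranks summing to at least $2r-1$ when there are two of them, and a corresponding bound for four flats — to force $j+k$ up. More precisely, the sharp tool here is that if flats $F_1,\dots,F_m$ cover $PG(r-1,q)$, the complements $PG(r-1,q)\setminus F_i$ are "large," and a counting/rank argument (or induction on $r$, projecting from a point outside as many of the $F_i$ as possible) yields $\sum (r - r(F_i)) \le$ something forcing $r(F_1)+r(F_2) + (r(H_1)+r(H_2)) \ge 3r$ unless $q=2$ and $r$ is tiny. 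Matching this against $2r+j+k-2$ gives $j+k \ge r+2 - (\text{small correction})$, and tracking the correction term is exactly where $q=2$, $r=3$ sneaks in.

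I expect the main obstacle to be making the four-flat covering bound precise and sharp enough to isolate \emph{exactly} the exception $(q,r)=(2,3)$ with $\{G,R\}=\{U_{3,3},U_{2,3}\oplus U_{1,1}\}$, rather than a slightly weaker inequality that leaves several spurious cases. The cleanest route is probably induction on $r$: choose a point $p$ of $PG(r-1,q)$ and contract it; the images of $G$ and $R$ 2-color $PG(r-2,q)$, their vertical connectivities drop by at most one each (one must check this carefully, as simplification after contraction can behave subtly — but Lemma~\ref{cover} and the connectivity facts from the preliminaries should control it), so by induction the contracted connectivities sum to at least $r-1$, giving $j+k \ge r-1$; then one extra argument rules out equality except in the stated small case, by examining when contracting \emph{every} point forces a drop of one in both colors simultaneously — this pins down $r=3$, $q=2$ and, by a short finite check of the 2-colorings of $PG(2,2)$ (the Fano plane), the pair $\{U_{3,3}, U_{2,3}\oplus U_{1,1}\}$ (three non-collinear points versus a line plus a point). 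The base cases $r \le 2$ are immediate.
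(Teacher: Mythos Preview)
Your setup is the right one and matches the paper's: assuming $j+k\le r-1$, take exact vertical separations $(A_1,A_2)$ of $G$ and $(B_1,B_2)$ of $R$, and observe that the four projective flats $F_i=\mathrm{cl}(A_i)$, $H_i=\mathrm{cl}(B_i)$ cover $PG(r-1,q)$ with $r(F_1)+r(F_2)=r+j-1$ and $r(H_1)+r(H_2)=r+k-1$. Where you and the paper diverge is in how to extract a contradiction from this covering.

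Your preferred route, induction via contracting a point $p$, has a genuine gap: the images of $G$ and $R$ in $PG(r-2,q)=\mathrm{si}(PG(r-1,q)/p)$ are \emph{not} complementary. A line through $p$ with both green and red points off $p$ becomes a single point of $PG(r-2,q)$ lying in the image of $G$ \emph{and} in the image of $R$. Since for $q\ge 2$ every line has at least three points, mixed lines are generic, and there is no way to assign them a single color that simultaneously controls the vertical connectivities of both images. So the induction hypothesis does not apply, and the argument cannot be salvaged by a clever choice of $p$.

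Your first approach, a four-flat covering bound, is what the paper actually carries out, but not as a clean rank-sum inequality; such an inequality sharp enough to isolate the single exception does not fall out of raw point counting. Instead the paper argues by cases on whether $a:=\max(r(F_1),r(F_2))$ and $x:=\max(r(H_1),r(H_2))$ equal $r-1$. From $G\subseteq F_1\cup F_2$ one gets $|G|\le (2q^{a}-q^{j-1}-1)/(q-1)$, and symmetrically for $|R|$. When $a=x=r-1$, the affine complements of the spanning hyperplanes force $q^{j-1}+q^{k-1}\ge q^{r-2}$, which is impossible for $j,k\le r-2$ unless $(q,r)=(2,3)$ and one then checks the exception by hand. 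When $a<r-1$ (and symmetrically $x<r-1$), the bounds on $|G|$ and $|R|$ already sum to less than $|PG(r-1,q)|$. The mixed case $x=r-1$, $a<r-1$ needs one more refinement, bounding $|G-H_X|$ via the intersection of $F_1$ with the hyperplane $H_X$. This case analysis, rather than a single covering lemma, is what pins down the exception exactly.
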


\begin{proof}
Assume that the result fails. If $R$ is empty, then $k=0$ and $j=r$, a contradiction. Thus we may assume that $G$ and $R$ are both non-empty. Then $j$ and $k$ are both non-zero, so we may assume that $j,k \in \{1,2,\dots, r-2\}$. Observe that if $r(R) < r$, then $G$ contains $AG(r-1,q)$ and hence $j \geq r-1$, a contradiction. Therefore $r(G)=r(R)=r$. Thus $G$ has  an exact vertical $j$-separation $(A,B)$   with $r(A) \geq r(B)$, and $R$ has  an exact vertical $k$-separation $(X,Y)$ with $r(X) \geq r(Y)$.  Let $r(A) = a$. Then
\begin{equation}
\label{g1}
|G| \leq \frac{q^{a}-1}{q-1} + \frac{q^{a}-1}{q-1} - \frac{q^{j-1}-1}{q-1} = \frac{2q^{a}- q^{j-1} -1}{q-1}. 
\end{equation}
By symmetry, with $r(X) = x$, we have 
\begin{equation}
\label{r1}
|R| \leq  \frac{2q^{x}- q^{k-1} -1}{q-1}.
\end{equation}

First suppose that $x = r-1$. Then $r(Y) = k$. Let $H_X$ be the projective hyperplane spanned by $X$. Observe that the intersection of $H_X$ with the projective flat $F_Y$ spanned by $Y$  is a projective flat of rank $k-1$. Thus, as $R-H_X \subseteq F_ Y - (F_Y \cap H_X)$, we have

\begin{equation}
\label{bound_R_first}
|R - H_X| \leq \frac{q^{k}-1}{q-1} - \frac{q^{k-1}-1}{q-1} = q^{k-1}. 
\end{equation}

Suppose that $a=r-1$. Then $r(B) = j$ and so, as for (\ref{bound_R_first}), we have  
$|G - H_A| \leq 
q^{j-1}$ where $H_A$ is the projective hyperplane spanned by $A$. Note that $E(PG(r-1,q))-(H_A \cup H_X)$ has at least $q^{r-2}$ elements and so it follows by (\ref{bound_R_first}) that $q^{k-1} + q^{j-1} \geq q^{r-2}$. Since $j$ and $k$ are in 
$\{1,2,\dots,r-2\}$, this is a contradiction unless $(q,r) = (2,3)$ and $k= 1 = j$ and $r= 3$. In the exceptional case, it is straightforward to check that $\{G,R\} = \{U_{3,3}, U_{2,3} \oplus U_{1,1}\}$, and we get the exceptional case noted in the proposition.

We may now assume that $a < r-1$. Let $F_A$ be the projective flat spanned by $A$. Observe that $F_A \cap H_X$ is a projective flat of rank $a$ or $a-1$. Thus 
\begin{eqnarray*}
|(R\cup G)-H_X| & = &|R- H_X| + |G - H_X|\\
& \leq &|R- H_X| + |G| - |F_A \cap H_X|\\
& \leq  &q^{k-1} + \frac{2q^{a} - q^{j-1}-1}{q-1} - \frac{q^{a-1}-1}{q-1}
\end{eqnarray*} 
where the last step follows by (\ref{bound_R_first}) and (\ref{g1}). 
As  $|(R\cup G)-H_X| = |E(AG(r-1,q)| = q^{r-1}$, we have $q^{k-1} + \frac{2q^{a} - q^{j-1}-1}{q-1} - \frac{q^{a-1}-1}{q-1} \geq q^{r-1}$. As the left-hand side of the last inequality is bounded above by  $q^{r-3} + \frac{q^{r-3}(2q-1)}{q-1} - \frac{q^{j-1}-1}{q-1}$, we deduce that $1 + \frac{2q-1}{q-1} > q^2$. This is a contradiction as $q \ge 2$.  We conclude that  $x < r-1$. By symmetry, 
 $a < r-1$. Then, by (\ref{r1}) and  (\ref{g1}), $|R| + |G| < \frac{q^r-1}{q-1}$, a contradiction. 
\end{proof}

Next, we move towards proving Theorem~\ref{equiv}. We omit the straightforward proof of the following result. 

\begin{lemma}
\label{comat_small_rank}
Let $M$ be a simple $GF(q)$-representable matroid.

\begin{enumerate}[label=(\roman*)]
    \item If $q=2$ and $r(M) \leq 3$, then $M$ is a $GF(q)$-comatroid.
    \item If $q=3$ and $r(M) \leq 2$, then $M$ is a $GF(q)$-comatroid.
\end{enumerate}
\end{lemma}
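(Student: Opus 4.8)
The plan is to verify the two cases directly by looking at all possibilities, exploiting the recursive definition of $GF(q)$-comatroid together with the fact (observed after the definition in the introduction) that every projective geometry $PG(k-1,q)$ is a $GF(q)$-comatroid and that $U_{n,n}$ is a $GF(q)$-comatroid for all $n\ge 0$. Throughout, $M$ is simple, so in the binary case with $r(M)\le 3$ we may view $M$ as a restriction of $PG(2,2)$ (the Fano plane), and in the ternary case with $r(M)\le 2$ we view $M$ as a restriction of $PG(1,3)=U_{2,4}$.

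For part~(i), I would dispose of $r(M)\le 1$ trivially: the only simple binary matroids of rank at most $1$ are $U_{0,0}$ and $U_{1,1}=PG(0,2)$, both comatroids. For $r(M)=2$, a simple binary matroid is $U_{2,2}$ or $U_{2,3}=PG(1,2)$, again both comatroids ($U_{2,2}=U_{1,1}\oplus U_{1,1}$ and $PG(1,2)$ is a projective geometry). For $r(M)=3$, write $M=PG(2,2)|G$ for some set $G$ of points of the Fano plane; the argument is to induct on $|E(M)|$, or equivalently to peel off the complement. If $G$ spans a line or less we are in a lower-rank case handled above (after possibly recognizing a direct-sum decomposition when $G$ is disconnected). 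Otherwise $r(G)=3$, and by Lemma~\ref{cover} the red set $R=PG(2,2)-G$ has rank at most $3$; if $r(R)\le 2$ then $M$ is obtained as a $(GF(2),3)$-complement of a matroid of rank at most $2$, which is a comatroid by the rank-$\le 2$ case together with closure under taking complements. If instead $r(R)=3$, then both $G$ and $R$ are connected or one of them is disconnected; in the latter event the disconnected one decomposes as a direct sum of lower-rank comatroids, whence its complement $M$ (or $M^c$) is a comatroid. The only remaining subcase is that both $G$ and $R$ have rank $3$ and are connected, and since $|G|+|R|=7$ this forces a small finite check (e.g.\ $\{|G|,|R|\}=\{3,4\}$ or $\{2,5\},\dots$), each instance of which is directly seen to be a comatroid (for example $M(K_4)$ is the Fano plane minus a point, whose complement is $U_{1,1}$). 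Assembling these cases gives the claim.

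For part~(ii), the argument is shorter: a simple ternary matroid of rank at most $2$ is one of $U_{0,0},U_{1,1}=PG(0,3),U_{2,2},U_{2,3},U_{2,4}=PG(1,3)$. Here $U_{0,0}$ and the projective geometries are comatroids by definition, $U_{2,2}=U_{1,1}\oplus U_{1,1}$ is a direct sum of comatroids, and $U_{2,3}$ is the $(GF(3),2)$-complement of $U_{1,1}$ (since removing one point of $PG(1,3)$ leaves three points of rank $2$), hence a comatroid.

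The main obstacle is purely bookkeeping in part~(i): one must be careful that when $G$ (or $R$) is disconnected the resulting direct-sum factors genuinely have rank strictly smaller than $3$, so that the inductive hypothesis applies, and one must correctly enumerate the handful of rank-$3$ subsets of the Fano plane in which both the set and its complement are connected and spanning. No single step is deep; the work is in confirming that these finitely many small matroids are all comatroids, which is exactly why the authors describe the proof as straightforward and omit it.
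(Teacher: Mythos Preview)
Your proposal is correct and matches the approach implicit in the paper, which omits the proof as straightforward; direct enumeration of the finitely many simple binary matroids of rank at most $3$ and simple ternary matroids of rank at most $2$ is exactly what is intended. One small simplification: in your rank-$3$ binary analysis the subcase where both $G$ and $R$ are connected of rank $3$ is actually vacuous, since $|G|+|R|=7$ forces one side to have at most three elements and a rank-$3$ matroid on three elements is $U_{3,3}$, which is disconnected (your $M(K_4)$ example, whose complement is $U_{1,1}$, really belongs to the case $r(R)\le 2$).
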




\begin{lemma}
\label{closure_under_components}
Let $M$ be a $GF(q)$-comatroid and suppose that $M$ is disconnected. Then each of its components is a $GF(q)$-comatroid.
\end{lemma}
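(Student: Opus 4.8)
The plan is to deduce this lemma from a stronger statement that is better suited to induction, namely: \emph{if $M$ is a $GF(q)$-comatroid and $F$ is a flat of $M$, then the restriction $M|F$ is a $GF(q)$-comatroid.} Call this statement $(\star)$. Granting $(\star)$, the lemma follows at once: writing the disconnected matroid $M$ as $M_1 \oplus \dots \oplus M_k$ with $k \ge 2$, where $M_1, \dots, M_k$ are its connected components, we have $\operatorname{cl}_M(E(M_i)) = E(M_i)$ for each $i$ because $M$ is simple and hence loopless; thus every $E(M_i)$ is a flat of $M$, and $(\star)$ shows that each component $M_i = M|E(M_i)$ is a $GF(q)$-comatroid.

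To prove $(\star)$, I would induct on the length of a recursive construction of $M$ as a $GF(q)$-comatroid. The base case $M = U_{0,0}$ is trivial, its only flat being $\emptyset$. For the inductive step, first suppose the construction ends with a direct sum, $M = M_1 \oplus M_2$, where $M_1$ and $M_2$ are $GF(q)$-comatroids with strictly shorter constructions. Every flat $F$ of $M$ has the form $F = F_1 \cup F_2$ with $F_i = F \cap E(M_i)$ a flat of $M_i$, and since $E(M_1)$ and $E(M_2)$ are skew in $M$ we get $M|F = (M_1|F_1) \oplus (M_2|F_2)$; by the inductive hypothesis the two summands are $GF(q)$-comatroids, hence so is $M|F$ by rule (ii).

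The main case is when the construction ends with complementation: $M$ is the $(GF(q),t)$-complement of a $GF(q)$-comatroid $N$ with a strictly shorter construction. Identifying $N$ with $PG(t-1,q)|E(N)$, we have $M = PG(t-1,q) \ba E(N)$ and $t \ge r(N)$. Let $F$ be a flat of $M$ and set $\widehat{F} = \operatorname{cl}_{PG(t-1,q)}(F)$, a projective flat of rank $r(F)$. Since closure in a restriction is the trace of the closure in the ambient matroid, $\widehat{F} \cap E(M) = F$. Viewing $\widehat{F}$ as the projective geometry $PG(t-1,q)|\widehat{F} \cong PG(r(F)-1,q)$, its ground set is partitioned into the green set $F$ (which spans $\widehat{F}$) and the red set $T := \widehat{F} \cap E(N)$; consequently $M|F = PG(t-1,q)|F$ is exactly the $(GF(q),r(F))$-complement of $PG(t-1,q)|T = N|T$. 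Now $T = \widehat{F} \cap E(N)$ is a flat of $N$, so by the inductive hypothesis $N|T$ is a $GF(q)$-comatroid; since $r(F) \ge r(T) = r(N|T)$, rule (iii) then shows that $M|F$, being the $(GF(q),r(F))$-complement of $N|T$, is a $GF(q)$-comatroid. This completes the induction.

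The one step I expect to require care is this last one: one must check that the ``red trace'' $T = \widehat{F} \cap E(N)$ of the projective span of $F$ is indeed a flat of $N$, that $M|F$ is realized as the complement of $N|T$ taken inside the smaller projective geometry $PG(t-1,q)|\widehat{F}$, and that the rank inequality $r(F) \ge r(N|T)$ needed to invoke rule (iii) holds. These are precisely the points where the interplay between the colouring, projective flats, and the definition of the $(GF(q),t)$-complement has to be pinned down; everything else reduces to standard facts about flats of direct sums and of restrictions.
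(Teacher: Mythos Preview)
Your proof is correct but follows a genuinely different route from the paper. The paper proves this lemma directly by taking a \emph{shortest} construction sequence for the disconnected comatroid $M$ and arguing that the last step cannot be a complement: if it were, the predecessor $N_1$ would have $N_1^c = M$ disconnected, and Proposition~\ref{sum_of_connectivities} then forces $N_1$ to be connected, whence its own predecessor must again be a complement of strictly larger rank, producing an infinite regress. Thus the last step is a direct sum, and an induction on the number of components finishes. This argument leans on the non-trivial connectivity estimate of Proposition~\ref{sum_of_connectivities}.

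You instead prove the stronger statement $(\star)$---closure of comatroids under restriction to flats, which is the paper's Lemma~\ref{closure_under_flats}---by structural induction on the construction, and then read off the present lemma as a special case. The paper proves Lemma~\ref{closure_under_flats} \emph{after} this lemma, using this lemma (and Lemma~\ref{not_comatroid_1}) as ingredients and inducting on rank; you have effectively reversed the logical order and, in doing so, bypassed Proposition~\ref{sum_of_connectivities} entirely. Your complement step is the heart of the matter and is handled cleanly: the projective span $\widehat{F}$ of a flat $F$ of $M$ is a smaller projective geometry in which the green part is $F$ and the red part $T = \widehat{F}\cap E(N)$ is automatically a flat of $N$, so $M|F$ is the $(GF(q),r(F))$-complement of the comatroid $N|T$. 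The trade-off is that the paper's argument yields the extra structural fact that any shortest construction of a disconnected comatroid ends in a direct sum, which your approach does not record.
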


\begin{proof}
By Lemma~\ref{comat_small_rank}, we may assume that $r(M) \geq 4$. Let $M=M_1 \oplus M_2$. 
Take a shortest sequence of direct sums and complements that shows that $M$ is a $GF(q)$-comatroid. 
Assume that the final step in creating $M$ is not a direct sum. Then this final step involves taking the $(GF(q),t)$-complement of some matroid $N_1$ 
where $t \ge r(N_1)$. As $M$ is disconnected, $t  = r(N_1) \ge r(M)$, otherwise $M$ has $AG(t-1,q)$ as a restriction and so is connected.
Thus $N_1^c = M$. 
Moreover, as the vertical connectivity of $M$ is one, Proposition \ref{sum_of_connectivities} implies that $N_1$ is connected. Since $N^c_1 = M$, the predecessor of $N_1$ in the construction of $M$ is its $(GF(q),s)$-complement $N_2$ for some $s \geq r(N_1)+1$. Then $N_2$ has $AG(s-1,q)$ as a restriction, so it is connected. The predecessor of $N_2$ in the production of $M$ must again be a connected matroid $N_3$ of rank exceeding $r(N_2)$. Tracing back the predecessors of $M$ in its creation as a $GF(q)$-comatroid, we obtain an infinite sequence of matroids of increasing ranks. This contradicts the fact that $M$ is created by a finite process. We conclude that, when $M$ is a disconnected  $GF(q)$-comatroid, the final step in constructing it is taking the direct sum of two $GF(q)$-comatroids. Thus if $M$ has exactly two components, then each component is a $GF(q)$-comatroid. We now argue by induction on the number of components of $M$. As the final step in creating $M$ is taking a direct sum of two $GF(q)$-comatroids, it follows by induction that each component of $M$ is a $GF(q)$-comatroid. 
\end{proof}

\begin{lemma}
\label{not_comatroid_1}
For $q$ in $\{2,3\}$, a $GF(q)$-representable matroid $M$ such that $r(M)=r(M^c)$ and both $M$ and $M^c$ are connected is not a $GF(q)$-comatroid.
\end{lemma}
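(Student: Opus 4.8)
The plan is to mimic, in reverse, the inductive argument used in Lemma~\ref{closure_under_components}. Suppose for contradiction that $M$ is a $GF(q)$-comatroid with $r(M)=r(M^c)$ and both $M$ and $M^c$ connected. Since $r(M)=r(M^c)$, the matroid $M$ cannot contain $AG(r(M)-1,q)$ as a restriction (otherwise its red complement would have rank strictly less than $r(M)$ by Lemma~\ref{cover}, or more precisely the complement of $AG(r-1,q)$ is a hyperplane). Take a shortest construction sequence of direct sums and $(GF(q),t)$-complements witnessing that $M$ is a $GF(q)$-comatroid, and examine the final step.

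First I would rule out that the final step is a direct sum: this would make $M$ disconnected, contrary to hypothesis (here I should handle the degenerate case where one summand is $U_{0,0}$, which is not really a direct sum in the shortest sequence). So the final step is taking the $(GF(q),t)$-complement of some matroid $N$ with $t\ge r(N)$. If $t>r(N)$, then $M$ has $AG(t-1,q)$ as a restriction, which as noted forces $r(M^c)<r(M)$, a contradiction; so $t=r(N)$ and $N^c=M$, whence $r(N)=r(N^c)=r(M)$. Now I invoke Proposition~\ref{sum_of_connectivities}: since both $M=N^c$ and $N$ are connected and have full rank $r$, their vertical connectivities are each at least $2$... actually what I need is simply that $N$ is connected. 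The point is that the complement relation is symmetric at the top rank: $M^c=N$, and $M^c$ is connected by hypothesis, so $N$ is connected, and $N$ has the same two properties as $M$, namely $r(N)=r(N^c)$ with both $N$ and $N^c$ connected. But the construction sequence for $N$ is strictly shorter than that for $M$, so by induction on the length of the shortest construction sequence, $N$ is not a $GF(q)$-comatroid --- a contradiction, since $N$ appears in a valid construction sequence.

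To make the induction clean, I would phrase it as: among all $GF(q)$-comatroids $M$ with $r(M)=r(M^c)$ and both $M,M^c$ connected, choose one with a construction sequence of minimum length, and derive the contradiction above by producing $N$ with a shorter one and the same properties. The base case is when the sequence has length zero or one, i.e., $M=U_{0,0}$ or $M$ is a projective geometry $PG(r-1,q)$; in the former case the hypotheses are vacuously violated, and in the latter $M^c=U_{0,0}$ has rank $0\ne r$, so no such $M$ exists. Lemma~\ref{comat_small_rank} lets me assume $r(M)\ge 4$ if $q=2$ and $r(M)\ge 3$ if $q=3$, which keeps the exceptional case $(q,r)=(2,3)$ of Proposition~\ref{sum_of_connectivities} out of play --- though in fact I will only use Proposition~\ref{sum_of_connectivities} implicitly, since the key structural input I need is really just that $N=M^c$ is connected, which is given.

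The main obstacle I anticipate is bookkeeping around the degenerate steps in a construction sequence: deciding whether $U_{0,0}$ summands or complements taken at a rank equal to the current rank (which do nothing or merely pass between $M$ and $M^c$) should be allowed in a "shortest" sequence, and ensuring that after stripping such steps the final step is genuinely either a nontrivial direct sum (giving disconnectedness) or a complement with $N^c=M$. Once the sequence is normalized so that no step is vacuous, the argument is just the reverse trace already carried out in the proof of Lemma~\ref{closure_under_components}, and the contradiction is that the hypotheses "$r(M)=r(M^c)$, both connected" are an obstruction that propagates down the whole sequence to its impossible base case.
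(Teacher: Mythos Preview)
Your proposal is correct and follows essentially the same approach as the paper: both take a shortest construction sequence, argue that the final step must be a complementation at rank $r(M)$ with predecessor $N=M^c$ (which inherits the same hypotheses), and obtain a contradiction because this forces the backward trace to continue indefinitely. Your minimal-counterexample phrasing is slightly cleaner than the paper's infinite-descent wording, but the argument is the same.
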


\begin{proof}
For $M$ to satisfy the hypotheses of the lemma, we must have $r(M) \geq 3$. Moreover, $r(M) \geq 4$ if $q=2$. Assume that $M$ is a $GF(q)$-comatroid. 
Again  we take a shortest sequence of direct sums and complements that shows that $M$ is a $GF(q)$-comatroid. 
Then the final step in creating $M$ must have been taking a complement. As $r(M)=r(M^c)$, for some $N_0$ in $\{M,M^c\}$, the predecessor of $N_0$ in the creation of $M$ is the $(GF(q),t)$-complement $N_1$ of $N_0$ for some $t > r(M)$. This matroid is also connected. Its predecessor in the construction of $M$ is the $(GF(q),s)$-complement $N_2$ of $N_1$ for some $s > r(N_1)$. Again, $N_2$ is connected and this process must continue indefinitely, a contradiction. 
\end{proof}

As an immediate consequence of the last lemma, we have the following.

\begin{corollary}
\label{cct}
A $k$-circuit is   a $GF(q)$-comatroid if and only if $q+k \le 6$.
\end{corollary}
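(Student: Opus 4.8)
The plan is to prove the two implications separately, each reducing to a preliminary result already available; note first that every $k$-circuit $U_{k-1,k}$ is $GF(q)$-representable for each prime power $q$ (for instance because its dual $U_{1,k}$ is), so the statement is not vacuous, and throughout I read ``$k$-circuit'' as a circuit of a simple matroid, so that $k \ge 3$. For the ``if'' direction, suppose $q+k \le 6$. If $q=2$ then $k \le 4$, so $U_{k-1,k}$ has rank at most three and hence is a binary comatroid by Lemma~\ref{comat_small_rank}(i); if $q=3$ then $k \le 3$, so $U_{k-1,k}$ has rank at most two and is a ternary comatroid by Lemma~\ref{comat_small_rank}(ii).

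For the ``only if'' direction, suppose $q+k \ge 7$, so that $k \ge 5$ when $q=2$ and $k \ge 4$ when $q=3$; in each case I will use the elementary inequality $q^{k-2} \ge k+2$, which one checks on the base case ($q=2,k=5$ gives $8 \ge 7$; $q=3,k=4$ gives $9 \ge 6$) and propagates using that $q^{k-2}/(k+1)$ increases in $k$. Put $M = U_{k-1,k}$, which is connected, and write $M \cong PG(k-2,q)|T$ with $|T| = k$, so that $M^c = PG(k-2,q)\ba T$ is the restriction of $PG(k-2,q)$ to $G := E(PG(k-2,q)) - T$. The aim is to verify the hypotheses of Lemma~\ref{not_comatroid_1} --- that $M^c$ is connected and that $r(M^c) = r(M)$ --- since that lemma then gives at once that $M$ is not a $GF(q)$-comatroid.

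Both facts rest on a single counting observation, which I regard as the crux: in $PG(k-2,q)$, neither a projective hyperplane nor a disjoint union of two projective flats spanning the geometry can have a complement small enough to lie inside $T$. Concretely, if $r(G) < k-1$ then $G$ lies in a projective hyperplane $H$, forcing $T$ to contain all $q^{k-2}$ points outside $H$, contradicting $|T| = k < q^{k-2}$; hence $r(M^c) = k-1 = r(M)$. And if $M^c$ were disconnected then, since $r(G) = k-1$, one may write $M^c$ as a direct sum of restrictions to nonempty sets $G_1, G_2$ whose spans $F_1, F_2$ satisfy $r(F_1) = a$ and $r(F_2) = k-1-a$ with $1 \le a \le k-2$ and $F_1 \vee F_2 = PG(k-2,q)$, whence $F_1 \cap F_2 = \emptyset$ by submodularity; but then $T$ must contain all $\tfrac{(q^a-1)(q^{k-1-a}-1)}{q-1}$ points outside $F_1 \cup F_2$, a quantity minimized at $a=1$, where it equals $q^{k-2}-1 > k$ --- a contradiction. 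I anticipate no real obstacle beyond this bookkeeping; the only delicate points are the inequality $q^{k-2} \ge k+2$ over the admissible pairs $(q,k)$ and the routine structural fact that a full-rank disconnected restriction of $PG(k-2,q)$ splits across two disjoint projective flats of complementary rank.
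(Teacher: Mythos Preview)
Your proof is correct and follows the same route as the paper: the ``if'' direction via Lemma~\ref{comat_small_rank} and the ``only if'' direction via Lemma~\ref{not_comatroid_1}. The paper records the corollary as an ``immediate consequence'' of Lemma~\ref{not_comatroid_1} without spelling out why the complement of a $k$-circuit in $PG(k-2,q)$ is connected of full rank; your counting argument supplies exactly those details, and the bound $q^{k-2}-1>k$ is what makes it go through even in the tight case $(q,k)=(3,4)$.
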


\begin{lemma}
\label{closure_under_flats}
The restriction of a $GF(q)$-comatroid to one of its flats is a $GF(q)$-comatroid. 
\end{lemma}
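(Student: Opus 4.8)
The plan is to prove Lemma~\ref{closure_under_flats} by induction on the length of a construction sequence for the $GF(q)$-comatroid $M$, peeling off the last operation used to build $M$ and tracking what happens to a flat $F$ of $M$ under that operation. So let $F$ be a flat of $M$; I want to show $M|F$ is a $GF(q)$-comatroid. If $M = U_{0,0}$ there is nothing to prove, and if $M = M_1 \oplus M_2$ is a direct sum of comatroids, then $F = F_1 \cup F_2$ where $F_i$ is a flat of $M_i$, so $M|F = (M_1|F_1) \oplus (M_2|F_2)$; each $M_i|F_i$ is a comatroid by induction, and hence so is their direct sum. This reduces the problem to the case where the last step in building $M$ is taking a complement.

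So suppose $M = N^c$ where $N$ is a $GF(q)$-comatroid with $r(N) \le r(M)$, viewing both inside $PG(r(M)-1,q)$; equivalently, $(E(N), E(M))$ is a $2$-coloring of a projective flat of $PG(r(M)-1,q)$ of rank $r(M)$ (by Lemma~\ref{cover}, since $E(N) \cup E(M)$ spans, at least one of them has full rank; if $E(N)$ has smaller rank one must be a touch careful, but $N$ being a comatroid lets us instead use its own complement at the right rank). The key geometric observation is this: a flat $F$ of $M$ has the form $E(M) \cap F'$ for a projective flat $F'$ of $PG(r(M)-1,q)$, namely $F' = \mathrm{cl}_{PG}(F)$. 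Let $s = r(F') = r_M(F)$. Inside $F'$, which is itself a copy of $PG(s-1,q)$, the pair $(E(N) \cap F', E(M) \cap F') = (E(N) \cap F', F)$ is a $2$-coloring, and $N|(E(N)\cap F')$ is a flat of $N$, hence a $GF(q)$-comatroid by the inductive hypothesis applied to $N$ (which has a strictly shorter construction sequence). Now $M|F$ is, by definition of the complement operation, the $(GF(q), s)$-complement of $N|(E(N) \cap F')$ — provided $E(N) \cap F'$ spans $F'$, i.e.\ has rank $s$. When that holds we are done, since complements of comatroids (at any admissible rank) are comatroids.

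The main obstacle, then, is the case $r(E(N) \cap F') < s$, i.e.\ the green–to–$M$ elements inside the projective flat $F'$ do not span $F'$, so $F'$ is not the projective closure of the $N$-side. In that situation, $F = E(M) \cap F'$ together with $E(N) \cap F'$ covers all of $F' \cong PG(s-1,q)$ while $E(N) \cap F'$ lies in a proper projective subflat; hence $F$ contains $AG(s-1,q)$ and $M|F$ is connected with $r(M|F) = s$. I would handle this by arguing that $M|F$ is still the complement of a comatroid: take $N' = N|(E(N)\cap F')$, a comatroid of rank $s' < s$; then $M|F = (N')^c$ computed inside $F' \cong PG(s-1,q)$, which is exactly the $(GF(q), s)$-complement of the rank-$s'$ comatroid $N'$ with $s \ge r(N')$. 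This is an admissible complement operation in the recursive definition of $GF(q)$-comatroid (clause (iii) allows the $(GF(q),t)$-complement for all $t \ge r(N')$), so $M|F$ is a comatroid regardless of whether $E(N)\cap F'$ spans $F'$. In other words, the apparent obstacle dissolves once one notices that the definition of comatroid permits taking complements at any rank $\ge$ the rank of the matroid, not only at its own rank.

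To make the induction airtight I would state it carefully as: among all $GF(q)$-comatroids $M$ admitting a construction sequence of length $\le n$, every flat-restriction is a $GF(q)$-comatroid; the base case $n$ minimal is $M = U_{0,0}$ (or small-rank matroids via Lemma~\ref{comat_small_rank}), and the inductive step is the two-case analysis above, where in each case the relevant matroid fed into the comatroid operation ($M_i|F_i$, or $N|(E(N)\cap F')$) is a flat-restriction of a comatroid with a shorter construction sequence. One routine point to check is that $E(N) \cap F'$ really is a flat of $N$: since $F'$ is a projective flat, $F' \cap E(N)$ is the closure in $N$ of any spanning subset, so it is a flat of $N$; and that the $(GF(q),s)$-complement in the definition genuinely matches $M|F$, which is immediate from the fact that restricting a projective geometry to a projective flat $F'$ gives $PG(s-1,q)$ and restriction commutes with the coloring. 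I expect the whole proof to be short, with the only subtlety being the bookkeeping that the complement clause (iii) is flexible enough in its rank parameter to absorb the non-spanning case.
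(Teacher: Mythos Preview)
Your proof is correct and takes a genuinely different route from the paper. The paper inducts on $r(M)$ and splits into cases according to whether $M$ is connected: in the disconnected case it invokes Lemma~\ref{closure_under_components} (whose proof in turn rests on Proposition~\ref{sum_of_connectivities}), and in the connected case it invokes Lemma~\ref{not_comatroid_1} to force $M^c$ to be disconnected or of smaller rank before applying the inductive hypothesis to $M^c$. Your induction on the length of a construction sequence sidesteps all of this machinery: you peel off the last operation directly, and in the complement case you observe that restricting to a projective flat $F'$ exhibits $M|F$ as the $(GF(q),s)$-complement of the comatroid $N|(E(N)\cap F')$, invoking only the inductive hypothesis on $N$. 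This is more elementary --- it needs none of Proposition~\ref{sum_of_connectivities}, Lemma~\ref{closure_under_components}, or Lemma~\ref{not_comatroid_1} --- though those results are used elsewhere in the paper, so the saving is local rather than global. One small cleanup: the last step in building $M$ is in general a $(GF(q),t)$-complement for some $t\ge r(N)$, and you should carry out your flat argument inside $PG(t-1,q)$ rather than $PG(r(M)-1,q)$; it is not always true that $r(N)\le r(M)$ (this can fail when $t=r(N)$ and $r(M)<t$), but your argument about the $(GF(q),s)$-complement of $N|(E(N)\cap F')$ goes through verbatim once the ambient space is taken to be $PG(t-1,q)$.
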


\begin{proof}
We argue by induction on the rank of the $GF(q)$-comatroid $M$. The result holds by Lemma \ref{comat_small_rank} if $r(M) \leq 2$. 
 Now assume it holds for every $GF(q)$-comatroid of rank less than $n$ and let $M$ be a $GF(q)$-comatroid of rank $n$ where $n \ge 3$. Then $M$ is obtained by taking complements and direct sums. Let $F$ be a proper flat of $M$. Assume first that $M$ is disconnected. Then, by Lemma~\ref{closure_under_components} and the induction assumption, $M|(F \cap E(M_i))$ is a $GF(q)$-comatroid for each component $M_i$ of $M$. Thus  $M|F$ is a $GF(q)$-comatroid. We may now assume that $M$ is connected. Suppose $N=M^c$. Then $N$ is a $GF(q)$-comatroid.  Let $F_P$ be the projective flat of $PG(r(M)-1,q)$ that is spanned by $F$. Then $F_P-F$ is a flat of $N$. The complement of $N|(F_P-F)$ in $F_P$ is $M|F$. Assume first that $r(N)<r(M)$. Then, by the induction assumption, $N|(F_P-F)$ is a $GF(q)$-comatroid. Thus $M|F$ is also a $GF(q)$-comatroid. Hence we may assume that $r(N)=r(M)$. By Lemma~\ref{not_comatroid_1}, $N$ is not connected, so, by the induction assumption, $N|(F_P-F)$ and hence $M|F$ is a $GF(q)$-comatroid.
\end{proof}

\begin{proof}[Proof of Theorem \ref{equiv}]
Suppose $M$ is a non-empty $GF(q)$-comatroid. By Lemma \ref{closure_under_flats}, if $F$ is a flat of $PG(r(M)-1,q)$, then $M|(F\cap E(M))$ is a $GF(q)$-comatroid. Hence so is  its complement in $F$, namely $PG(r(M)-1,q)|(F-E(M))$. 
By Lemma~\ref{cover}, at least one of $r(F\cap E(M))$ and $r(F-E(M))$ is $r(F)$. 
Thus, by Lemma \ref{not_comatroid_1}, if $r(F\cap E(M))=r(F-E(M))$, then the restriction of $PG(r(M)-1,q)$ to $F \cap E(M)$ or $F-E(M)$ is disconnected.

Conversely, suppose that $M$ is non-empty and that, for all flats $F$ of $PG(r(M)-1,q)$ with $r(F \cap E(M)) = r(F - E(M))$, the restriction of $PG(r(M)-1,q)$ to either $F \cap E(M)$ or $F - E(M)$ is disconnected. We argue by induction on $r(M)$ that $M$ is a $GF(q)$-comatroid. By Lemma~ \ref{comat_small_rank}, this is true if $r(M) \leq 2$. Assume it is true for $r(M) < n$ and let $r(M)=n$. If $M$ is disconnected, then, by the induction assumption,  each component is a $GF(q)$-comatroid. Hence so too is $M$. Thus $M$ is connected. Suppose $r(M) = r(M^c)$. Then, by the hypothesis, $M^c$ is disconnected. Since $M^c$ obeys the same condition as $M$, each of its components is a $GF(q)$-comatroid. Thus so is $M^c$. Hence $M$ is a $GF(q)$-comatroid. We may now assume that $r(M) > r(M^c)$. Take $F_0$ to be the flat of $PG(r(M)-1,q)$ spanned by $M^c$. Let $F_1$ be a flat of $F_0$ with $r(F_1 \cap E(M^c)) = r(F_1 - E(M^c))$. Then $r(F_1 \cap E(M)) = r(F_1 - E(M))$ so the restriction of $PG(r(M)-1,q)$ to $F_1 \cap E(M)$ or $F_1 - E(M)$ is disconnected. Thus the restriction of $F_0$ to $F_1 \cap E(M^c)$ or $F_1 - E(M^c)$ is disconnected. We conclude that $M^c$ obeys the same condition as $M$, so, by the induction assumption, $M^c$ is a $GF(q)$-comatroid. The $(GF(q),r(M))$-complement of $M^c$ is $M$ so it too is a $GF(q)$-comatroid, as required.
\end{proof}

In the following result, we note that if a $GF(q)$-comatroid is connected, it is highly connected.

\begin{proposition}
\label{comat_conn}
Let $M$ be a connected $GF(q)$-comatroid of rank $r$. Then $M$ is vertically $(r-1)$-connected.
\end{proposition}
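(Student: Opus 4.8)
The plan is to induct on the rank $r$ and leverage Proposition~\ref{sum_of_connectivities} together with the characterization in Theorem~\ref{equiv}. The base cases $r \le 3$ (for $q=2$) or $r \le 2$ (for $q=3$) follow from Lemma~\ref{comat_small_rank}, since vertical $(r-1)$-connectivity is vacuous or trivial for such small ranks. So I would assume $M$ is a connected $GF(q)$-comatroid of rank $r$ with $r$ large enough, and suppose for contradiction that $M$ has a vertical $k$-separation $(A,B)$ with $k \le r-2$; choose $k$ minimal, so $k$ is the vertical connectivity of $M$ and in particular $r(A), r(B) < r$ and $r(A)+r(B) - k = r$.

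The key step is to apply the comatroid condition to a well-chosen flat. Let $F_A$ be the projective flat of $PG(r-1,q)$ spanned by $A$, of rank $r(A) = a$. Inside $F_A$, consider the $2$-coloring induced by $M$: the green set is $F_A \cap E(M) \supseteq A$ and the red set is $F_A - E(M)$. By Lemma~\ref{cover}, at least one of these has full rank $a$ in $F_A$. I want to arrange a flat $F$ on which $r(F \cap E(M)) = r(F - E(M))$ but neither colour class restricts to a disconnected matroid, contradicting Theorem~\ref{equiv}. The cleanest route: since $M$ is connected, $M|A$ need not be connected, but the relevant object is the restriction of $PG(r-1,q)$ to $F_A \cap E(M)$. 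The idea is that $B - F_A$ is nonempty (else $B \subseteq F_A$ forces $r(B) \le a$ and one checks $r(A)+r(B)-k < r$ unless the separation is trivial), and one uses the interaction of the separation $(A,B)$ with $F_A$ to produce, within some projective flat $F$ of rank close to $r$, two full-rank colour classes each of which is connected — the connectivity coming from the fact that $F$ is large enough that, by Proposition~\ref{sum_of_connectivities}, the vertical connectivities of the two colour classes sum to at least $r(F)$, forcing both to be comparatively highly connected, hence connected. This contradicts the conclusion of Theorem~\ref{equiv} that one of them must be disconnected.

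More precisely, I would argue as follows. Since $M$ is a connected comatroid, $r(M) > r(M^c)$ is impossible to rule out directly, so I separate cases on whether $r(M^c) = r$ or $r(M^c) < r$. If $r(M^c) < r$, then $E(M)$ spans $PG(r-1,q)$ with a large green set, so $G = M$ contains $AG(r-1,q)$ and its vertical connectivity is at least $r-1$, done. If $r(M^c) = r$, then by Lemma~\ref{not_comatroid_1} applied carefully one sees $M^c$ is disconnected; write $M^c = N_1 \oplus N_2$. Now take $F$ to be the projective flat spanned by $E(N_1)$. The green set in $F$ is $E(N_1)$ together with possibly more elements of $E(M)$, and the red set $F - E(M)$ includes $E(N_1)$... wait — I need to be careful about which elements are green. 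Let me instead take the flat argument on the $M$ side: apply Proposition~\ref{sum_of_connectivities} with the roles reversed, using that $G$ and $R = $ (complement) partition $PG(r-1,q)$ and $j + k \ge r$ except in the one sporadic case $(q,r)=(2,3)$, which is covered by the base case. So the vertical connectivity of $M = G$ is at least $r - (\text{vertical connectivity of } R)$. It therefore suffices to bound the vertical connectivity of $R$, i.e. of $M^c$ in $PG(r-1,q)$, and here I would induct: since $M^c$ restricted to appropriate flats is a comatroid (Lemma~\ref{closure_under_flats}), one controls its connectivity recursively.

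The main obstacle I expect is the bookkeeping in the case $r(M^c) = r$: one must show that a connected comatroid $M$ of rank $r$ cannot have a low vertical separation even though its complement $M^c$ is disconnected and could a priori be anything. The crux is combining (i) Proposition~\ref{sum_of_connectivities}, which says the vertical connectivities of $M$ and $M^c$ sum to at least $r$, with (ii) the fact that $M^c$ disconnected means its vertical connectivity is $1$ (since a disconnected matroid is not vertically $2$-connected, being a direct sum) — but $1 + (r-1) = r$ only gives vertical connectivity $\ge r-1$ for $M$ if Proposition~\ref{sum_of_connectivities}'s inequality is exactly $j + k \ge r$, which it is. So in fact the whole argument collapses to: if $M$ is a connected comatroid of rank $r$ with $r(M^c) = r$, then $M^c$ is disconnected by Lemma~\ref{not_comatroid_1}, hence has vertical connectivity $\le 1$ wait it could be $0$ if $M^c$ is empty, but then $M = PG(r-1,q)$ which is vertically $(r-1)$-connected; and if $M^c$ is nonempty and disconnected its vertical connectivity is exactly $1$, so by Proposition~\ref{sum_of_connectivities} the vertical connectivity of $M$ is $\ge r-1$, except in the sporadic $(2,3)$ case handled by Lemma~\ref{comat_small_rank}. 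The case $r(M^c) < r$ gives $M \supseteq AG(r-1,q)$ directly. I would write up exactly this clean dichotomy, citing Propositions~\ref{sum_of_connectivities} and Lemma~\ref{not_comatroid_1}, with the small-rank cases deferred to Lemma~\ref{comat_small_rank}.
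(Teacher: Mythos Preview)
Your final clean dichotomy is exactly the paper's proof: Lemma~\ref{not_comatroid_1} forces $M^c$ to be disconnected or to have rank less than $r$; in the first case Proposition~\ref{sum_of_connectivities} gives $M$ vertical connectivity at least $r-1$ (the sporadic $(q,r)=(2,3)$ case cannot occur since $M$ is connected), and in the second case $M$ contains $AG(r-1,q)$, which is vertically $(r-1)$-connected. The exploratory induction and flat-restriction ideas in your first two paragraphs are unnecessary detours and should be discarded from the write-up.
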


\begin{proof}
By Lemma \ref{not_comatroid_1}, $M^c$ is either disconnected or has rank less than $r$. If $M^c$ is disconnected, then, by Proposition \ref{sum_of_connectivities}, $M$ is vertically $(r-1)$-connected. We may now assume that $M^c$ has rank less than $r$. Then $M$ is an extension of $AG(r-1,q)$. Since this affine geometry is vertically $(r-1)$-connected, the result follows.
\end{proof}

Next we show that the class of $GF(q)$-comatroids is closed under induced minors. For a subset $X$ of the ground set of a simple $GF(q)$-representable matroid $M$, we say $X$ is \textbf{connected} if $M|X$ is connected. When $X$ is a flat of $M$, we denote by $X^c$ the matroid $(M|X)^c$  that is obtained from the projective closure of $X$ by deleting $X$.

\begin{proposition}
\label{comat_contract}
Every induced minor of a $GF(q)$-comatroid is a $GF(q)$-comatroid.
\end{proposition}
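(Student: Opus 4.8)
The plan is to argue by induction on the rank of the $GF(q)$-comatroid $M$. Since restricting to a flat preserves the property of being a $GF(q)$-comatroid (Lemma~\ref{closure_under_flats}), the only operation whose behaviour needs to be analyzed is ``contract an element and simplify''; and since an induced minor is built by a sequence of such steps interleaved with flat-restrictions, it suffices to show that if $M$ is a $GF(q)$-comatroid and $e \in E(M)$, then the simplification $\mathrm{si}(M/e)$ is again a $GF(q)$-comatroid. (One should first dispose of the trivial cases: if $e$ is a loop or coloop, or if $r(M)$ is small, so that the base of the induction is covered by Lemma~\ref{comat_small_rank}.)

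For the inductive step, I would split into cases according to the last operation used to build $M$. If $M = M_1 \oplus M_2$ is a direct sum, then $e$ lies in one summand, say $e \in E(M_1)$, and $\mathrm{si}(M/e) \cong \mathrm{si}(M_1/e) \oplus M_2$; by the induction hypothesis applied to $M_1$ (which has rank at most $r(M)$, with strict inequality unless $M_2$ is empty — here one uses that $M_2$ nonempty forces $r(M_1) < r(M)$), $\mathrm{si}(M_1/e)$ is a $GF(q)$-comatroid, so the direct sum is too. The substantive case is when the last step is taking a complement: $M = N^c$ inside $PG(r(M)-1,q)$ for a $GF(q)$-comatroid $N$ with $r(N) \le r(M)$, where $E(N)$ is exactly the red set $R$ and $E(M)$ the green set $G$. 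Here I want to understand $\mathrm{si}(M/e)$ projectively: contracting $e$ in $PG(r(M)-1,q)$ and simplifying yields $PG(r(M)-2,q)$, and the image of $G \setminus e$ is a restriction of it whose complement in $PG(r(M)-2,q)$ is the image of $R$ under the same contraction, i.e.\ (the simplification of) a restriction of $N/e$ to a flat. So $\mathrm{si}(M/e)$ is the complement, inside a projective geometry, of a matroid that is obtained from $N$ by a contraction followed by a flat-restriction — that is, an induced minor of $N$ — which by the induction hypothesis is a $GF(q)$-comatroid; hence $\mathrm{si}(M/e)$, being its $(GF(q),t)$-complement for the appropriate $t$, is a $GF(q)$-comatroid.

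The main obstacle is bookkeeping about ranks and about exactly which flat of $PG(r(M)-2,q)$ we land in after contracting and simplifying, so that the ``complement'' operation is legitimately applied. Concretely: after projecting from $e$, some green elements may collapse onto red elements or onto each other, so $\mathrm{si}(G \setminus e)$ and $\mathrm{si}(R \setminus e)$ need not partition $PG(r(M)-2,q)$ — a point that is green before contraction but whose line through $e$ also contains a red point becomes, in the quotient, a point that should be regarded as both, and one must decide it is green and remove it from the red side. Making this precise, I would phrase it via Theorem~\ref{equiv}: verify directly that $\mathrm{si}(M/e)$ satisfies the flat condition of Theorem~\ref{equiv}, by pulling back each flat $F$ of $PG(r(M)-2,q)$ with $r(F \cap E(\mathrm{si}(M/e))) = r(F - E(\mathrm{si}(M/e)))$ to the flat $\mathrm{cl}_{PG(r(M)-1,q)}(F \cup e)$ of $PG(r(M)-1,q)$, which contains $e$, checking that the corresponding green/red rank equality is inherited, and transferring the required disconnectedness downward — disconnectedness of a restriction is preserved under contracting a point not in the relevant closure and simplifying. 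This reduces the whole proposition to a clean application of Theorem~\ref{equiv} together with Lemma~\ref{closure_under_flats}, at the cost of the projective-geometry lemma about contraction that the paper has presumably already set up.
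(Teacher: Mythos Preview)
Your first approach---induction on rank, splitting by the last construction step---does not work as stated. In the complement case $M = N^c$ you want to say that $(\mathrm{si}(M/e))^c$ is an induced minor of $N$, obtained ``from $N$ by a contraction followed by a flat-restriction.'' But $e \in E(M)$ means $e \notin E(N)$, so $N/e$ is not defined. Interpreting things projectively does not save this: the image of $R = E(N)$ under contracting $e$ in $PG(r(M)-1,q)$ is the set of lines through $e$ meeting $R$, whereas $(\mathrm{si}(M/e))^c$ is the (generally smaller) set of lines through $e$ lying \emph{entirely} in $R \cup \{e\}$. The latter is not in general a flat of the former, so you cannot realize it as a flat-restriction of something built from $N$. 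The induction on rank also stalls when $r(N)=r(M)$.

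Your second approach---verify the criterion of Theorem~\ref{equiv} for $\mathrm{si}(M/e)$ by pulling back flats to $PG(r(M)-1,q)$---is exactly what the paper does, but the paper runs it as a proof by contradiction and in the opposite direction from what you describe. One assumes $\mathrm{si}(M/e)$ fails the criterion, takes a rank-$k$ flat $F$ with both $F$ and $F^c$ connected of rank $k$, and lifts to $\mathrm{cl}_M(F\cup e)$. The substantive step, which you do not address, is showing that the \emph{complement} of $\mathrm{cl}_M(F\cup e)$ in its projective closure is connected of rank $k+1$. This is not a matter of ``contracting a point and preserving disconnectedness'': on the red side there is no contraction relationship to exploit, since $e$ is green. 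The paper's argument is that for each $g \in F^c$ the entire projective line through $e$ and $g$, minus $e$, is red; any two such punctured lines span a rank-$3$ flat and share a $4$-circuit, so their union is connected of rank $k+1$, and since this set spans the complement, the complement is connected. The coloop case ($e$ a coloop of $M|(F\cup e)$) is handled separately by observing that then $F$ itself is already a bad flat of $M$.
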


\begin{proof}
By Lemma \ref{closure_under_flats}, the restriction of a $GF(q)$-comatroid to one of its flats is a $GF(q)$-comatroid. Now take an element $e$ of $M$ and assume that ${\rm si}(M/e)$ is not a $GF(q)$-comatroid.
View ${\rm si}(M/e)$ as a restriction of $PG(r(M)-2,q)$. Then, by Theorem \ref{equiv}, there is a flat $F$ of ${\rm si}(M/e)$ such that $F$ and $F^c$ are both connected and each has rank $k$, say. Observe that ${\rm cl}_M(F \cup e)$ is a connected flat of $M$ of rank $k+1$ unless $e$ is a coloop of $M|(F \cup e)$. In the exceptional case, $F$ is a flat of $M$ and, therefore, $M$ has a flat $F$ such that both $F$ and $F^c$ are connected of rank $k$, a contradiction. We deduce that ${\rm cl}_M(F \cup e)$ is a connected flat of rank $k+1$. We complete the proof by establishing the contradication that  the complement of ${\rm cl}_M(F \cup e)$ is also connected of rank $k+1$. To see this, note that, for each element $g$ of $F^c$, all the elements apart from $e$ that are on the projective line containing $\{e,g\}$ in $PG(r(M) - 1,q)$ are in the complement of  ${\rm cl}_M(F \cup e)$. Thus this complement contains a set  of rank $k+1$ that is a union of interlocking $4$-circuits. Hence this complement is connected.
\end{proof}

\section{Connected hyperplanes}
\label{ch}

Kelmans~\cite{kel} and Seymour (in \cite{ox78})   independently established that if $M$ is a simple connected binary matroid that   has no cocircuits of size less than three, then $M$ has a connected hyperplane. That theorem was extended in several ways by McNulty and Wu~\cite{mwu}. In this section, we note two of these extensions and prove an analogue for ternary matroids of the result of Kelmans and Seymour. These results on connected hyperplanes will be crucial in proving our characterizations of binary and ternary comatroids.

We begin the section by identifying when there is a free element in a binary or ternary matroid, where an element $e$ is {\it free} in a matroid $M$ if $e$ is not a coloop of $M$ and the only circuits that contain $e$ are spanning. 
Doubtless, the results in the next lemma are known but we include the proofs for completeness. In a rank-zero matroid, every element is free. In a rank-one matroid, the  free elements are the non-loops unless the matroid has a coloop in which case there are no free elements. Thus the next result only considers  matroids $M$ of rank at least two noting that $e$ is free in $M$ if and only if $e$ is free in ${\rm si}(M)$  and $e$ is in no $2$-circuits of $M$.

\begin{lemma}
\label{freedom} 
Let $M$ be a simple $GF(q)$-representable matroid of rank at least two and let $e$ be a free element of $M$. Then 
\begin{itemize}
\item[(i)] $M$ is a circuit when $q =2$; and
\item[(ii)] when $q = 3$, either $M \cong U_{2,4}$, or $M$ can be obtained from a circuit $C$ containing $e$ by, for some subset $D$ of $C- e$, $2$-summing a copy of $U_{2,4}$ across each element of $D$.
\end{itemize}
\end{lemma}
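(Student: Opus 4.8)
The plan is to analyze the structure forced by having a free element $e$ in a simple $GF(q)$-representable matroid $M$ of rank at least two, working in a $GF(q)$-representation. The key observation is that ``$e$ is free'' means every non-spanning circuit avoids $e$, equivalently every circuit through $e$ spans $M$, equivalently every cocircuit that is a hyperplane complement avoids... more usefully: for each hyperplane $H$ of $M$ not containing $e$, the set $E(M) - H$ is a cocircuit containing $e$, and freeness says that the only circuits through $e$ are spanning. I would first record the easy case $r(M) = 2$ directly: then $M$ is a simple rank-2 matroid, i.e., a set of $k$ points on a line, a circuit when $k \le q+1$; freeness is automatic, and for $q = 2$ this forces $k \le 3$, so $M$ is a circuit (of size $2$ or $3$), while for $q = 3$ we get a $k$-point line with $k \le 4$, which is either a triangle ($= U_{2,3}$, a circuit) or $U_{2,4}$. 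This is the base of an induction on $r(M)$.

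For the inductive step I would pick an element $f \ne e$ and contract it. The main structural fact I would use is that freeness of $e$ is inherited under contraction after simplification: if $e$ is free in $M$ and $f \ne e$ is not parallel to $e$, then $e$ is free in $\mathrm{si}(M/f)$ — because a non-spanning circuit of $M/f$ through $e$ would lift to a non-spanning circuit of $M$ through $e$ (either the same set, or that set together with $f$), contradicting freeness. (One must handle the possibility that $f$ lies on a circuit with $e$; since $M$ is simple there are no $2$-circuits, and any circuit $\{e, f, \ldots\}$ is spanning, which is fine.) Then $\mathrm{si}(M/f)$ has rank $r(M) - 1 \ge 2$ if $r(M) \ge 3$, so by induction it is a circuit (when $q=2$) or one of the listed ternary matroids. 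The crux is then to reconstruct $M$ from this contraction: $M$ is a single-element ``coextension'' of $\mathrm{si}(M/f)$ (up to the parallel class issues from simplification), and I would argue that along the unique long circuit structure of $\mathrm{si}(M/f)$, the element $f$ can only be added back in a very constrained way, namely as a coloop-free extension point that either preserves the circuit (giving $M$ a circuit) or creates exactly a $U_{2,4}$ $2$-sum along one edge.

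Concretely, for $q = 2$: if $\mathrm{si}(M/f)$ is a circuit $C'$ on ground set $E(M) - f$, then in a binary representation the columns indexed by $C'$ sum to zero modulo the row for $f$; adding back $f$ as a (non-zero, non-parallel) column, the only way to keep $e$ free is that $f$'s column equals a partial sum of consecutive columns of the circuit $C'$, which exactly re-splits the circuit $C'$ into a longer circuit $C$ through $e$ — so $M$ is again a circuit. This uses the fact that binary matroids have no $U_{2,4}$ minor so the ``$U_{2,4}$-summing'' option never arises. For $q = 3$: $\mathrm{si}(M/f)$ is either $U_{2,4}$ — and then $M$ has rank $3$ and I would enumerate the simple rank-$3$ ternary single-element coextensions of $U_{2,4}$ in which a prescribed point is free, checking each is a $2$-sum of $U_{2,4}$'s along a triangle (this is a small finite check, possibly by hand using the fact that ternary matroids have no $U_{2,5}$ or $U_{3,5}$ minor) — or $\mathrm{si}(M/f)$ is obtained from a circuit $C'$ by $2$-summing $U_{2,4}$'s across a subset $D'$ of $C' - e$; and then re-adding $f$ either re-splits the underlying circuit $C'$ (possibly introducing $f$ as a new edge of the circuit, to which a $U_{2,4}$ may or may not already be attached) or lies inside one of the attached $U_{2,4}$'s, in both cases yielding a matroid of the claimed form.

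I expect the main obstacle to be the reconstruction step — controlling all simple single-element coextensions $M$ of the inductively-described matroid $N = \mathrm{si}(M/f)$ subject to the constraint that $e$ stay free in $M$. The bookkeeping is fiddly because of the interplay between (a) whether the new element $f$ merges into an existing parallel class of $N$ (so $M/f$ needed simplification) or sits alone, (b) whether $f$ ends up as a new edge of the circuit or inside an already-present $U_{2,4}$, and (c) ensuring $f$ is not a coloop and $e$ acquires no short circuit. I would manage this by phrasing $N$'s structure via its unique maximal circuit $C'$ (the ``spine'') together with the attached $U_{2,4}$'s, noting that in both $q=2$ and $q=3$ the spine is forced because long circuits are the only circuits through $e$, and then checking that the coextension either elongates the spine or is absorbed by one $U_{2,4}$ block. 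An alternative, perhaps cleaner, route is to avoid induction entirely: take the family of hyperplanes of $M$ avoiding $e$; since $e$ is free, the complements of these hyperplanes are precisely the cocircuits through $e$, and one can try to show directly that $M \ba e$ (or $M$) decomposes as a chain of $3$-sums / $2$-sums dictated by these cocircuits — but I suspect the inductive contraction argument, despite its casework, is the safer plan to write out.
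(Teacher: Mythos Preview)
Your inductive plan is a genuinely different route from the paper's, and it can be made to work, but it is far more laborious and you have not actually carried out the step you yourself flag as the crux. The paper's argument is a short direct matrix computation: take a spanning circuit $C_0 \ni e$ (which exists since $e$ is free and not a coloop), represent $M|C_0$ by $[I_r \mid \mathbf{1}]$ with $\mathbf{1}$ labelled $e$, and then analyse which further columns $f$ can be adjoined without creating a short circuit through $e$. Over $GF(2)$ any extra column forces a circuit through $e$ of size at most $r$, so $M = C_0$. Over $GF(3)$, a column with two nonzero entries of the same sign, or with three or more nonzero entries, yields a circuit through $e$ of size at most $r$; so each extra column has exactly two nonzero entries of opposite sign, and two such columns with overlapping supports again give a short circuit through $e$. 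Disjoint $\pm 1$ pairs are exactly the $U_{2,4}$'s $2$-summed onto elements of $C_0 - e$. No induction, no reconstruction.

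By contrast, your reconstruction step is only sketched, and in places the sketch is off. For $q=2$ the phrase ``$f$'s column equals a partial sum of consecutive columns of the circuit $C'$'' has no meaning, since a circuit has no linear order; what you actually need is the following. From $\mathrm{si}(M/f)$ a circuit $C'$ of size $r$, the set $C'$ cannot itself be a circuit of $M$ (it would be non-spanning through $e$), so $C' \cup f$ is a spanning circuit of $M$; any leftover element $g$ (one that became parallel in $M/f$) lies in a triangle $\{f,g,h\}$ with $h \in C' - e$, and then the binary symmetric difference $(C'\cup f)\,\triangle\,\{f,g,h\} = (C' - h)\cup g$ is a circuit of size $r$ through $e$, contradicting freeness. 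For $q=3$ the analogous uncontraction analysis, starting from a circuit with several attached $U_{2,4}$'s, splits into many subcases (did $f$ sit on the spine, inside an attached $U_{2,4}$, or on a basepoint; which elements became parallel) that you have not worked through. All of this is avoided by the paper's two-paragraph linear-algebra argument.
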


\begin{proof}
Since $e$ is free in $M$,    there is a spanning circuit $C_0$ of $M$ containing $e$. Then $M|C_0$ is represented over $GF(q)$ by $[I_r| {\bf 1}]$ where ${\bf 1}$,  the column of all ones,  is labelled by $e$. When $q= 2$, we cannot add any further elements without creating either a $2$-circuit, or a circuit that contains $e$ and has fewer than $r+ 1$ elements. Thus (i) holds. 

Now suppose that $q = 3$. If $r(M) = 2$, then $M$ is isomorphic to $U_{2,3}$ or $U_{2,4}$. Assume that $r(M) \ge 3$. Let $Z$ be a matrix representing $M$ over $GF(3)$ and having $[I_r| {\bf 1}]$ as its first $r+1$ columns. We will write the elements of $GF(3)$ as $0,1$, and $-1$. Let $f$ be a column of $Z$ other than one of the first $r+1$ columns.  As $M$ is simple, $f$ has at least two non-zero entries. If $f$ has two  non-zero entries with a common sign, then there is a circuit containing $\{e,f\}$ having at most $r$ elements, a contradiction. It follows that $f$ has exactly two non-zero entries and that these entries have different signs. If columns $f$ and $g$ have their non-zero entries   in, respectively, rows $1$ and $2$, and rows $1$ and $3$, then $M$ has an $r$-element circuit containing $\{e,f,g\}$. We conclude that two distinct columns of $Z$ that are not columns of $[I_r| {\bf 1}]$ must have disjoint  sets of rows containing their non-zero entries. It follows that $M$ can be obtained from a circuit $C$ containing $e$ by, for some subset $D$ of $C- e$, $2$-summing a copy of $U_{2,4}$ across each element of $D$. To see this, for each column of $Z$ with  two entries of opposite signs, add an additional column to $Z$ obtained by changing the sign of one of these entries. These added elements form the basepoints of the 2-sums.
\end{proof}

The following technical result will be helpful in proving our results on connected hyperplanes.

\begin{lemma}
\label{key}
In a simple connected matroid $M$, let $e$  be an element and $A$ be a maximal   subset of $E(M)$ that is connected, non-spanning and contains $e$. Let $C$ be a circuit of $M$ that meets both $A$ and $E(M) - A$ such that $C-A$ is  minimal. Then $A$ is a flat, $r(A \cup C) = r(M)$, the set $C-A$ is a series class of  $M|(A \cup C)$, 
$$r(M)  = r(A) + |C - A| - 1,$$ 
and one of the following holds:
\begin{itemize}
\item[(i)] $A$ is a connected hyperplane of $M$; or 
\item[(ii)] $C- A$ is a series class of $M$ with at least three elements; or 
\item[(iii)] $|C- A| \ge 3$ and $E(M) - (A \cup C)$ is non-empty.
\end{itemize}
\end{lemma}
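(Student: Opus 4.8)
The plan is to analyze the structure forced by the maximality of $A$ and the minimality of $C-A$, working with the connectivity of $M|(A\cup C)$ first and then pushing the conclusions out to all of $M$. First I would show $A$ is a flat: if some element $g$ were in $\mathrm{cl}_M(A)\setminus A$, then $A\cup g$ would be connected (as $g$ lies in a circuit inside $A\cup g$), non-spanning (same closure as $A$), and strictly larger, contradicting maximality. Next, I would establish that $C-A$ is a series class of $M|(A\cup C)$ and that $r(A\cup C)=r(M)$. For the rank claim: $A$ is non-spanning, so $A\cup C$ must reach full rank, because otherwise $A\cup C$ would be a connected (it contains the circuit $C$ meeting $A$, and $A$ is connected, so $A\cup C$ is connected), non-spanning superset of $A$ containing $e$ — contradicting maximality unless $A\cup C = A$, impossible since $C\not\subseteq A$. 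So $r(A\cup C)=r(M)$. For the series-class claim: take any element $f\in C-A$. Since $C-A$ was chosen with $C-A$ minimal among circuits meeting both sides, I would argue every element of $C-A$ lies in the same series class of $M|(A\cup C)$ — if some $f\in C-A$ were in a different series class, one could find a circuit $C'$ of $M|(A\cup C)$ avoiding $f$ but still meeting both $A$ and its complement, contradicting minimality of $C-A$; and conversely no element of $A$ can be in series with elements of $C-A$ because $A$ is connected and spans a hyperplane-sized flat while the series class $C-A$ is ``hanging off'' it. Combining, $C-A$ is exactly a series class of $M|(A\cup C)$, and then the rank count $r(M)=r(A\cup C)=r(A\cup(C-A))=r(A)+|C-A|-1$ follows because contracting all but one element of the series class $C-A$ from $M|(A\cup C)$ gives a matroid of rank $r(A)+1$ in which $A$ is still a flat (a hyperplane), so $r(A\cup C)-(|C-A|-1)=r(A)+1$.

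With these structural facts in hand, the trichotomy is a case split on how $C-A$ sits inside $M$. If $|C-A|=2$, then since $C-A$ is a series class of $M|(A\cup C)$ and $r(A\cup C)=r(M)=r(A)+1$, the set $A$ is a hyperplane of $M|(A\cup C)$; I would then show $E(M)=A\cup C$, because any further element $h$ would either extend $A$ (violating maximality, as $\mathrm{cl}_M(A\cup h)$ would still be a connected non-spanning set strictly containing $A$ unless it is spanning, in which case $h\notin\mathrm{cl}(A)$ and $A\cup h$ is connected non-spanning — contradiction again) or create complications resolved by maximality of $A$; once $E(M)=A\cup C$, the hyperplane $A$ of $M$ is connected, giving case~(i). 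If $|C-A|\ge 3$, then either $C-A$ remains a series class in all of $M$ — case~(ii), noting it has at least three elements — or it does not, meaning some element of $C-A$ lies in a larger series class of $M$ or in a circuit of $M$ not contained in $A\cup C$; in that situation there must be an element of $M$ outside $A\cup C$, which is exactly case~(iii).

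The main obstacle I anticipate is the careful bookkeeping in the $|C-A|=2$ case: proving that $E(M)=A\cup C$ from the maximality of $A$ requires ruling out several ways an extra element could attach, and one has to use connectivity of $M$ together with the fact that $A$ is already a flat of corank one in $M|(A\cup C)$. A secondary delicate point is verifying that $C-A$ is a single series class of $M|(A\cup C)$ rather than merely being contained in one — this is where the minimality hypothesis on $C-A$ does the real work, and I would phrase it as: if $f,f'\in C-A$ lay in distinct series classes, then by the definition of series class there is a cocircuit of $M|(A\cup C)$ containing $f$ but not $f'$, whose complement is a hyperplane $H$; then $M|(A\cup C)|H$ or a circuit within it would yield a circuit meeting both $A$ and its complement with fewer elements outside $A$, contradicting minimality. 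Once that is pinned down, the rank formula and the trichotomy follow cleanly.
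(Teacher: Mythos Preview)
Your overall structure is sound and matches the paper's approach, but there is a genuine error in the $|C-A|=2$ case. You write that $A$ is a hyperplane of $M|(A\cup C)$ and then set out to prove $E(M)=A\cup C$. That claim is false in general: take $M=M(K_4)$, let $A$ be a triangle (a maximal connected non-spanning flat), and let $C$ be a $4$-circuit with $|C-A|=2$; then the sixth edge of $K_4$ lies in $E(M)-(A\cup C)$. The fix is that you do not need $E(M)=A\cup C$ at all. You already proved that $A$ is a flat of $M$ itself (not merely of $M|(A\cup C)$), and from the rank formula you have $r(A)=r(M)-1$. A flat of $M$ of rank $r(M)-1$ is, by definition, a hyperplane of $M$; since $A$ is connected, case~(i) follows immediately. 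The paper's proof does exactly this in one line.

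For the remaining trichotomy when $|C-A|\ge 3$, the split is simply whether $E(M)=A\cup C$ or not: if yes, then $C-A$ is a series class of $M|(A\cup C)=M$, giving~(ii); if no, you have~(iii). Your phrasing in terms of whether $C-A$ ``remains a series class in all of $M$'' is equivalent but roundabout. Also, your argument that $C-A$ is a series class of $M|(A\cup C)$ can be streamlined: any circuit $D$ of $M|(A\cup C)$ meeting $C-A$ must also meet $A$ (since $C-A\subsetneq C$ is independent), and then minimality of $|C-A|$ forces $D-A\supseteq C-A$, so every such $D$ contains $C-A$; hence the elements of $C-A$ are pairwise in series, and they form a full series class because (for $|A|\ge 2$) each element of $A$ lies in a circuit inside the connected set $A$.
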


\begin{proof}
The  minimality of $C-A$ implies that, in $M|(A \cup C)$, a circuit that meets $C-A$ must contain $C-A$, so $C - A$ is a series class. The maximality of $A$ 
implies that $A$ is a flat of $M$ and that $r(A \cup C) = r(M)$. Thus $r(M)  = r(A) + |C - A| - 1$, so $|C-A| \ge 2$. If  $|C-A| = 2$, then $A$ is a connected hyperplane of $M$. Thus we may assume that $|C-A| \ge 3$. In that case, (ii) or (iii) holds.
\end{proof}

The next result extends the theorem of Kelmans and Seymour, borrowing much from Seymour's proof.

\begin{theorem} 
\label{mainbin}
Let $e$ be an element of a  simple connected binary matroid $M$. Then 
\begin{itemize}
\item[(i)] $M$ is a circuit; or
\item[(ii)] $M$ has a connected hyperplane containing $e$; or 
\item[(iii)] $M$ has a series class of size at least three that avoids $e$.
\end{itemize}
\end{theorem}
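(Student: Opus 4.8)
The plan is to follow Seymour's strategy for the Kelmans--Seymour theorem, using Lemma~\ref{key} as the engine and inducting on $|E(M)|$. We may assume $r(M)\ge 3$ and that $M$ is not a circuit, the cases of smaller rank being routine (when $r(M)=2$, $M$ is the $3$-circuit $U_{2,3}$); we shall produce either a connected hyperplane containing $e$ or a series class of size at least three avoiding $e$.

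As $r(M)\ge 2$, the set $\{e\}$ is connected and non-spanning, so there is a maximal connected, non-spanning subset $A$ of $E(M)$ with $e\in A$. Since $M$ is connected and $A\neq E(M)$, some circuit meets both $A$ and $E(M)-A$; choose one, $C$, with $C-A$ minimal, and apply Lemma~\ref{key}. In outcome~(i) of that lemma, $A$ is a connected hyperplane containing $e$, so (ii) holds. In outcome~(ii), the set $C-A$ is a series class of $M$ of size at least three, and it avoids $e$ since it is disjoint from $A$, so (iii) holds. In outcome~(iii), if $C-A$ is a series class of $M$ then again (iii) holds; so assume it is not. Thus, writing $D:=E(M)-(A\cup C)$, we are left with the configuration: $D\neq\emptyset$; the set $C-A$ has size $k\ge 3$ and is a series class of $M\ba D$ but not of $M$; $A$ is a flat of $M$ with $r(A)=r(M)-k+1\le r(M)-2$; and $A\cup C$ spans $M$.

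To dispatch this configuration I would induct on $|E(M)|$: pass to a strictly smaller simple connected binary matroid $M'$, apply the theorem to $(M',e)$, and lift. Two reductions are available --- deleting an element of $D$, or contracting an element of the series class $C-A$ and simplifying --- and, since $M$ is connected with at least two elements, at least one can be performed so that $M'$ is connected; one should also choose it so that $e$ is not absorbed into a non-trivial parallel class during any simplification (or else track that class). A connected hyperplane $H'$ of $M'$ containing $e$ lifts to one of $M$ containing $e$: the cocircuit $E(M')-H'$ is either a cocircuit of $M$ or becomes one after adjoining the removed element, and in either case the closure in $M$ of $H'$, together possibly with that element, is a connected flat of rank $r(M)-1$ containing $e$. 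If $M'$ is a circuit, then $M$ is obtained from a circuit by a single simple binary extension or coextension, which can be analyzed directly using the fact that a circuit and a cocircuit of a binary matroid always meet in an even set. If $M'$ has a series class of size at least three avoiding $e$, one argues that a sub-class of it of size at least three is still such a series class of $M$, or else re-chooses the reduction.

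The step I expect to be the main obstacle is exactly outcome~(iii) of Lemma~\ref{key} with $C-A$ local to $M\ba D$: the extra elements $D$ both spoil the series class $C-A$ of $M\ba D$ and can, a priori, destroy every candidate connected hyperplane of $M$. Choosing the inductive reduction so that removing one element simultaneously preserves connectivity, does not shift the critical series class onto $e$, and leaves enough structure to reconstruct a connected hyperplane is the delicate point, and it is here that binarity enters essentially --- through the parity of circuit--cocircuit intersections and the non-existence of longer uniform matroids.
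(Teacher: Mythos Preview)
Your setup matches the paper's: pick a maximal connected non-spanning set $A\ni e$, choose a circuit $C$ meeting $A$ and its complement with $C-A$ minimal, and apply Lemma~\ref{key}. The divergence is in how you handle outcome~(iii). You propose induction on $|E(M)|$ via a deletion or a contraction, but you have not actually carried out the inductive step, and you say as much in your final paragraph. The specific claims that need proof are exactly the ones you wave at: that a connected hyperplane of $M'$ through $e$ lifts to one of $M$ (for deletion $M'=M\backslash d$, the hyperplane of $M$ you recover is $H'$ or $H'\cup\{d\}$, and in the second case you must show $d$ is not a coloop of $M|(H'\cup d)$; for contraction the issue is dual), and that a large series class of $M'$ avoiding $e$ survives in $M$. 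Neither is automatic, and you give no argument for either. So as written this is a plan, not a proof.

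The paper avoids induction entirely with a two-line binary argument that you are missing. In outcome~(iii), fix $y\in C_1-A$, take a basis $B\subseteq A\cup(C_1-y)$, pick $x\in E(M)-(A\cup C_1)$, and let $C_2=C(x,B)$. If $C_2$ missed $A$ then $C_1\triangle C_2$ would contain a circuit meeting $A$ with smaller overhang, so $C_2$ meets $A$; minimality then forces $C_2-A=((C_1-A)-y)\cup\{x\}$, whence $(C_1\triangle C_2)-A=\{x,y\}$. Since $M$ is binary, $C_1\triangle C_2$ is a disjoint union of circuits; the circuit $D$ through $x$ must meet $A$ (else $D\subseteq\{x,y\}$, impossible as $M$ is simple), and $|D-A|\le 2<|C_1-A|$, contradicting the choice of $C_1$. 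This is where binarity enters, cleanly and once; no lifting, no case analysis of $M'$ being a circuit, no tracking of parallel classes.
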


\begin{proof} Assume that the theorem fails.    By Lemma~\ref{freedom}, $e$ is not free in $M$.  
Thus $M$ has a subset $A$ that contains a circuit containing $e$ and is maximal with respect to being connected and non-spanning. We take a circuit $C_1$ that meets both $A$ and its complement such that $|C_1 -A|$ is minimal. By Lemma~\ref{key},  $|C_1 - A| \ge 3$ and  
$E(M) - (C_1 \cup A)$ contains an element, say $x$. Moreover, for $y$ in $C_1 - A$, the set $A \cup (C_1-y)$ contains a basis $B$ of $M$. Let  $C_2 = C(x,B)$. Then $C_2$ meets $A$ otherwise, as $M$ is binary, $C_1 \triangle C_2$ is a circuit that contradicts the choice of $C_1$. Now $|C_2 -A| \ge |C_1 - A|$. Hence $C_2$ contains exactly $|C_1 - A| - 1$ elements of $C_1 - A$. Thus $C_1 \triangle C_2$ contains a circuit $D$ that contains $x$, meets $A$, and has exactly two elements not in $A$. Then $D$ contradicts the choice of $C_1$.  
\end{proof}

As an immediate consequence of this theorem, we have the following.

\begin{corollary} 
\label{mainbin2}
Let  $M$ be a   simple connected binary matroid $M$. Then 
\begin{itemize}
\item[(i)] $M$ is a circuit; or
\item[(ii)] $M$ has a connected hyperplane; or 
\item[(iii)] $M$ has at least two distinct series classes of size at least three.
\end{itemize}
\end{corollary}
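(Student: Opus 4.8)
The plan is to derive Corollary~\ref{mainbin2} directly from Theorem~\ref{mainbin} by applying that theorem to two carefully chosen elements and using the fact that distinct series classes of a matroid are disjoint. I would argue by contradiction: suppose $M$ is a simple connected binary matroid for which the corollary fails, so that $M$ is not a circuit and $M$ has no connected hyperplane at all.

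First I would pick an arbitrary element $e_1$ of $M$ and apply Theorem~\ref{mainbin} with $e = e_1$. Outcome (i) of that theorem is excluded because $M$ is not a circuit, and outcome (ii) is excluded because $M$ has no connected hyperplane whatsoever, hence certainly none containing $e_1$. Therefore outcome (iii) must hold, giving a series class $S_1$ of $M$ with $|S_1| \ge 3$ and $e_1 \notin S_1$. Next I would choose an element $e_2 \in S_1$ — possible since $|S_1| \ge 3$ — and apply Theorem~\ref{mainbin} again, now with $e = e_2$. By exactly the same reasoning, outcomes (i) and (ii) are excluded, so $M$ has a series class $S_2$ with $|S_2| \ge 3$ and $e_2 \notin S_2$. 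Since the series classes of $M$ partition its ground set and $e_2$ lies in $S_1$ but not in $S_2$, the classes $S_1$ and $S_2$ are distinct. Thus $M$ has (at least) two distinct series classes of size at least three, contradicting the assumed failure of outcome (iii) of the corollary; this completes the argument.

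I do not anticipate any genuine obstacle here, since the corollary is essentially a two-fold iteration of Theorem~\ref{mainbin}. The only points that need a word of care are that the global absence of a connected hyperplane rules out Theorem~\ref{mainbin}(ii) for every particular choice of $e$ (not just for a convenient one), and that the two series classes produced are necessarily different because $S_2$ is extracted from an application of the theorem to an element $e_2$ that was deliberately taken inside $S_1$.
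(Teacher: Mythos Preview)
Your argument is correct and is exactly the natural two-step application of Theorem~\ref{mainbin} that the paper has in mind; the paper itself simply states that the corollary is an immediate consequence of that theorem without writing out the details.
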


The next two results of McNulty and Wu~\cite[Theorem~1.4 and Lemma~2.10]{mwu} are much more substantial extensions of  the theorem of Kelmans and Seymour. Both of these results will be used in the proof of Theorem~\ref{main_binary}.

\begin{theorem}
\label{mwu_result}
Let $M$ be a simple connected binary matroid with no cocircuits of size less than three. Then every element of $M$ is in at least two connected hyperplanes; and $M$ has at least four connected hyperplanes. 
\end{theorem}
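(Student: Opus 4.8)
My plan is to deduce this from the base case of Kelmans--Seymour (a simple connected binary matroid with all cocircuits of size at least three has a connected hyperplane) together with the observation, recorded in Theorem~\ref{mainbin} and Corollary~\ref{mainbin2}, that either the matroid is a circuit, or it has a connected hyperplane containing a prescribed element~$e$, or it has a series class of size at least three avoiding~$e$. The hypothesis that $M$ has no cocircuits of size less than three means $M$ has no series classes of size one, but it does not rule out series classes of size two or more; however, a series class of size at least three gives a small cocircuit in $M^*$, not in $M$, so I must be careful about which object the hypothesis constrains. Since the statement is quoted from McNulty--Wu, the cleanest route is: first show every element lies in a connected hyperplane (apply Theorem~\ref{mainbin} to $e$; rule out the ``circuit'' alternative since a circuit of rank $r\ge 2$ has two-element cocircuits, contradicting the hypothesis when $r(M)\ge 3$, and handle $r(M)\le 2$ separately; rule out alternative~(iii) because a series class of size at least three in $M$ forces, in the presence of connectivity, a cocircuit structure one can analyze directly, or simply quote that (iii) is impossible under the no-small-cocircuit hypothesis after noting series classes of size $\ge 3$ do not produce cocircuits of size $<3$ but do produce an element in a connected hyperplane by a second application).

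The sharper claim that every element is in \emph{at least two} connected hyperplanes, and that there are at least four connected hyperplanes in total, requires an extra idea. Having found one connected hyperplane $H_1$ containing $e$, I would pass to $M/f$ or $M\ba g$ for a suitable non-loop $g\in E(M)-H_1$, or better, contract along the cocircuit complementary to $H_1$: the key point is that $H_1$ together with a two-element cocircuit meeting $E(M)-H_1$ cannot exhaust all connected hyperplanes through $e$, because otherwise one can locate a second maximal connected non-spanning set through $e$ extending in a different direction and reapply Lemma~\ref{key}. Concretely, I would take $A=H_1$, pick a circuit $C$ with $C-A$ minimal as in Lemma~\ref{key}; since $H_1$ is already a hyperplane we are in case (i), and then consider a \emph{different} element $e'\ne e$ in $H_1$ and a maximal connected non-spanning set $A'\ni e$ chosen so that $A'\ne H_1$ (possible unless $H_1$ is the unique hyperplane through $e$, which forces $E(M)-H_1$ to be a single point, contradicting no cocircuits of size $<3$). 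This yields a second connected hyperplane through $e$. The count of four connected hyperplanes then follows by applying the ``two hyperplanes through each element'' statement to two elements lying in no common connected hyperplane, or by a short direct argument using that the connected hyperplanes through distinct well-chosen elements cannot all coincide.

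The main obstacle I anticipate is the bookkeeping in the multiplicity claim: ensuring that when I produce a ``second'' connected hyperplane it is genuinely distinct from the first, and that the no-small-cocircuit hypothesis is doing exactly the work needed to block the degenerate configurations (a point not covered by a hyperplane, a hyperplane covering all but one point). Since this result is cited verbatim from McNulty and Wu~\cite[Theorem~1.4 and Lemma~2.10]{mwu}, I would in fact simply invoke it, but the sketch above indicates how it follows from the circle of ideas around Theorem~\ref{mainbin} and Lemma~\ref{key} developed here.
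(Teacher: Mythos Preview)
The paper does not prove Theorem~\ref{mwu_result}; it is quoted from McNulty and Wu~\cite{mwu} and the authors explicitly describe it as a ``much more substantial extension'' of the Kelmans--Seymour theorem than Theorem~\ref{mainbin}. Your closing remark that one should simply invoke the reference therefore matches exactly what the paper does.

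That said, the sketch you offer in lieu of a citation has a genuine error and a genuine gap. The error is your handling of series classes: you write that the hypothesis ``no cocircuits of size less than three'' rules out series classes of size one but permits larger ones, and that a series class of size at least three gives a small cocircuit in $M^*$ rather than in $M$. This is backwards. Two elements are in series in $M$ precisely when they form a $2$-cocircuit of $M$, so a nontrivial series class in $M$ forces $2$-cocircuits in $M$ itself. Under the hypothesis of Theorem~\ref{mwu_result} every series class is trivial, and alternatives~(i) and~(iii) of Theorem~\ref{mainbin} are ruled out immediately; this part is actually easier than you make it.

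The real gap is in the passage from one connected hyperplane through $e$ to two, and from there to four in total. Your proposed mechanism---choose a maximal connected non-spanning set $A'\ni e$ with $A'\neq H_1$---does not produce a second \emph{connected} hyperplane: nothing prevents every maximal such $A'$ from closing up to $H_1$ again, and ``$E(M)-H_1$ has more than one point'' only tells you there are other hyperplanes through $e$, not other connected ones. Likewise, even granting two connected hyperplanes through every element, three connected hyperplanes in total with each element in at least two of them is not excluded by the counting you suggest. The McNulty--Wu argument requires additional structural work beyond Lemma~\ref{key} and Theorem~\ref{mainbin}, which is why the paper cites it rather than deriving it.
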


\begin{lemma}
\label{mcnulty_wu_lemma2.10}
Let $M$ be a $3$-connected binary matroid with at least four elements. Then, for any two distinct elements $e$ and $f$ of $M$, there is a connected hyperplane containing $e$ and avoiding $f$.
\end{lemma}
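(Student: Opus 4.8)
The plan is to prove the statement by induction on $|E(M)|$, after recording some easy facts. Since $M$ is $3$-connected with at least four elements it is simple, has no cocircuit of size less than three, and has rank at least three; also a short check shows that there is no $3$-connected binary matroid with four or five elements and that the only one with six elements is $M(K_4)=M(W_3)$. We use throughout the reformulation that a connected hyperplane of $M$ containing $e$ and avoiding $f$ is precisely a cocircuit $C^{*}$ with $f\in C^{*}$, $e\notin C^{*}$ and $M\backslash C^{*}$ connected.

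For the base case I would treat wheels directly. If $M=M(W_{n})$ with $n\ge 3$, write the rim as $v_{1}\cdots v_{n}$, the spokes as $s_{1},\dots,s_{n}$ and the rim edges as $r_{1},\dots,r_{n}$. Deleting any bond of $W_{n}$ other than the bond of all spokes or a triad $\{r_{i-1},r_{i},s_{i}\}$ at a rim vertex leaves a rim arc of positive length with its spokes deleted and detached from the rest of the graph, so the resulting restriction is disconnected; hence the connected hyperplanes of $M(W_{n})$ are exactly the rim $R=\{r_{1},\dots,r_{n}\}$ and the sets $H_{i}=E(M)-\{r_{i-1},r_{i},s_{i}\}$. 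A short case analysis on whether $e$ and $f$ are spokes or rim edges now produces one of $R,H_{1},\dots,H_{n}$ containing $e$ and avoiding $f$; the only point needing a word is when $f=r_{j}$ is a rim edge, where for $n\ge 3$ the sets $\{r_{j-1},s_{j}\}$ and $\{r_{j+1},s_{j+1}\}$ are disjoint, so $e$ misses one of them and the corresponding $H_{j}$ or $H_{j+1}$ works.

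For the inductive step, suppose $M$ is not a wheel; as whirls are not binary, $M$ is neither a wheel nor a whirl, so $|E(M)|\ge 7$. By Tutte's Wheels-and-Whirls Theorem, supplemented by the (standard) fact that such an $M$ has enough removable elements to permit one to be chosen outside the pair $\{e,f\}$, there is $g\in E(M)-\{e,f\}$ such that $M\backslash g$ or $M/g$ is $3$-connected; in either case the resulting minor is binary with at least six elements, so induction applies to it. Suppose first that $M\backslash g$ is $3$-connected, and let $H_{0}$ be a connected hyperplane of $M\backslash g$ with $e\in H_{0}$, $f\notin H_{0}$. Then ${\rm cl}_{M}(H_{0})$ is $H_{0}$ or $H_{0}\cup g$; either way it is a hyperplane of $M$ containing $e$ and, since $g\neq f$, avoiding $f$, and it is connected, because if it equals $H_{0}\cup g$ then $g\in{\rm cl}_{M}(H_{0})$ lies in a circuit (of size at least three, as $M$ is simple) contained in $H_{0}\cup g$, so adjoining $g$ to the connected matroid $M|H_{0}$ keeps it connected. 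Suppose instead that $M/g$ is $3$-connected, and let $K_{0}$ be a connected hyperplane of $M/g$ with $e\in K_{0}$, $f\notin K_{0}$; then $K_{0}$ has rank $r(M)-1$ or $r(M)-2$ in $M$. If $r_{M}(K_{0})=r(M)-1$ then ${\rm cl}_{M}(K_{0})=K_{0}\cup g$ is a hyperplane of $M$ containing $e$, avoiding $f$, and connected by the elementary observation that if $N$ is a matroid with $N/g$ connected and $g$ neither a loop nor a coloop of $N$ then $N$ is connected (otherwise $N=N_{1}\oplus N_{2}$ with $g\in N_{1}$, and $|E(N_{1})|\ge 2$ forces $N/g=(N_{1}/g)\oplus N_{2}$ to be disconnected), applied to $N=M|(K_{0}\cup g)$. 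If $r_{M}(K_{0})=r(M)-2$ then $K_{0}$ is a connected flat of $M$ of corank two with $e\in K_{0}$, $f\notin K_{0}$; since $M$ is binary, ${\rm si}(M/K_{0})$ has rank two, so the hyperplanes of $M$ containing $K_{0}$ are exactly the sets $K_{0}\cup P_{i}$ for the two or three parallel classes $P_{i}$ of $M/K_{0}$. As each $E(M)-(K_{0}\cup P_{i})$ is a cocircuit of $M$, hence has at least three elements, at most one $P_{i}$ is a singleton; so we may pick $i$ with $f\notin P_{i}$ and $|P_{i}|\ge 2$, and then no element of $P_{i}$ is a coloop of $M|(K_{0}\cup P_{i})$, so $M|(K_{0}\cup P_{i})$ is connected and is the required hyperplane.

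The main obstacle is the contraction case, and within it the sub-case $r_{M}(K_{0})=r(M)-2$: here the connected hyperplane of $M/g$ does \emph{not} lift to a hyperplane of $M$, and one must instead reconstruct a connected hyperplane of $M$ from the corank-two flat $K_{0}$ — and it is precisely here that one uses $3$-connectivity of $M$, via the absence of cocircuits of size at most two. A secondary point that must be dealt with is securing a removable element $g$ outside $\{e,f\}$, since Tutte's theorem by itself yields only one removable element; this needs the refinement on the number of removable elements of a $3$-connected binary matroid that is not a wheel (equivalently, a direct treatment of the — in fact vacuous — list of small non-wheel exceptions).
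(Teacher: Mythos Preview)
The paper does not supply its own proof of this lemma: it is quoted verbatim from McNulty and Wu \cite[Lemma~2.10]{mwu} and used as a black box. So there is no ``paper's proof'' to compare your argument against; McNulty and Wu's original proof is built on their machinery for connected hyperplanes in binary matroids rather than on an induction through Tutte's Wheels-and-Whirls Theorem.

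Your inductive scheme is sound in all cases \emph{once the removable element $g$ is secured outside $\{e,f\}$}. The deletion case and both subcases of the contraction case are handled correctly; in particular, your treatment of the corank-two situation (choosing a parallel class $P_i$ of $M/K_0$ with $f\notin P_i$ and $|P_i|\ge 2$, using that $M$ has no small cocircuits) is clean, and your direct verification for wheels is accurate.

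The one genuine gap is the point you yourself flag: Tutte's theorem gives only \emph{one} element $g$ with $M\backslash g$ or $M/g$ $3$-connected, and you need it to avoid two prescribed elements. Calling the required strengthening ``standard'' and the list of exceptions ``vacuous'' is not a proof. If $g$ is forced to equal $e$ or $f$, your argument breaks: for instance, when $g=f$ and $M\backslash f$ is $3$-connected, every connected hyperplane of $M\backslash f$ that you obtain by induction might have $f$ in its $M$-closure, and when $g=f$ and $M/f$ is $3$-connected, the hyperplane $K_0\cup f$ you build necessarily contains $f$. You need either a specific reference guaranteeing at least three Tutte-removable elements in a $3$-connected binary non-wheel, or a separate argument disposing of the cases $g\in\{e,f\}$. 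Without that, the induction does not close.
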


McNulty and Wu~\cite[Fig. 1]{mwu} also showed that a simple connected binary matroid with no cocircuits of size less than three may have exactly four connected hyperplanes. In addition, they  noted that Joseph Bonin has pointed out that the dual of $PG(2,3)$ is a $3$-connected ternary matroid  having no connected hyperplanes. Of course, the same is true for the duals of all of the matroids $PG(r-1,3)$ with $r \ge 3$. As another example 
of a simple connected ternary matroid with no cocircuits of size less than three and no connected hyperplanes, 
 take a circuit with at least three elements and 2-sum a copy of $U_{2,4}$ across each element. Each of these examples has numerous triads. As we shall see, by confining our attention to simple connected ternary matroids having no cocircuits of size less than four, we can establish the existence of at least two connected hyperplanes.  The next result is key to proving this.

\begin{theorem} 
\label{maintern}
Let $M$ be a simple connected matroid having no cocircuits of size less than four. Assume that $M$ has no $U_{2,5}$-minor and no $U_{3,5}$-minor. Let $e$ be an element of $M$ that is not free. Then $M$ has a connected hyperplane containing $e$.
\end{theorem}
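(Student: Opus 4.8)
The plan is to follow the strategy of Theorem~\ref{mainbin} (hence, ultimately, Seymour's argument) via Lemma~\ref{key}, bringing in the hypotheses that $M$ has no $U_{2,5}$- and no $U_{3,5}$-minor only at the single point where the binary proof uses the fact that the symmetric difference of two circuits is a disjoint union of circuits. Suppose the theorem fails. Since $e$ is not free and $M$ is connected, some non-spanning circuit of $M$ contains $e$, so $M$ has a subset $A$ that contains a circuit containing $e$ and is maximal with respect to being connected and non-spanning; in particular, $e\in A$. Let $C$ be a circuit of $M$ meeting both $A$ and $E(M)-A$ for which $|C-A|$ is minimum, and apply Lemma~\ref{key} to $A$, $e$, and $C$.

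Outcome (i) of Lemma~\ref{key} is impossible, as it would make $A$ a connected hyperplane of $M$ containing $e$. Outcome (ii) is impossible too: it would yield distinct elements $a$ and $b$ of $M$ such that every circuit of $M$ meeting $\{a,b\}$ contains both, and then, were $\{a,b\}$ to contain no cocircuit of $M$, the set $E(M)-\{a,b\}$ would span $M$, giving a circuit through $a$ that avoids $b$, a contradiction; hence $M$ would have a cocircuit of size at most two, contrary to hypothesis. Thus outcome (iii) holds: $|C-A|\ge 3$, and $E(M)-(A\cup C)$ contains an element $x$. Lemma~\ref{key} also gives that $A$ is a flat and $r(A\cup C)=r(M)=r(A)+|C-A|-1$; since $C$ is a circuit, it follows that $(M/A)|(C-A)\cong U_{|C-A|-1,\,|C-A|}$, a circuit of $M/A$.

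We now run the analogue of Seymour's circuit-elimination step. Fix $y\in C-A$. As $y$ is not a coloop of $M|(A\cup C)$, the set $A\cup(C-y)$ spans $M$; choose a basis $B$ of $M$ with $B\subseteq A\cup(C-y)$ and let $C_2=C(x,B)$ be the fundamental circuit of $x$ with respect to $B$. Then $C_2-A\subseteq(C-A-y)\cup\{x\}$, so $|C_2-A|\le|C-A|$, while $C_2$ meets $E(M)-A$ because $x\in C_2$; moreover, $C_2$ must contain an element of $C-A-y$, since otherwise $x$ would lie in the closure of the flat $A$. In the binary proof one now forms $C\triangle C_2$ and, using that it is a disjoint union of circuits, produces a circuit that meets both $A$ and $E(M)-A$ but has fewer than $|C-A|$ elements outside $A$, contradicting the choice of $C$. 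Here, since $M$ need not be binary, the plan is instead to apply strong circuit elimination to $C$ and $C_2$ across a common element of $C-A-y$ and, using the uniform structure $(M/A)|(C-A)\cong U_{|C-A|-1,\,|C-A|}$ together with the absence of $U_{2,5}$- and $U_{3,5}$-minors, to show that the resulting circuit --- or a further circuit obtained from it by one more elimination --- again meets both $A$ and $E(M)-A$ yet has fewer than $|C-A|$ elements outside $A$, and so contradicts the minimality of $|C-A|$. The hypotheses on $U_{2,5}$ and $U_{3,5}$ serve to exclude the only alternative, namely that every circuit through $x$ produced in this way lies deep inside ${\rm cl}_M(C-A)$: combined with a suitable element or two of $A$, such a configuration would contain a $U_{2,5}$- or a $U_{3,5}$-minor.

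The main obstacle is precisely this last step: converting the binary fact ``the symmetric difference of two circuits is a disjoint union of circuits'' into an argument valid under the weaker hypothesis. What has to be controlled is the way a circuit through the external element $x$ can meet the uniform restriction $(M/A)|(C-A)$; when that intersection is too large one is forced to exhibit one of the two forbidden uniform minors, and otherwise one must extract the shorter circuit needed for the contradiction. I expect the dichotomy according to whether or not $C_2$ meets $A$, and the small cases such as $|C-A|=3$ (and perhaps $|C-A|=4$), to require separate treatment, and that an auxiliary lemma on the intersection patterns of circuits in matroids with no $U_{2,5}$- and no $U_{3,5}$-minor --- playing the role modularity plays in the binary argument --- will be the technical heart of the proof.
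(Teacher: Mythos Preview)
Your setup is correct and matches the paper: one takes $A$ maximal, $C_1$ with $|C_1-A|$ minimal, and rules out cases (i) and (ii) of Lemma~\ref{key} exactly as you do. However, the proposal is incomplete, and the strategy you sketch for the remaining case is not how the paper proceeds and does not obviously work.

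The difficulty with ``shortening by elimination'' is this: when $C_2=C(x,B)$ meets $A$, minimality forces $C_2-A=(C_1-A-y)\cup\{x\}$, and strong circuit elimination on $C_1$ and $C_2$ at a common element $z\in C_1-A-y$ yields a circuit $D$ with $D-A\subseteq (C_1-A-z)\cup\{x\}$; if $D$ meets $A$, minimality again forces equality, so $|D-A|=|C_1-A|$ and nothing shortens. Iterating never drops below $|C_1-A|$ elements outside $A$ unless one lands on a circuit disjoint from $A$, and the forbidden minors do not directly make that circuit small enough to contradict minimality. Your hoped-for dichotomy ``either a shorter circuit or a forbidden minor'' is not where the contradiction lives.

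The paper does not try to shorten. It instead proves four structural sublemmas: (a) if $C_2\cap A=\emptyset$ then $C_2$ is a triangle (else a $U_{3,5}$-minor appears after contracting $C_1-C_2$); (b) $|C_1-A|\ge 4$ (else $M/A$ is simple of rank two on at least five elements, a $U_{2,5}$); (c) every $x\in E(M)-(A\cup C_1)$ lies in a triangle with two elements of $C_1-A$ (else all fundamental circuits of $x$ meet $A$ in the full pattern, and contracting $A$ yields $U_{m-1,m+1}$ with $m\ge4$, a $U_{3,5}$); (d) the triangles attached to distinct external elements are pairwise disjoint (a shared element gives either a $U_{2,4}$-restriction leading to a $U_{2,5}$ in a $2$-sum decomposition, or a three-element fundamental circuit violating (a)). With every external element pinned to its own disjoint pair inside $C_1-A$, one such pair $\{y_1,z_1\}$ together with its partner $x_1$ forms a \emph{triad} of $M$: the closure of $E(M)-\{x_1,y_1,z_1\}$ has rank $r(M)-1$ and misses exactly $\{x_1,y_1,z_1\}$. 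This contradicts the hypothesis on cocircuit size.

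So the missing idea is that the final contradiction is a triad, obtained from a global analysis of how \emph{all} of $E(M)-(A\cup C_1)$ sits against $C_1-A$, rather than a shorter circuit or a forbidden minor extracted from a single external element. The forbidden minors are used only to force the triangle structure; the coup de gr\^ace comes from the cocircuit hypothesis.
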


\begin{proof}
Since $e$ is not free,  $E(M)$ has a subset $A$  that contains a circuit containing $e$ and is maximal with respect to being connected and non-spanning.  Assume that the theorem fails.

As $M$ is connected, it has a circuit meeting both $A$ and its complement. Choose such a circuit $C_1$ for which $|C_1 - A|$ is a minimum. By Lemma~\ref{key}, $A$ is a flat of $M$, while $C_1 - A$ is a series class in $M|(A \cup C_1)$, and $r(A \cup C_1) = r(M)$.  Moreover, 

\begin{sublemma}
\label{rk}
$r(M) = r(A) + |C_1 - A| -1$.
\end{sublemma}

As $M$ has no cocircuits of size less than four, $|E(M) - (A \cup C_1)| \ge 2$.  Take $s$ in $C_1 - A$. Then $M$ has a basis $B$ that contains $C_1 - s$ and is contained in $A \cup (C_1 - s)$. Choose $x$ in $E(M) - (A \cup C_1)$ and let $C_2$ be $C(x,B)$.

Next we show the following.

\begin{sublemma}
\label{c2a}
If $C_2 \cap A = \emptyset$, then $|C_2| = 3$.
\end{sublemma}

Let $C_1 \cap C_2 = \{y_1,y_2,\dots,y_t\}$. By circuit elimination, $M$ has a circuit $D_i$ that contains $x$ and not $y_i$ such that $D_i \subseteq C_1 \cup x$. Then $D_i \supseteq C_1 - C_2$. Moreover, the choice of $C_1$ implies that $D_i - A = ((C_1 - A) - y_i) \cup x$. Thus $D_i = (C_1 - y_i) \cup x$. From $M|(C_1 \cup x)$, contract $C_1 - \{y_1,y_2,\dots,y_k,s\}$. The resulting matroid $N$ has ground set $\{y_1,y_2,\dots,y_k,s,x\}$ and has every subset of size $k+1$ as a circuit except possibly $\{y_1,y_2,\dots,y_k,x\}$. If a proper subset of $\{y_1,y_2,\dots,y_k,x\}$ is a circuit of $N$, then this circuit is a 
proper subset of a $(k+1)$-element circuit of $N$, a contradiction. Thus $N \cong U_{k,k+2}$. As $M$ has no $U_{3,5}$-minor and $M$ is simple, we deduce that $k = 2$, so $|C_2| = 3$. Hence (\ref{c2a}) holds.

Next we note that 

\begin{sublemma}
\label{c1a}
$|C_1 - A| \ge 4$.
\end{sublemma}

As the theorem fails, $|C_1 - A| > 2$, by \ref{rk}. Assume that $|C_1 - A| = 3$. Thus $r(M/A) = 2$. Moreover,  $|E(M/A)| \ge 5$ as $|E(M) - (A \cup C_1)| \ge 2$.  Since $A$ is a flat of $M$, the matroid $M/A$ has no loops. Suppose it has a $2$-circuit $\{u,v\}$. Then $M$ has a circuit $C'$ such that $\{u,v\} \subseteq C' \subseteq \{u,v\} \cup A$. Thus $C'$ contradicts the choice of $C_1$. We deduce that $M/A$ is simple, so $M/A$ has $U_{2,5}$ as a restriction, a contradiction. Thus \ref{c1a} holds.

\begin{sublemma}
\label{xtri}
For $x$ in $E(M) - (A \cup C_1)$, there is an element $s$ of $C_1 - A$ such that $M$ has a triangle that contains $x$ and has its other two elements in $C_1 - (A \cup s)$.
\end{sublemma}

Assume that $M$ has no such triangle. For $s$ in $C_1 - A$, let $B_s$ be  a basis of $M$ containing $C_1 - s$ and let $C_s = C(x,B_s)$. By \ref{c2a}, $C_s$ meets $A$. By the choice of $C_1$, we deduce that $C_s - A = (C_1 - A - s) \cup x$. Let $|C_1 - A| = m$ and $N' = M|(A \cup C_1 \cup x)$. Then, for every $m$-element subset $Y$ of $(C_1 - A) \cup x$, there is a circuit of $N'$ that meets $(C_1 - A) \cup x$ in $Y$ and also meets $A$. Now $r(N') = r(M) = r(A) + |C_1 - A| - 1$. Contracting $A$ from $N'$ gives a matroid of rank $m - 1$ having $m+ 1$ elements. Take an $m$-element subset $Y$ of $(C_1 - A) \cup x$. Then $Y$ contains a circuit $Y'$ of $N'/A$. Thus $Y' \cup A$ contains a circuit $Y''$ of $M$ containing $Y'$. Then $Y''$ meets $A$ otherwise 
$Y'' = Y' \subseteq Y$, a contradiction as $Y$ is independent in $M$. 
Thus, by the choice of $C_1$, we must have that $|Y'' - A| = m$. Hence $m = |Y| \ge |Y'| \ge |Y''-A| \ge m$, so 
$Y' = Y$ and $Y$ is a circuit of $N'/A$. Thus $N'/A \cong U_{m-1,m+1}$. By \ref{c1a}, $m \ge 4$, so $M$ has a $U_{3,5}$-minor, a contradiction.  Thus \ref{xtri} holds.

\begin{sublemma}
\label{no4}
$M$ does not have a $4$-element subset $X$   with exactly two elements in $C_1 - A$ and exactly two elements in $E(M) - (A \cup C_1)$ such that $M|X \cong U_{2,4}$.
\end{sublemma}

Assume that   $\{y_1,y_2,x_1,x_2\}$ is such a $4$-element subset $X$ of $E(M)$  where $\{y_1,y_2\} \subseteq C_1 - A$. Then $r(A \cup (C_1 - \{y_1,y_2\})) = r(M) - 1$ and $r(X) = 2$. Thus, in $M|(A \cup C_1 \cup \{x_1,x_2\})$, which is connected, $X$ is $2$-separating. Therefore, $M|(A \cup C_1 \cup \{x_1,x_2\})$ is the $2$-sum, with basepoint $p$ of connected matroids $M_1$ and $M_2$ with ground sets $A \cup (C_1 - \{y_1,y_2\}) \cup p$ and $X \cup p$. Since $|X \cup p| = 5$ and $M_2$ has rank $2$, we must have that $p$ is parallel to some element of $X$ otherwise $M$ has a $U_{2,5}$-minor. Thus a member of $\{y_1,y_2,x_1,x_2\}$ is in the closure of $A \cup (C_1 - \{y_1,y_2\})$. Neither $y_1$ nor $y_2$ is in this closure. If $x_1$ or $x_2$ is, then there is a circuit $D$ containing $x_i$ for some $i$ in $\{1,2\}$ such that $D \subseteq A \cup (C_1 - \{y_1,y_2\}) \cup x_i$. The choice of $C_1$ implies that $D$ does not meet $A$. Thus $D \subseteq (C_1 - A) \cup x_i$. Then, by circuit elimination, $(D \cup \{x_i,y_1,y_2\}) - x_i$ contains a circuit. But this circuit is properly contained in $C_1$, a contradiction. We conclude that \ref{no4} holds.

\begin{sublemma}
\label{no2tri}
$M$ does not have   two triangles $\{y_1,y_2,x_2\}$ and $\{y_1,y_3,x_3\}$ where $y_1, y_2$, and $y_3$ are distinct elements of $C_1 - A$, and $x_2$ and $x_3$ are distinct elements of  $E(M) - (A \cup C_1)$.
\end{sublemma}

Assume that $M$ does  have two such triangles. Then $M$ has $(C_1 - y_1) \cup x_2$ as a circuit, $C'_1$ say. Thus $\{y_2,x_2,y_3,x_3\}$ is a circuit of $M$ having exactly three elements in $C'_1 - A$.  This is the fundamental circuit of $x_3$ with respect to a basis of $M$ that contains $C'_1 - t$ where $t \in C'_1 - A - \{x_2,y_2,y_3\}$, the existence of such an element $t$ being a consequence of    \ref{c1a}. Thus, using $C'_1$ in place of $C_1$ in \ref{c2a},   we get a contradiction. Hence \ref{no2tri} holds.

By \ref{xtri}, for each element $x$ in $E(M) - (A \cup C_1)$, there is a triangle of $M$ that contains $x$ and two elements of $C_1 - A$. Moreover, by \ref{no4} and \ref{no2tri}, if $x_1$ and $x_2$ are distinct elements of $E(M) - (A \cup C_1)$, then the corresponding triangles are disjoint.

Suppose that there are exactly $k$ elements, $x_1,x_2,\dots,x_k$, in $E(M) - (A \cup C_1)$ and that the corresponding triangles are $\{x_i,y_i,z_i\}$ for $1 \le i \le k$ where $\{y_i,z_i\} \subseteq C_1 - A$. Then $E(M) - (A \cup C_1) - \{y_1,z_1\}$ has rank $r(M) - 1$. The closure of this set contains 
$\{x_2,x_3,\dots,x_k\}$. The complement of this closure is $\{x_1,y_1,z_1\}$. Therefore $M$ has a triad. This contradiction completes the proof of the theorem.
\end{proof}

\begin{corollary}
\label{ternturn}
Let $M$ be a simple connected ternary matroid having no cocircuits of size less than four. Then $M$ has at least two connected hyperplanes. 
\end{corollary}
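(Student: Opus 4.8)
The plan is to deduce the existence of two connected hyperplanes from Theorem~\ref{maintern}, handling separately the only way Theorem~\ref{maintern} can fail to apply directly. First I would observe that a simple connected ternary matroid $M$ has no $U_{2,5}$-minor, since $U_{2,5}$ is not ternary, and it has no $U_{3,5}$-minor, since $U_{3,5} = U_{2,5}^*$ is likewise non-ternary and the class of ternary matroids is minor-closed. Thus $M$ satisfies all the hypotheses of Theorem~\ref{maintern} except possibly for the requirement that the chosen element be non-free. If $M$ has \emph{some} non-free element $e$, then Theorem~\ref{maintern} gives a connected hyperplane $H_1$ containing $e$. To get a second connected hyperplane, I would apply Theorem~\ref{maintern} again to an element $e'$ not in $H_1$ (such $e'$ exists since $H_1$ is a proper flat); provided $e'$ is non-free we obtain a connected hyperplane $H_2 \ni e'$, which is distinct from $H_1$. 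The one remaining wrinkle is that $e'$ itself might be free even though $e$ is not.

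To deal with free elements, I would invoke Lemma~\ref{freedom}(ii): if $M$ has a free element, then either $M \cong U_{2,4}$ or $M$ is obtained from a circuit $C$ by $2$-summing a copy of $U_{2,4}$ across each element of some subset $D$ of $C$. The case $M \cong U_{2,4}$ is impossible here, since $U_{2,4}$ has $3$-element cocircuits (its rank is $2$, so cocircuits have size $3$), contradicting the hypothesis that $M$ has no cocircuit of size less than four; in fact any matroid all of whose cocircuits have size at least four has rank at least four as well, but more to the point the $U_{2,4} \oplus_2 U_{2,4}$-type structures all contain triads (each $U_{2,4}$ summand contributes cocircuits of size three in $M$), so none of the matroids listed in Lemma~\ref{freedom}(ii) has all cocircuits of size at least four. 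Hence $M$ has \emph{no} free element at all, and the previous paragraph's argument applies to every element: pick any $e$, get $H_1 \ni e$ connected; pick $e' \notin H_1$, get $H_2 \ni e'$ connected; then $H_1 \neq H_2$.

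The main obstacle I anticipate is precisely the bookkeeping around free elements: one must be sure that the no-small-cocircuit hypothesis rules out \emph{all} of the exceptional matroids in Lemma~\ref{freedom}(ii), not just $U_{2,4}$. Concretely, in a matroid of the form described there, for a basepoint element $p$ of a $2$-sum with a $U_{2,4}$ the local structure forces a triad (the three elements of the $U_{2,4}$ other than $p$, or a subset thereof, become a cocircuit of size three in $M$); and a circuit of size at least three with no $U_{2,4}$ attachments is itself its own cocircuit complement in low rank, again producing small cocircuits unless the rank is large, but a bare circuit has a cocircuit of size two in its closure's complement only in degenerate cases — the clean statement is simply that each of these matroids has a cocircuit of size at most three. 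Once that observation is in hand, the corollary follows immediately, so I would state it as a one- or two-sentence remark rather than belaboring it. An alternative, slicker route avoiding the case analysis: note that by the discussion preceding Theorem~\ref{maintern}, the hypothesis ``no cocircuit of size less than four'' already excludes the matroids of Lemma~\ref{freedom}(ii) because they ``have numerous triads,'' so $M$ has no free element, and then Theorem~\ref{maintern} applied twice (to an element and to one outside the first connected hyperplane) finishes the proof.
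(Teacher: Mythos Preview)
Your proposal is correct and follows essentially the same approach as the paper: use Lemma~\ref{freedom}(ii) together with the no-small-cocircuit hypothesis to conclude that $M$ has no free elements, and then apply Theorem~\ref{maintern} twice (once to an arbitrary element, once to an element outside the first connected hyperplane). The paper's proof is just your ``slicker route'' stated in two sentences, without the preliminary case analysis; your extra discussion of why the matroids in Lemma~\ref{freedom}(ii) all have cocircuits of size at most three is correct but is treated by the paper as implicit.
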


\begin{proof}
By Lemma~\ref{freedom}(ii), since $M$ has no cocircuits of size less than four, $M$ has no free elements. Let $e$ be an element of $M$. Then, by Theorem~\ref{maintern}, $M$ has a connected hyperplane $H_e$ containing $e$. For $f$ in $E(M) - H_e$, there is a connected hyperplane $H_f$ containing $f$, so the corollary holds.
\end{proof}

In view of Theorem~\ref{mwu_result}, it is of interest to specify the minimum number of connected hyperplanes in a simple connected ternary matroid with no cocircuits of size less than four. There are infinitely many examples of such matroids with exactly four connected hyperplanes but we do not know if four is indeed the minimum number of connected hyperplanes. To get a family of examples with exactly four connected hyperplanes, first take a graph $G$ formed from 
two vertex-disjoint paths $x_1x_2\dots x_n$ and $y_1y_2\dots y_n$ for some $n \ge 1$ by adding the $n$ edges $x_iy_i$, the $n-1$ edges of the form $x_iy_{i+1}$ for $1 \le i\le n-1$, and the $n-1$ edges of the form $x_{j+1}y_{j}$ for $1 \le j\le n-1$. Then take two copies, $N_1$ and $N_2$, of $M(K_4)$, pick a point $p_i$ of $N_i$  and freely add a point $q_i$ to one of the triangles of $N_i$ not containing $p_i$. Finally, take the parallel connection of $N_1$ and $M(G)$ with respect to the basepoints $p_1$ and $x_1y_1$, and then attach $N_2$ to the resulting matroid via parallel connection with respect to the basepoints $x_ny_n$ and $p_2$. The resulting simple connected ternary matroid has $5n+8$ elements, rank $2n+3$, and has no cocircuits of size less than four. It also has exactly four connected hyperplanes.

\section{Induced-restriction-minimal non-$GF(2)$-comatroids}
\label{bin}

An \textbf{induced-restriction-minimal non-$GF(q)$-comatroid} is a $GF(q)$-representable matroid $M$ that is not a $GF(q)$-comatroid such that every proper flat of $M$  is a $GF(q)$-comatroid.  The collection of such matroids $M$ will be denoted by  $\mathcal{M}_q$.  Clearly, 
$M^c \in \mathcal{M}_q$ for every matroid $M$ in $\mathcal{M}_q$.    This section begins with some preliminary results that will be used in the proofs of the main theorems. It concludes with  proofs of Theorem~\ref{main_binary} and Corollary~\ref{cor_binary}.

\begin{lemma}
\label{enough_elements_imply_connected}
For $q$ in $\{2,3\}$, let $X$ be a subset of $PG(r-1,q)$ having at least $q^{r-1}+1$ elements. Then the matroid $PG(r-1,q)|X$ is connected and has rank $r$.
\end{lemma}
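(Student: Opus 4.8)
The plan is to prove the contrapositive of the two failure modes separately: first that a set $X$ with more than $q^{r-1}$ elements must have rank $r$, and then that it must be connected. The rank statement is immediate from Lemma~\ref{cover}: view $X$ as the green set $G$ in a $2$-coloring of $PG(r-1,q)$ with $R = E(PG(r-1,q)) - X$. Since $|X| > q^{r-1} = |AG(r-1,q)|$, the complement $R$ has fewer than $\frac{q^r-1}{q-1} - q^{r-1} = \frac{q^{r-1}-1}{q-1}$ elements, which is strictly fewer than the number of points in a projective hyperplane, so $R$ cannot span $PG(r-1,q)$; hence $r(R) < r$, and Lemma~\ref{cover} forces $r(X) = r$.

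For connectedness, I would argue by contradiction: suppose $M = PG(r-1,q)|X$ is disconnected, so $X = X_1 \cup X_2$ with $X_1, X_2$ nonempty, skew, and $r(X_1) + r(X_2) = r(X) = r$. Write $r(X_i) = r_i$, so $r_1 + r_2 = r$ with $r_1, r_2 \ge 1$. Since $X_i$ lies in a projective flat of rank $r_i$, we have $|X_i| \le \frac{q^{r_i}-1}{q-1}$, hence
\[
|X| \le \frac{q^{r_1}-1}{q-1} + \frac{q^{r_2}-1}{q-1} = \frac{q^{r_1} + q^{r_2} - 2}{q-1}.
\]
The main step is then the elementary inequality $q^{r_1} + q^{r_2} - 2 \le (q-1)q^{r-1} + (q-1)$, equivalently $q^{r_1} + q^{r_2} \le q^r - q^{r-1} + q + 1$, valid for $q \ge 2$, $r_1, r_2 \ge 1$, $r_1 + r_2 = r$ (the left side is maximized when one of $r_1, r_2$ equals $r-1$, giving $q^{r-1} + q$, which is at most the right side). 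This yields $|X| \le q^{r-1} + 1$; to get the strict contradiction with $|X| \ge q^{r-1}+1$ I would check that equality throughout would force $\{r_1,r_2\} = \{1, r-1\}$ and both $X_1, X_2$ to be full projective flats, and then argue directly that such an $X$ still cannot be disconnected when it has exactly $q^{r-1}+1$ points — or, more cleanly, observe that for $q \ge 2$ the inequality $q^{r-1}+q < \frac{q^r-1}{q-1}$ when combined with the hyperplane bound already gives $|X| \le q^{r-1}$, a contradiction.

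The only delicate point is bookkeeping in the boundary case $r_1 = r-1$, $r_2 = 1$, since there the crude flat bound is nearly tight; I expect that a short case analysis (a rank-$(r-1)$ projective flat together with one extra point is exactly $q^{r-1} + \frac{q^{r-2}-1}{q-1} \cdot$\,\ldots, which is comfortably less than $q^{r-1}+1$ once $r \ge 2$ and $q \ge 2$ unless $r$ is very small) closes it, and the small-$r$ instances ($r \le 2$) are trivial to verify by hand. So the structure of the write-up is: (1) rank via Lemma~\ref{cover} and a point-count on the complement; (2) assume disconnected, bound $|X|$ by the sum of two projective-flat sizes; (3) the arithmetic inequality showing this is $\le q^{r-1}$, contradicting the hypothesis.
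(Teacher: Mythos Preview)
Your approach matches the paper's exactly: establish rank $r$ by a point count, then assume disconnected and bound $|X|$ by the sum of two projective-flat sizes. But you have errors in both halves of the execution.

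For the rank part, your argument is wrong. You claim that $|R| < \tfrac{q^{r-1}-1}{q-1}$ implies $R$ cannot span $PG(r-1,q)$; but a basis has only $r$ points and spans. Lemma~\ref{cover} is unnecessary here. The paper's argument is direct: a projective hyperplane has $\tfrac{q^{r-1}-1}{q-1} \le q^{r-1}$ points, so $|X| \ge q^{r-1}+1$ forces $X$ to lie in no hyperplane, whence $r(X)=r$.

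For connectedness, you correctly reach $|X| \le \tfrac{q^{r_1}+q^{r_2}-2}{q-1}$ and correctly note that this is maximized at $\{r_1,r_2\}=\{1,r-1\}$, giving $|X| \le \tfrac{q^{r-1}+q-2}{q-1}$. But you then compare this to $q^{r-1}+1$ rather than to $q^{r-1}$, which leaves you one short and leads to the floundering in your last two paragraphs. The clean finish (and what the paper does) is simply to observe that
\[
\frac{q^{r-1}+q-2}{q-1} \le q^{r-1},
\]
which is equivalent to $q-2 \le (q-2)q^{r-1}$ and holds for all $q\ge 2$, $r\ge 1$ (with equality precisely when $q=2$). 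Hence $|X|\le q^{r-1}$, contradicting the hypothesis. No boundary-case analysis is needed.
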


\begin{proof}
Observe that $X$ has more elements than a hyperplane of $PG(r-1,q)$, so $PG(r-1,q)|X$ has rank $r$. Assume that $PG(r-1,q)|X$ is disconnected. Then, for some $j$ with $1 \leq j \leq r-1$, the matroid $PG(r-1,k)|X$ is contained in $PG(j-1,q) \oplus PG(r-j-1,q)$. Thus $|X| \leq \frac{q^j - 1}{q-1} + \frac{q^{r-j} -1}{q-1}  = \frac{q^j + q^{r-j} - 2}{q-1}$. This function of $j$ is maximized when $j$ is $1$ or $r-1$, so $|X| \leq q^{r-1}$, a contradiction.
\end{proof}

\begin{lemma}
\label{parallel_connection_complement}
For $q$ in $\{2,3\}$, let $N$ be the parallel connection of $PG(j-1,q)$ and $PG(k-1,q)$ where $2 \leq j \leq k$ and $k \geq 3$. Then the complement $N^c$ of $N$ has rank equal to $r(N)$.
\end{lemma}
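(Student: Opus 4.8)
The plan is to compute $r(N^c)$ from an explicit coordinatization. Write $V = GF(q)^{j+k-1}$ and realize $N$ inside $PG(j+k-2,q)=PG(V)$ in the standard way: fix subspaces $V_1,V_2\le V$ with $\dim V_1=j$, $\dim V_2=k$, $V_1+V_2=V$, and $V_1\cap V_2=\langle p\rangle$ a line (the basepoint of the parallel connection), so that $E(N)$ is the point set $PG(V_1)\cup PG(V_2)$ of $PG(V)$. Then $r(N)=j+k-1$, and $N^c$ is the restriction of $PG(V)$ to the set $R$ of points whose spanning vectors lie outside $V_1\cup V_2$. Since $r(N^c)=r_{PG(V)}(R)\le j+k-1$ automatically, it suffices to show that $R$ is not contained in any projective hyperplane of $PG(V)$, that is, that the vectors underlying $R$ span $V$.

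The key observation to record first is elementary: if $u\in V_1\setminus\langle p\rangle$ and $v\in V_2\setminus\langle p\rangle$, then $u+v$ is a nonzero vector lying in neither $V_1$ nor $V_2$ (since $u+v\in V_1$ would force $v\in V_1\cap V_2=\langle p\rangle$, and symmetrically), so $\langle u+v\rangle$ is a point of $R$; moreover $V_1\setminus\langle p\rangle$ and $V_2\setminus\langle p\rangle$ are nonempty because $j,k\ge 2$. Now suppose for contradiction that $R$ is contained in some projective hyperplane of $PG(V)$, corresponding to a hyperplane $W$ of $V$, and let $\phi\colon V\to GF(q)$ be a nonzero linear functional with kernel $W$; then every $x$ with $\phi(x)\ne 0$ lies in $V_1\cup V_2$. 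Applying this to the vectors $u+v$ gives $\phi(u)+\phi(v)=0$ for all $u\in V_1\setminus\langle p\rangle$ and $v\in V_2\setminus\langle p\rangle$; fixing one $u$ and one $v$ shows that $\phi$ is constant, with some value $c$, on $V_1\setminus\langle p\rangle$ and constant with value $-c$ on $V_2\setminus\langle p\rangle$.

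It then remains to force $c=0$. If $c\ne 0$ then $\phi|_{V_1}$ is a nonzero functional, so $\{x\in V_1:\phi(x)=c\}$ is a coset of its kernel and has exactly $q^{j-1}$ elements; as it contains all $q^j-q$ vectors of $V_1\setminus\langle p\rangle$, we get $q^{j-1}(q-1)\le q$, which forces $q=2$ because $j\ge 2$. In the case $q=2$ we have $-c=c\ne 0$, and the same count applied to $V_2$ yields $2^{k-1}\le 2$, contradicting $k\ge 3$. Hence $c=0$. Then $\phi$ vanishes on $V_1\setminus\langle p\rangle$, which spans $V_1$ since $j\ge 2$ (the complement of a line in $V_1$ cannot be contained in a hyperplane of $V_1$), so $\phi$ vanishes on all of $V_1$, and likewise on $V_2$, hence on $V_1+V_2=V$ --- contradicting $\phi\ne 0$. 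Therefore $R$ spans $PG(V)$ and $r(N^c)=j+k-1=r(N)$.

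The step I expect to be the main obstacle is exactly the deduction that $c=0$. For $q\ge 3$ one can just rescale a vector of $V_1\setminus\langle p\rangle$, staying outside $\langle p\rangle$, to rule out a nonzero common value; over $GF(2)$ no such rescaling is available, and it is precisely here that the hypothesis $k\ge 3$ is needed, through the cardinality bound on the relevant coset of $\phi|_{V_2}$. Everything else is routine linear algebra over $GF(q)$.
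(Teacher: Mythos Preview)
Your proof is correct, but it follows a quite different route from the paper's. The paper gives a three-line structural argument: if $r(N^c)<r(N)$, then $N^c$ lies in a projective hyperplane, so $N$ contains the complementary $AG(r(N)-1,q)$ as a spanning restriction; since $r(N)=j+k-1\ge 4$, this affine geometry is $3$-connected, which forces $N$ to be $3$-connected, contradicting the fact that a parallel connection of two matroids each with at least three elements has an obvious $2$-separation. Your argument instead works entirely with coordinates and a linear functional, and is self-contained in that it avoids any appeal to $3$-connectivity or to structural facts about affine geometries. The trade-off is that the paper's proof is much shorter and conceptually transparent (and makes clear that the result holds for any parallel connection that is not $3$-connected, once the rank is at least four), while your approach is more elementary and would work for a reader without the matroid-connectivity background. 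Your identification of where $k\ge 3$ enters---ruling out the $q=2$ case in the coset count---matches exactly how the paper uses it, since $k\ge 3$ is precisely what guarantees $r(N)\ge 4$.
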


\begin{proof}
Assume that $r(N^c) < r(N)$. Then $N$ has $AG(r(N)-1,q)$ as a restriction. Now $AG(r(N)-1,q)$ is $3$-connected since $r(N) \geq 4$ so $N$ is $3$-connected, a contradiction.
\end{proof}

The next result is  from \cite{oxwu}.

\begin{theorem}
\label{oxwu_result}
Let $n$ be an integer exceeding one and $X$ and $Y$ be subsets of the ground set of a matroid $M$. Suppose $M|X$ and $M|Y$ are both vertically $n$-connected and $r(X) + r(Y) - r(X \cup Y) \geq n-1$. Then $M|(X \cup Y)$ is vertically $n$-connected. 
\end{theorem}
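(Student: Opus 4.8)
The plan is to argue by contradiction, using the standard reformulation that a matroid $N$ is vertically $n$-connected if and only if every partition $(B_1,B_2)$ of $E(N)$ with $r(B_1)+r(B_2)-r(N)\le n-2$ has $r(B_1)=r(N)$ or $r(B_2)=r(N)$; equivalently, $N$ fails to be vertically $n$-connected exactly when it admits a partition with both sides non-spanning and connectivity value at most $n-2$ (this is immediate from the definition of a vertical $j$-separation, since neither side of such a separation can span $N$). So I would suppose $M|(X\cup Y)$ is not vertically $n$-connected, write $Z=X\cup Y$, and fix a partition $(A_1,A_2)$ of $Z$ with $\ell:=r(A_1)+r(A_2)-r(Z)\le n-2$ and $r(A_1),r(A_2)<r(Z)$. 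Put $X_i=A_i\cap X$ and $Y_i=A_i\cap Y$ for $i\in\{1,2\}$, so $(X_1,X_2)$ and $(Y_1,Y_2)$ are the partitions of $X$ and of $Y$ induced by $(A_1,A_2)$. The aim is to show that these induced partitions are too degenerate to be compatible with $r(X)+r(Y)-r(X\cup Y)\ge n-1$.

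The first and principal step is the monotonicity statement that the connectivity value does not drop when a restriction is enlarged:
\[
\ell \;=\; r(A_1)+r(A_2)-r(Z)\;\ge\; r(X_1)+r(X_2)-r(X),
\]
and symmetrically with $Y$ in place of $X$. I would prove this by two applications of submodularity of the rank function. Writing $P=A_1\setminus X$, so that $A_1=X_1\cup P$ disjointly, $A_1\cup X=X\cup P$, and $A_1\cap X=X_1$, the pair $(A_1,X)$ gives $r(A_1)+r(X)\ge r(X\cup P)+r(X_1)$, while the pair $(X\cup P,A_2)$ gives $r(X\cup P)+r(A_2)\ge r(Z)+r(X_2)$, where one uses that $(X\cup P)\cup A_2=Z$ and that $(X\cup P)\cap A_2=X_2$ because $P$ is disjoint from $A_2$. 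Eliminating $r(X\cup P)$ between the two inequalities yields the displayed bound.

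Given this, $r(X_1)+r(X_2)-r(X)\le\ell\le n-2$, so applying the reformulation to the vertically $n$-connected matroid $M|X$ forces one of $X_1,X_2$, say $X_{i_0}$, to span $X$; likewise some $Y_{j_0}$ spans $Y$. If $i_0\ne j_0$, then $\{A_{i_0},A_{j_0}\}=\{A_1,A_2\}$, so $r(A_1)+r(A_2)\ge r(X_{i_0})+r(Y_{j_0})=r(X)+r(Y)$ and hence $\ell\ge r(X)+r(Y)-r(Z)\ge n-1$, contradicting $\ell\le n-2$; this is the single place where the hypothesis $r(X)+r(Y)-r(X\cup Y)\ge n-1$ is invoked. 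If instead $i_0=j_0$, say both equal $1$, then $X\subseteq{\rm cl}_M(X_1)$ and $Y\subseteq{\rm cl}_M(Y_1)$, whence $Z\subseteq{\rm cl}_M(X_1\cup Y_1)={\rm cl}_M(A_1)$ and so $r(A_1)=r(Z)$, contradicting $r(A_1)<r(Z)$. Both cases being impossible, $M|(X\cup Y)$ is vertically $n$-connected.

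The step I expect to need the most care is the submodularity computation of the second paragraph: it is the mechanism that converts connectivity information about $X$ and $Y$ individually into connectivity information about $X\cup Y$, and it relies on getting the unions and intersections exactly right, namely $A_1\cap X=X_1$, $(X\cup P)\cap A_2=X_2$, and $(X\cup P)\cup A_2=Z$. Once that monotonicity statement is available, the remainder is the short two-case check above, with the local-connectivity hypothesis entering only in the ``crossed'' case $i_0\ne j_0$.
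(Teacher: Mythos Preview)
The paper does not prove this theorem; it is quoted from \cite{oxwu} and used as a tool. Your argument is correct: the submodularity calculation yielding $r(A_1)+r(A_2)-r(Z)\ge r(X_1)+r(X_2)-r(X)$ is the standard monotonicity of the connectivity function under restriction, and the two-case finish is clean and accurate. This is essentially the argument in the original Oxley--Wu paper, so there is nothing further to compare.
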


The following is a straightforward consequence of Proposition \ref{sum_of_connectivities}.

\begin{lemma}
\label{red_or_green_connected}
Let $r \geq 4$. For an arbitrary prime power $q$, color the elements of $PG(r-1,q)$ red or green. Then either $PG(r-1,q)|G$ or $PG(r-1,q)|R$ is connected of rank $r$.
\end{lemma}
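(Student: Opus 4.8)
The plan is to deduce Lemma~\ref{red_or_green_connected} directly from Proposition~\ref{sum_of_connectivities} together with Lemma~\ref{cover}. First I would set $j$ and $k$ to be the vertical connectivities of $PG(r-1,q)|G$ and $PG(r-1,q)|R$, respectively. Proposition~\ref{sum_of_connectivities} gives $j+k\ge r$ except in the one exceptional case $(q,r)=(2,3)$, which is excluded here since we assume $r\ge 4$. Since $r\ge 4$, the inequality $j+k\ge r\ge 4$ forces at least one of $j,k$ to be at least $2$; say $j\ge 2$. A matroid with vertical connectivity at least $2$ is connected, so $PG(r-1,q)|G$ is connected.

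The remaining point is that $PG(r-1,q)|G$ has rank exactly $r$. Here I would invoke Lemma~\ref{cover}: either $r(G)=r$ or $r(R)=r$. If $r(G)=r$ we are done with $G$. If instead $r(G)<r$, then $r(R)=r$, and I claim $R$ is then the connected, full-rank side. Indeed, if $r(G)<r$ then (as noted in the proof of Proposition~\ref{sum_of_connectivities}) $R$ contains $AG(r-1,q)$, which is vertically $(r-1)$-connected, hence $k\ge r-1\ge 3$, so $PG(r-1,q)|R$ is connected of rank $r$. Either way one of the two restrictions is connected of rank $r$, which is the conclusion.

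Actually the cleanest packaging is: by Lemma~\ref{cover}, without loss of generality $r(G)=r$; it then suffices to show $PG(r-1,q)|G$ is connected, i.e. $j\ge 2$. If $r(R)<r$ then $G\supseteq AG(r-1,q)$ gives $j\ge r-1\ge 3$. Otherwise $r(R)=r$ too, and Proposition~\ref{sum_of_connectivities} (no exceptional case since $r\ge 4$) gives $j+k\ge r\ge 4$; if $j\le 1$ then $k\ge r-1\ge 3$, so we may instead take $R$ as the connected full-rank restriction, using $r(R)=r$. In every case one of $PG(r-1,q)|G$, $PG(r-1,q)|R$ is connected of rank $r$.

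I do not expect any real obstacle here; the lemma is explicitly flagged in the excerpt as "a straightforward consequence of Proposition~\ref{sum_of_connectivities}." The only thing to be careful about is bookkeeping: the hypothesis $r\ge 4$ is needed both to kill the $(2,3)$ exceptional case in Proposition~\ref{sum_of_connectivities} and to guarantee $j+k\ge 4$, which in turn forces one connectivity to be $\ge 2$ (whereas $j+k\ge r$ with $r\le 3$ could be achieved by $j=k=1$). Combining the rank statement from Lemma~\ref{cover} with the connectivity statement from Proposition~\ref{sum_of_connectivities}, and matching them up on the same side, is the one slightly fiddly step, handled by the case split above.
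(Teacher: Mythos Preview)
Your argument is correct and is exactly the approach the paper has in mind: the paper simply states that the lemma is a straightforward consequence of Proposition~\ref{sum_of_connectivities}, and what you have written is precisely the unpacking of that remark, using Lemma~\ref{cover} to pin down which side has full rank and the hypothesis $r\ge 4$ to avoid the exceptional case and to force one vertical connectivity to be at least $2$. Your ``cleanest packaging'' paragraph is the right way to present it.
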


Recall that, for a flat  $F$ in a  simple $GF(q)$-represented matroid $M$, we write $F^c$ for the matroid $(M|F)^c$.

\begin{lemma}
\label{cosimple_case}
Let $M$ be a matroid in $\mathcal{M}_2$ such that $r(M) \geq 5$ and $M$ has a $2$-cocircuit. Then $M$ is isomorphic to a circuit or to $P(U_{3,4}, U_{3,4})$.
\end{lemma}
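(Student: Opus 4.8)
The plan is to exploit the $2$-cocircuit together with the fact that $M \in \mathcal{M}_2$ forces every proper flat of $M$ (and of $M^c$) to be a $GF(2)$-comatroid. Let $\{e,f\}$ be a $2$-cocircuit of $M$; then $e$ and $f$ are in series, so $M$ has a series class $S \supseteq \{e,f\}$. First I would reduce to the case where $M$ is connected: if $M$ is disconnected, then, by Lemma~\ref{closure_under_components}-type reasoning applied to its complement and to its components (each proper flat being a comatroid), one checks $M$ cannot be a minimal non-comatroid of rank at least $5$ while having a disconnected structure, so we may assume $M$ is connected. Now, because $e$ and $f$ are in series, contracting $f$ and simplifying produces a matroid in which $e$ has become "more free," and the natural move is to look at the flat $F = E(M) - \{e,f\}$ (or the closure of a co-series-complement): this is a proper flat of $M$, hence a $GF(2)$-comatroid, and moreover $M$ is obtained from $M|F$ by adding back a short series class. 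I would then invoke Corollary~\ref{mainbin2} (or Theorem~\ref{mainbin}) to control the series classes of $M$.

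The key structural step is to apply Theorem~\ref{mainbin} to the element $e$: either $M$ is a circuit (and we are done, since that is one of the two allowed outcomes), or $M$ has a connected hyperplane containing $e$, or $M$ has a series class of size at least three avoiding $e$. In the last case, combined with the $2$-cocircuit $\{e,f\}$, $M$ has two distinct series classes, which by Corollary~\ref{mainbin2} is consistent but now forces $M$ to look like a parallel connection of circuits after dualizing locally; the target $P(U_{3,4},U_{3,4})$ is precisely the cycle matroid of a $6$-cycle with one chord, which has two series classes of size three. So the heart of the argument is: using that $F = \operatorname{cl}_M(E(M) - S)$ is a proper flat and hence a comatroid, together with the connectivity constraints from Proposition~\ref{comat_conn} (a connected $GF(2)$-comatroid of rank $r$ is vertically $(r-1)$-connected) applied to $M^c$, pin down that $M|F$ must itself be essentially a circuit or a circuit-plus-small-series-class, and that $r(M) \ge 5$ then forces $|S| = 2$, $|E(M)| = r(M)+2$, and the complementary series class also has size exactly $3$. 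Running the same analysis on $M^c$ (which is also in $\mathcal{M}_2$ and has the "dual" feature coming from the $2$-cocircuit, namely a short circuit) closes off the remaining possibilities.

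Concretely, the steps in order are: (1) reduce to $M$ connected; (2) fix the series class $S \ni e,f$ and set $F = \operatorname{cl}_M(E(M)\setminus S)$, observe $F$ is a proper flat, so $M|F$ is a $GF(2)$-comatroid, and $M$ is a series extension of $M|F$ with $r(M) = r(M|F) + |S| - 1$; (3) apply Theorem~\ref{mainbin}/Corollary~\ref{mainbin2} to bound the number and sizes of series classes of $M$; (4) use Lemma~\ref{not_comatroid_1} and Proposition~\ref{comat_conn} on $M$ and on $M^c$ to rule out $r(M)=r(M^c)$ with both connected, forcing $M^c$ to be disconnected or low rank, and translate this into a bound on $|E(M)|$; (5) with $r(M) \ge 5$, conclude that either the series classes exhaust $E(M)$ and $M$ is a circuit, or there are exactly two series classes of sizes summing appropriately and $M$ is the $6$-cycle-with-a-chord, i.e.\ $P(U_{3,4},U_{3,4})$; (6) check directly that these two matroids indeed lie in $\mathcal{M}_2$ to confirm consistency, using Corollary~\ref{cct} for the circuit.

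I expect the main obstacle to be step (4)–(5): carefully extracting from "$M|F$ is a comatroid" plus the series-class data the precise conclusion that no comatroid of the relevant rank can sit inside $M$ unless $M$ is one of the two listed matroids — this is where the high-connectivity of comatroids (Proposition~\ref{comat_conn}) must be played against the presence of a small cocircuit, and one has to be careful that the $2$-cocircuit does not propagate into a longer series class that would create a circuit flat of size exceeding five inside $M$ or $M^c$ without $M$ itself being that circuit. Handling the interaction between the two "dual" series classes (one in $M$, one forced in $M^c$) and showing they cannot both be large when $r(M) \ge 5$ is the delicate calculation I would isolate as a sublemma.
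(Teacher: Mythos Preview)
Your proposal has a genuine gap, rooted in two related confusions.

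First, you repeatedly conflate the projective complement $M^c$ with the matroid dual $M^*$. A $2$-cocircuit in $M$ does \emph{not} give ``a short circuit'' in $M^c$; the operation $M \mapsto M^c$ is deletion inside $PG(r(M)-1,2)$, not dualization. So the sentence ``Running the same analysis on $M^c$ (which $\dots$ has the `dual' feature coming from the $2$-cocircuit, namely a short circuit)'' is simply wrong, and the symmetry you are hoping for in step~(5) does not exist.

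Second, in step~(4) you try to apply Lemma~\ref{not_comatroid_1} and Proposition~\ref{comat_conn} to $M$ and $M^c$ themselves. But $M \in \mathcal{M}_2$ means $M$ is \emph{not} a comatroid, so Proposition~\ref{comat_conn} says nothing about $M$ or $M^c$, and Lemma~\ref{not_comatroid_1} only confirms what you already know. The minimality of $M$ gives you information about \emph{proper flats}, not about $M$. You never actually exploit this: knowing that $M|F$ is a comatroid for $F = {\rm cl}_M(E(M)\setminus S)$ does not by itself bound $|E(M)|$, and you give no mechanism for the bound you announce.

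The paper's argument is quite different and much more direct. Write $M = M_1 \oplus_2 M_2$ with $M_2$ the circuit on the maximal series class $S$ together with the basepoint $p$. If $p$ is free in $M_1$, then Lemma~\ref{freedom}(i) forces $M_1$ to be a circuit, and $M$ is a parallel connection of two circuits; Corollary~\ref{cct} caps their sizes and yields $P(U_{3,4},U_{3,4})$. If $p$ is not free, take a non-spanning circuit $C_p$ of $M_1$ through $p$ and look at the flat $F = {\rm cl}_M((C_p \cup S)-p)$. This $F$ is a \emph{connected} proper flat of $M$ whose restriction is a $2$-sum; then Proposition~\ref{sum_of_connectivities} together with Lemma~\ref{parallel_connection_complement} show that $F^c$ is \emph{also} connected of rank $r(F)$, so $M|F$ is not a comatroid, contradicting minimality. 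A similar move with a connected hyperplane of $M_1$ through $p$ (supplied by Theorem~\ref{mainbin}) finishes the remaining case. The key idea you are missing is this construction of a connected proper flat whose projective complement is also connected of the same rank; that is what turns the minimality hypothesis into a contradiction.
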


\begin{proof} 
Assume that the result fails. Since $M$ has a $2$-cocircuit, it has a maximal non-trivial series class $S$. Thus $M= M_1 \oplus_2 M_2$ where $M_2$ is a circuit with ground set $S \cup p$,  and $p$ is the basepoint of the $2$-sum. If $p$ is parallel to an element $s$ in $M_1$, then we move $s$ into $M_2$ so that it become parallel to $p$ there. 

Suppose that $p$ is free in $M_1$. Then, by Lemma~\ref{freedom}(i), $M_1$ is a circuit. As $M$ is not a circuit and $r(M) \ge 5$, we deduce that the element $s$ exists. Thus $M$ is the parallel connection of two circuits. By Corollary~\ref{cct}, neither of these circuits has more than four elements. Hence $M \cong P(U_{3,4}, U_{3,4})$, a contradiction. We deduce that $p$ is not free in $M_1$. Thus $M_1$ has a non-spanning circuit $C_p$ that contains $p$. 
 If $r((C_p \cup S) - p) \ge 4$, then the closure,  $F$, of $(C_p \cup S) - p$ is a connected proper flat in $M$, Moreover, by Lemmas~\ref{sum_of_connectivities} and  \ref{parallel_connection_complement}, $F^c$ is also connected of rank $r(F)$. This contradicts the minimality of $M$. We deduce that $r((C_p \cup S) - p) = 3$. Hence every non-trivial series class of $M$ has exactly two elements. Now, by Theorem~\ref{mainbin}, as $M_1$ is not a circuit and does not have a series class of size at least three avoiding $p$, it has a connected hyperplane $H$ containing $p$. Then ${\rm cl}_M((H \cup S) - p)$ is a connected proper flat, $F$, of $M$ of rank $r(M) - 1$. As above, $F^c$ is connected of rank $r(F)$, a contradiction. 
\end{proof}

\begin{lemma}
\label{M_3_connected}
Let $M$ be a matroid in $\mathcal{M}_2$ such that $r(M) \geq 5$. Then 
\begin{itemize}
\item[(i)] $M$ is a circuit; or 
\item[(ii)] $M \cong P(U_{3,4}, U_{3,4})$; or 
\item[(iii)] $M$ is $3$-connected. 
\end{itemize}
\end{lemma}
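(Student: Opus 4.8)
The plan is to proceed by contradiction, assuming that $M$ is a matroid in $\mathcal{M}_2$ with $r(M) \geq 5$ that is neither a circuit nor isomorphic to $P(U_{3,4},U_{3,4})$, and to show that $M$ must be $3$-connected. Since $M$ is simple, it has no $1$-separation obstruction except disconnection; but $M$ cannot be disconnected, because a disconnected matroid in $\mathcal{M}_2$ would have all its components as proper flats, hence as $GF(2)$-comatroids by minimality, and then $M$ itself would be a $GF(2)$-comatroid by closure under direct sums, contradicting $M \in \mathcal{M}_2$. So $M$ is connected, and the whole task reduces to ruling out the existence of an exact $2$-separation.

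Next I would invoke Lemma~\ref{cosimple_case}: if $M$ had a $2$-cocircuit, then $M$ would be a circuit or $P(U_{3,4},U_{3,4})$, contrary to assumption. Hence $M$ has no cocircuit of size less than three, i.e.\ $M$ is cosimple (as well as simple). Now suppose for contradiction that $(X,Y)$ is an exact $2$-separation of $M$, so $r(X) + r(Y) - r(M) = 1$ with $|X|, |Y| \geq 2$. Since $M$ is connected and has rank at least five, I expect the standard argument: $M = M_X \oplus_2 M_Y$ where $M_X$ and $M_Y$ are connected binary matroids on $X \cup p$ and $Y \cup p$ respectively, $p$ the basepoint. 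Because $M$ is simple and cosimple, each of $M_X, M_Y$ has at least four elements, and neither $X$ nor $Y$ can be a series or parallel class; in particular $r(X), r(Y) \geq 2$, and since the ranks sum to $r(M)+1 \geq 6$, at least one side, say $X$, has $r(X) \geq 3$, while $r(Y) \geq 2$.

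The key step is then to locate a connected proper flat $F$ of $M$ with $r(F) = r(M)-1$ such that $F^c$ is also connected of rank $r(F)$, which contradicts the minimality of $M$ via Theorem~\ref{equiv}. The natural candidate is built on the $2$-separating side of larger rank: take the closure in $M$ of $X$ together with enough of $Y$ to reach corank one but not span, or dually work inside $M_X$. Concretely, if $r(X) \geq 3$ I would apply Theorem~\ref{mainbin} (the Kelmans--Seymour extension) to $M_X$ with the basepoint $p$: since $M_X$ is not a circuit (else $M$ would be a parallel connection of a circuit with $M_Y$, and one pushes through as in Lemma~\ref{cosimple_case} to get a circuit or $P(U_{3,4},U_{3,4})$) and $M_X$ has no series class of size at least three avoiding $p$ (as $M$ is cosimple, any series class of $M_X$ of size $\geq 3$ not meeting $p$ would be a series class of $M$), $M_X$ has a connected hyperplane $H$ containing $p$. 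Then $F := {\rm cl}_M((H - p) \cup Y)$ is a connected flat of $M$ of rank $r(M)-1$. For the complement: since $r(F) = r(M)-1 \geq 4$, apply Lemma~\ref{red_or_green_connected} and Proposition~\ref{sum_of_connectivities} inside the projective flat spanned by $F$; because $F$ is large (it has corank one in $PG(r(M)-1,2)$ restricted appropriately) one checks $r(F^c) = r(F)$ and $F^c$ is connected, using Lemma~\ref{parallel_connection_complement} or a direct counting argument as in Lemma~\ref{cosimple_case}. This contradicts $M \in \mathcal{M}_2$.

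The main obstacle I anticipate is the bookkeeping around the $2$-sum decomposition: carefully verifying that $M_X$ really inherits the hypotheses of Theorem~\ref{mainbin} (no bad series class, not a circuit), and handling the degenerate sub-cases where one side of the $2$-separation has rank exactly two — here $Y$ together with $p$ forms a circuit or a parallel-augmented structure, and one must argue directly that the resulting flat $F$ and its complement behave as needed, much as in the proof of Lemma~\ref{cosimple_case}. A secondary technical point is confirming that the complement $F^c$ has full rank $r(F)$ rather than smaller; this is where I would lean on Lemma~\ref{parallel_connection_complement} and the fact that $r(F) \geq 4$ forces $3$-connectivity of any affine geometry of that rank, so $F$ cannot contain $AG(r(F)-1,2)$ unless it is far from being a $2$-sum, which it is not.
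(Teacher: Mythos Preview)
Your approach is correct and reaches the same contradiction as the paper, but the route differs in one key respect. After reducing to the cosimple case via Lemma~\ref{cosimple_case} and writing $M = M_1 \oplus_2 M_2$ (with $r(M_1) \ge r(M_2)$ and any element parallel to the basepoint $p$ moved into $M_2$), the paper observes that \emph{both} sides must have rank at least three: a simple connected binary matroid of rank two is $U_{2,3}$, a circuit, and if either $M_i$ were a circuit (or a circuit with a parallel element at $p$) then $M$ would acquire a series pair, contradicting cosimplicity. With $r(M_2) \ge 3$ in hand, the paper works on the \emph{smaller} side and needs only Lemma~\ref{freedom}: since $M_2$ is not a circuit, $p$ is not free in $M_2$, so there is a non-spanning circuit $C_p$ of $M_2$ through $p$, and $F = {\rm cl}_M((E(M_1) \cup C_p) - p)$ is the desired connected proper flat. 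You instead work on the larger side $M_X$ and invoke Theorem~\ref{mainbin} to produce a connected hyperplane $H$ of $M_X$ through $p$, taking $F = (H - p) \cup Y$. Both constructions yield a $2$-separated connected proper flat whose projective closure is a parallel connection of two projective geometries, so Lemma~\ref{parallel_connection_complement} together with Proposition~\ref{sum_of_connectivities} gives $r(F^c) = r(F)$ and $F^c$ connected. The paper's route is lighter: it avoids the Kelmans--Seymour machinery entirely and sidesteps the bookkeeping you flag (simplicity of $M_X$, ruling out case~(iii) of Theorem~\ref{mainbin}). Your route is perfectly sound once those details are checked, and has the minor advantage of always producing a hyperplane of $M$ rather than a flat of possibly smaller rank.
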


\begin{proof}
Assume that neither (i) nor (ii) holds. Then, by Lemma~\ref{cosimple_case}, we may assume that $M$ is cosimple. 
Suppose that $M$ is not $3$-connected. Then $M = M_1 \oplus_2 M_2$ where $r(M_1) \geq r(M_2)$ and one of $M_1$ and $M_2$ may have an element parallel to the basepoint $p$ of the $2$-sum. When this occurs, we may assume, by moving an element from $M_2$ to $M_1$ if needed, that the element is parallel to $p$ in $M_2$. Since $M$ is cosimple, neither $M_1$ nor $M_2$ is either a circuit or a circuit with an element parallel to $p$. Hence $r(M_2) \ge 3$. As $M_2$ is not a circuit, by Lemma~\ref{freedom}, $M_2$ has a non-spanning circuit $C_p$ containing $p$. Then the closure $F$ of 
$(E(M_1) \cup C_p) - p$ is a connected proper flat of $M$. By Lemmas~\ref{sum_of_connectivities} and \ref{parallel_connection_complement}, $F^c$ is connected of rank $r(F)$, a contradiction. 
\end{proof}

The next result shows that a matroid $M$ in $\mathcal{M}_2$ such that neither $M$ nor $M^c$ is a circuit has rank at most five. 

\begin{theorem}
\label{main_theorem_rank_bound}
Let $M$ be a matroid in $\mathcal{M}_2$ such that $r(M) \geq 6$. Then $M$ or $M^c$ is a circuit.
\end{theorem}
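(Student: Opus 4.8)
The plan is to argue by contradiction: assume $M \in \mathcal{M}_2$ with $r(M) = r \ge 6$ and neither $M$ nor $M^c$ is a circuit. By Lemma~\ref{M_3_connected}, both $M$ and $M^c$ are then $3$-connected (the circuit and $P(U_{3,4},U_{3,4})$ alternatives being excluded, the latter because it has rank $3 < 6$). In particular, neither $M$ nor $M^c$ has a cocircuit of size less than three, so Theorem~\ref{mwu_result} applies to each: every element lies in at least two connected hyperplanes, and each matroid has at least four connected hyperplanes. View $M$ as a restriction of $PG(r-1,2)$ with green elements $G = E(M)$ and red elements $R = E(M^c)$. The strategy is to locate a connected proper flat $F$ of $M$ whose complement $F^c$ in its projective span is also connected and of the same rank, contradicting Theorem~\ref{equiv} and the minimality of $M$ (since such an $F$, being a proper flat, is a comatroid, yet the condition of Theorem~\ref{equiv} fails at the projective flat spanning $F$).

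First I would handle the case $r(M^c) < r$: then $G$ contains $AG(r-1,2)$, so $M$ is a spanning extension of the affine geometry. Here I would pick a connected hyperplane $H$ of $M$ (which exists by Theorem~\ref{mwu_result}) and show its complement $H_P - H$ inside the projective hyperplane $H_P$ spanning $H$ is also connected of rank $r-1$: the projective hyperplane $H_P$ meets $AG(r-1,2)$ in a set of rank $r-1$ that is itself (an affine translate of) $AG(r-2,2)$ union a point at infinity structure, forcing $H_P - H$ to be large enough — more than $2^{r-2}$ elements — hence connected of full rank $r-1$ by Lemma~\ref{enough_elements_imply_connected}. This contradicts minimality. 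By the symmetric argument (applied to $M^c$, which also lies in $\mathcal{M}_2$), we may henceforth assume $r(M) = r(M^c) = r$.

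Now with both $G$ and $R$ spanning $PG(r-1,2)$, the main step is: take a connected hyperplane $H$ of $M$ and let $H_P$ be the projective hyperplane it spans. The complement of $M|H$ inside $H_P$ is $H_P - H \subseteq H_P \cap R = R|H_P$. Since $M$ is a flat-minimal non-comatroid and $H$ is a proper flat, $M|H$ is a comatroid of rank $r-1$; by Theorem~\ref{equiv} applied at $H_P$ (a flat of $PG(r-1,2)$ with both colour classes of equal rank would be the bad case), we would be done if we could show $H_P - H$ is connected of rank $r-1$. Counting: $|H| \le 2^{r-2}$ would be needed for $H$ to be "small", but $H$ is a hyperplane of the \emph{connected} matroid $M$; I would instead estimate $|H_P - H| = |H_P| - |H| = (2^{r-1}-1) - |H|$, and use that $H$, as a connected non-spanning flat, has rank exactly $r-1$ and therefore $|H| \le 2^{r-1}-1$ — not immediately enough. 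The genuine content, and the main obstacle, is to show that for \emph{some} connected hyperplane $H$ of $M$ one has $|H| \le 2^{r-1} - 2^{r-2} - 1 = 2^{r-2}-1$, equivalently $|H_P - H| \ge 2^{r-2}$, so that $H_P - H$ is connected of rank $r-1$ by Lemma~\ref{enough_elements_imply_connected}.

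To extract such an $H$, I would use that $M$ has at least four connected hyperplanes $H_1,\dots,H_4$ (Theorem~\ref{mwu_result}) together with Lemma~\ref{mcnulty_wu_lemma2.10} to separate elements: if every connected hyperplane were "large" (complement of size $< 2^{r-2}$), then $G = E(M)$ would itself be forced to be small. Concretely, since the projective hyperplanes $H_{1,P},\dots,H_{4,P}$ cannot all contain a common corank-$2$ projective flat when there are enough of them, and since $G$ is covered by the union of any two projective hyperplanes together with the at most $2^{r-2}$ elements outside both, one derives $|G| \le |H_1| + |H_2| + 2^{r-2}$; if both $|H_i| < 2^{r-2}$ this gives $|G| < 2^{r-1} + 2^{r-2}$, while independently the symmetric bound on $R$ combined with $|G| + |R| = 2^r - 1$ forces a contradiction with $r \ge 6$. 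Thus some $H_i$ is small, $H_{i,P} - H_i$ is connected of rank $r-1$ by Lemma~\ref{enough_elements_imply_connected}, and $r(H_i) = r(H_{i,P}-H_i) = r-1$ with both connected — contradicting that $M \in \mathcal{M}_2$ via Theorem~\ref{equiv}. I expect the delicate point to be making the covering/counting argument robust enough to guarantee a \emph{connected} hyperplane (not merely a hyperplane) that is small, which is exactly why Theorem~\ref{mwu_result} and Lemma~\ref{mcnulty_wu_lemma2.10} — giving many connected hyperplanes and the ability to route them away from prescribed elements — are invoked rather than a bare hyperplane count.
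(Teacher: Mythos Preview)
Your overall target --- a connected proper flat whose projective complement is also connected of the same rank --- is correct, but the specific arguments you give do not reach it.

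First, the case $r(M^c)<r$ is argued in the wrong direction. If $R=E(M^c)$ spans only a projective hyperplane $K$, then for any projective hyperplane $H_P\neq K$ we have $H_P\cap R\subseteq H_P\cap K$, a flat of rank $r-2$, so $|H_P-H|=|H_P\cap R|\le 2^{r-2}-1$. The abundance of affine (green) points makes $H$ \emph{large} and its projective complement small, the opposite of what you claim. (Incidentally, $P(U_{3,4},U_{3,4})$ has rank $5$, not $3$; it is still excluded because $r\ge 6$, but not for the reason you give.)

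Second, the main counting step is internally inconsistent. You assume every connected hyperplane of $M$ is ``large'' (projective complement of size $<2^{r-2}$), then invoke the hypothesis ``if both $|H_i|<2^{r-2}$'', which is the contrary assumption. Under the actual assumption the inequality $|G|\le |H_1|+|H_2|+2^{r-2}$ gives no useful bound, and no contradiction follows. More seriously, Lemma~\ref{enough_elements_imply_connected} can never certify that \emph{both} colour classes in a projective hyperplane are connected, since $|H_P|=2^{r-1}-1<2(2^{r-2}+1)$; you need the green side connected for structural reasons (being a connected hyperplane of $M$) and the red side connected by size, and your counting never forces some connected hyperplane of $M$ or of $M^c$ to be that small.

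The paper's proof takes a genuinely different route. After reducing to both $M$ and $M^c$ cosimple, it assigns every projective hyperplane and every corank-$2$ projective flat a colour (green or red) according to which restriction is connected of full rank, and double-counts the ``cross edges'' in the bipartite incidence graph between hyperplanes and corank-$2$ flats. Two structural facts bound this count: a hyperplane contains at most one corank-$2$ flat of the opposite colour (via Theorem~\ref{oxwu_result}), and among the three hyperplanes through a corank-$2$ flat at least two share its colour (else a cocircuit of size at most two appears). One obtains $\tfrac12|G_1|\,|R_1|\le |G_1|+|R_1|$, forcing one of $|G_1|,|R_1|$ to be at most~$3$, which contradicts Theorem~\ref{mwu_result}. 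This global double-count is the missing idea; a local size estimate of the kind you attempt does not suffice.
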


\begin{proof}
 Let $P_{r(M)}$ denote the binary projective geometry of rank $r(M)$ such that the set $G$ of green elements of $P_{r(M)}$ corresponds to $M$ and the set $R$ of red elements of $P_{r(M)}$ corresponds to $M^c$.
Observe that, for each projective flat $F$ of $P_{r(M)}$ with $4 \leq r(F) < r(M)$, it follows by Lemma \ref{red_or_green_connected} and the minimality of $M$ that exactly one of $F|R$ and $F|G$ is connected of rank $r(F)$. We call $F$ \textbf{red} or \textbf{green} depending on whether $F|R$ or $F|G$ is connected of rank $r(F)$. We may assume that both $M$ and $M^c$ are cosimple otherwise we have our result by Lemma \ref{cosimple_case}. Let $F$ be a rank-$(r-2)$ flat of $P_{r(M)}$. Note that $F$ is contained in exactly three hyperplanes, say $H_1, H_2$, and $H_3$ of $P_{r(M)}$. We note the following.

\begin{sublemma}
\label{hsize2}
At least two of $H_1$, $H_2$, and $H_3$ have the same color as $F$.
\end{sublemma}

Suppose that $F$ is green and assume that $H_1$ and $H_2$ are red. It follows that each of $H_1 - F$ and $H_2-F$ contains at most one green element and so the green elements in $(H_1 \cup H_2) - F$ form a cocircuit of $M$ with at most two elements, a contradiction. 
Similarly, if $F$ is red, we get a cocircuit of $M^c$ of size at most two,  a contradiction.

Now let $G_1$ and $R_1$ be the sets of green and red hyperplanes, respectively, of $P_{r(M)}$. We note the following.

\begin{sublemma}
\label{nbr}
At most one of the rank-$(r-2)$ projective flats contained in a projective hyperplane $H$ has a color different from that of $H$.
\end{sublemma}

Observe that if two rank-$(r-2)$ projective flats contained in $H$ have the same color, then, by Theorem \ref{oxwu_result}, their join is colored the same as the two flats, a contradiction. Thus \ref{nbr} holds.

Let $G_2$ and $R_2$ be the sets of green and red projective flats of $P_{r(M)}$ of rank $r-2$. We consider the bipartite graph $B$ with vertex sets $G_1 \cup R_1$ and $G_2 \cup R_2$ such that a vertex $X$ in $G_1 \cup R_1$ is adjacent to a vertex $Y$ in $G_2 \cup R_2$ if $Y \subseteq X$. We count the number of \textbf{cross edges} of this graph, that is, the $G_1R_2$-edges and $G_2R_1$-edges. By \ref{nbr}, the number of $G_1R_2$-edges is at most $|G_1|$, and the number of $G_2R_1$-edges is at most $|R_1|$. Observe that each pair $\{H_G, H_R\}$ where $H_G \in G_1$ and $H_R \in R_1$ corresponds to either a $G_1R_2$- edge or a $G_2R_1$-edge $e$ depending on whether $H_G \cap H_R$ is red or green. Note that there is a third projective hyperplane $H'$ such that $H' \cap H_G = H' \cap H_R$ and, by \ref{hsize2}, has the same color as $H_G \cap H_R$. Observe that either $\{H', H_G\}$ or $\{H',H_R\}$ corresponds to the cross edge $e$ depending on whether $H_G \cap H_R$ is red or green. Hence  each cross edge corresponds to exactly two pairs $\{H_G,H_R\}$ where $H_G \in G_1$ and $H_R \in R_1$. As the number of cross edges is bounded above by $|G_1|+|R_1|$ and below by $\frac{1}{2}|G_1||R_1|$, we have $\frac{1}{2}|G_1||R_1| \leq |G_1|+|R_1|$. We may assume that $|R_1| \ge |G_1|$. Then $|G_1| \leq  \frac{2|G_1|}{|R_1|} + 2 \leq 3$, a contradiction to Theorem \ref{mwu_result}. 
\end{proof}

It remains to determine the members of $\mathcal{M}_2$ of rank $4$ or $5$. The next lemma takes care of the rank-$4$ case.

\begin{lemma}
\label{bin_rk_four}
A rank-$4$ binary matroid $M$ is a  member of $\mathcal{M}_2$ if and only if $M$ or $M^c$ is the cycle matroid of one of the six graphs in Figure~\ref{main_M_2}.
\end{lemma}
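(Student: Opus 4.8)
The plan is to work inside $PG(3,2)$, which has $15$ points and $15$ hyperplanes (planes), each plane being a Fano plane $PG(2,2)$, with any two planes meeting in a line of $3$ points. Let $(G,R)$ be the green/red $2$-colouring corresponding to $M$ and $M^c$. First I would dispose of the non-cosimple case: if $M$ (or, symmetrically, $M^c$) has a $2$-cocircuit, then $M$ has a nontrivial series class, so $M = M_1 \oplus_2 M_2$ with $M_2$ a circuit; running through Lemma~\ref{freedom}, Corollary~\ref{cct}, Theorem~\ref{mainbin} and the connectivity facts (Proposition~\ref{sum_of_connectivities}, Lemma~\ref{parallel_connection_complement}) exactly as in the proof of Lemma~\ref{cosimple_case}, one checks that in rank $4$ the only possibilities that are genuinely minimal non-comatroids are circuits and $P(U_{3,4},U_{3,4})$ — but a $6$-circuit and $P(U_{3,4},U_{3,4})$ both have rank $\le 4$, and I would verify that the rank-$4$ instances that survive are precisely among the six listed graphic matroids (in particular $P(U_{3,4},U_{3,4})$ is the cycle matroid of the $6$-cycle-with-a-chord, which should appear, or be ruled out, in Figure~\ref{main_M_2}). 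So after this step I may assume both $M$ and $M^c$ are cosimple, i.e. simple and $3$-connected is not yet forced but there are no $1$- or $2$-cocircuits.

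Next, the core of the argument is a finite but structured search over $2$-colourings of $PG(3,2)$. By Lemma~\ref{red_or_green_connected} (applied with $r=4$) at least one of $G,R$ is connected of rank $4$; by Lemma~\ref{not_comatroid_1} and Theorem~\ref{equiv}, $M \in \mathcal{M}_2$ forces that there is \emph{no} proper projective flat $F$ (necessarily a plane, since lower ranks are automatically comatroids by Lemma~\ref{comat_small_rank}) with $r(F\cap G) = r(F\cap R) = r(F)$ and both restrictions connected — wait, rather: $M\in\mathcal M_2$ means $M$ itself fails the comatroid condition while every proper flat satisfies it. So the two conditions I must enforce are: (a) for every plane $H$, at least one of $H\cap G$, $H\cap R$ is disconnected (this is $M|H$ being a comatroid, via Theorem~\ref{equiv} applied in rank $3$, using Lemma~\ref{comat_small_rank}(i) to get that the rank-$\le 3$ restriction is automatically a comatroid once the single plane-level obstruction is absent — in fact in rank $3$ every binary matroid is a comatroid by Lemma~\ref{comat_small_rank}(i), so condition (a) is \emph{automatic} and gives no information); and (b) there \emph{does} exist a plane $H$ of $PG(3,2)$ with $r(H\cap G)=r(H\cap R)=3$ and both $H|G$ and $H|R$ connected — this is exactly what makes $M$ fail to be a comatroid (together with the rank-$4$ level, but by Lemma~\ref{not_comatroid_1} if $r(M)=r(M^c)=4$ and both connected then $M$ is already a non-comatroid at the top level; if $r(M^c)<4$ then $M$ is an extension of $AG(3,2)$). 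I would split into these two top-level cases.

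In the case $r(M) = r(M^c) = 4$ with both connected, $M$ is automatically a non-comatroid by Lemma~\ref{not_comatroid_1}, so minimality just requires every plane to give a comatroid, which is automatic; hence I need the extra constraint that every \emph{proper flat} is a comatroid — vacuous in rank $\le 3$ — so actually \emph{every} such $M$ with $M,M^c$ both connected of rank $4$ lies in $\mathcal M_2$?? That cannot be right, so the real content is that such $M$ must be graphic (or co-graphic-in-$PG(3,2)$) — this is the point where I expect the genuine work: one must show that a simple connected rank-$4$ binary matroid whose complement in $PG(3,2)$ is also simple and connected, and which is not itself graphic with cycle matroid among the six listed, must \emph{fail} to have all proper flats comatroids, i.e. must already contain a smaller non-comatroid flat — but rank-$3$ flats are always comatroids, so the only way to fail is at a rank-$4$ flat, which is $M$ itself. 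So in fact $\mathcal M_2$ in rank $4$ would contain \emph{all} $M$ with $M,M^c$ simple connected of rank $4$. Therefore the real assertion of the lemma is the \emph{enumeration}: up to the complementation symmetry, the simple connected cosimple rank-$4$ binary matroids whose $PG(3,2)$-complement is also simple and connected are exactly the twelve matroids $\{M, M^c\}$ for the six graphs. Thus the proof is ultimately a classification: list the simple connected cosimple rank-$4$ binary matroids $M$ (equivalently, point sets in $PG(3,2)$ of size between $5$ and $10$ with no $\le 2$-element cocircuit in $M$ or in $M^c$, no three collinear, spanning, and $2$-connected), pair them by complementation, discard those where $M$ or $M^c$ is a comatroid (i.e. has a disconnected piece or rank drop), and identify the survivors as the six graphic ones.

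The main obstacle is organising this finite search so it is short: I would reduce the case analysis by the size symmetry $|G| + |R| = 15$, assume WLOG $|G| \le 7$, note $|G| \ge 5$ (fewer than $5$ points cannot span rank $4$ while being simple and $2$-connected with no small cocircuit), and then for each of $|G| \in \{5,6,7\}$ classify the $\mathrm{PGL}(4,2)$-orbits of point sets, checking connectivity of $G$ and of $R = PG(3,2)\setminus G$ and the cosimplicity conditions; standard facts about small binary matroids (there are few $3$-connected binary matroids on $\le 7$ elements of rank $4$: $M^*(K_{3,3})$-type spikes, $M(K_4)$ extended, the rank-$4$ wheel $M(\mathcal W_4)$, etc.) let me match each survivor to one of the six graphs or its complement. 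I expect the delicate part to be making sure no orbit is missed and that the complementation pairing is computed correctly — i.e. verifying $M^c$ for each of the six graphs lands among the twelve matroids — so I would present the argument as: (1) reduce to $M,M^c$ cosimple and both connected of rank $4$; (2) a counting/structure argument bounding $|G|$ and forcing a short list of orbits; (3) a table identifying each orbit and its complement with a graph from Figure~\ref{main_M_2}; (4) conclude.

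\begin{remark}
In the write-up I would lean on Corollary~\ref{mainbin2} and Theorem~\ref{mwu_result} to limit how the connected hyperplanes of $M$ and of $M^c$ inside $PG(3,2)$ can interact, which is what cuts the number of cases to check down to a manageable size.
\end{remark}
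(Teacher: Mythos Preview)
You eventually land on the correct reduction: in rank $4$, every proper flat has rank at most $3$ and hence is automatically a binary comatroid by Lemma~\ref{comat_small_rank}(i), so $M\in\mathcal M_2$ is equivalent to $M$ not being a comatroid, which in turn is equivalent to $M$ and $M^c$ both being connected of rank $4$. The lemma is then purely the enumeration of such pairs. That much is right, and your plan to assume $|E(M)|\le 7$ and classify is the right shape.

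However, several concrete errors would derail your write-up. First, your opening paragraph is misguided: the non-cosimple case is not something to ``dispose of'' --- the $5$-circuit $U_{4,5}$ itself (every pair a $2$-cocircuit) is one of the six matroids in the list. Likewise your later parenthetical constraints ``no $\le 2$-element cocircuit in $M$ or in $M^c$'' and ``no three collinear'' are both wrong and would exclude genuine members of $\mathcal M_2$. Second, $P(U_{3,4},U_{3,4})$ has rank $3+3-1=5$, not $4$; it is not a rank-$4$ object and does not belong in this lemma at all. Third, the machinery you propose (McNulty--Wu, $\mathrm{PGL}(4,2)$-orbits, Corollary~\ref{mainbin2}) is far heavier than needed.

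The paper's argument is a two-line case split you missed: with $|E(M)|\le 7$, ask whether $M$ has a spanning circuit (a $5$-circuit). If so, $M$ is $U_{4,5}$ or a $1$- or $2$-element extension of it; checking which such extensions have connected complement of rank $4$ yields the first row of Figure~\ref{main_M_2}. If not, then $|E(M)|\ge 6$ (a connected rank-$4$ matroid on $5$ elements is a $5$-circuit), so $|E(M)|\in\{6,7\}$ and $M^*$ is connected of rank at most $3$; for $|E(M)|=6$ one gets $M\cong M(K_{2,3})$ directly from $r(M^*)=2$, and $|E(M)|=7$ gives the final graph. No orbit classification, no connected-hyperplane theorems.
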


\begin{proof}
First assume that $M \in \mathcal{M}_2$.  We may assume that $|E(M)| \le |E(M^c)|$, so $|E(M)| \le 7$. If $M$ has a $5$-circuit, then $M$ is a $5$-circuit or a $1$- or $2$-element extension thereof. One can now check that $M$ is the cycle matroid of one of the graphs on the first line 
of Figure~\ref{main_M_2}. We may now assume that $M$ has no $5$-circuits. Thus $|E(M)|$ is $6$ or $7$. If $|E(M)| = 6$, then $M^*$ is connected of rank two, so $M$ is the cycle matroid of $K_{2,3}$. Finally, if $|E(M)| = 7$, then $M$ is the cycle matroid of the last graph in Figure~\ref{main_M_2}. The proof of the converse is immediate as every rank-$3$ binary matroid is a binary comatroid.
\end{proof}

The following result  from \cite{marowh} will be used to simplify the computational task of finding the rank-$5$ members of $\mathcal{M}_2$. The matroids in this theorem will only appear  in the proof of Lemma~\ref{rank_five_binary} and they will be defined there.

\begin{theorem}
\label{mayhew_result}
An internally $4$-connected binary matroid has no $M(K_{3,3})$- minor if and only if it is

\begin{enumerate}[label=(\roman*)]
    \item cographic; or
    \item isomorphic to a triangular or triadic M\"{o}bius matroid; or
    \item isomorphic to one of $18$ sporadic matroids of rank at most $11$.
\end{enumerate}

\end{theorem}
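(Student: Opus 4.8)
We sketch how one proves this classification; it is the main theorem of \cite{marowh}, so we only outline the shape of the argument. The natural plan is an induction on $|E(M)|$, where $M$ is an internally $4$-connected binary matroid with no $M(K_{3,3})$-minor. First I would invoke the classical fact that a binary matroid is cographic precisely when it has no minor in $\{M(K_5), M(K_{3,3}), F_7, F_7^*\}$ (the dual of Tutte's excluded-minor characterization of graphic matroids, with $U_{2,4}$ redundant in the binary case). Hence if $M$ is \emph{not} cographic, then, since it has no $M(K_{3,3})$-minor, it has a minor isomorphic to one of $M(K_5)$, $F_7$, or $F_7^*$; each of these is internally $4$-connected and, having rank at most $4$, cannot contain the rank-$5$ matroid $M(K_{3,3})$. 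These three play the role of seeds from which every non-cographic member of the class must be built.

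The engine of the argument would be a splitter-type (chain) theorem for internally $4$-connected binary matroids: if $N$ is an internally $4$-connected proper minor of an internally $4$-connected binary matroid $M$, then one can pass from $N$ to $M$ through a chain $N = N_0, N_1, \dots, N_k = M$ of internally $4$-connected matroids with $|E(N_{i+1})| - |E(N_i)|$ bounded, except when $M$ has a specific ``ladder-like'' structure. With such a tool I would then carry out the core case analysis: given an internally $4$-connected binary matroid $N_i$ with no $M(K_{3,3})$-minor, determine every internally $4$-connected one-step extension $N_{i+1}$ that still has no $M(K_{3,3})$-minor. The crucial phenomenon is that these extensions are severely constrained --- essentially the only way to keep growing indefinitely without creating an $M(K_{3,3})$-minor or destroying internal $4$-connectivity is to lengthen a M\"{o}bius-ladder configuration, which produces exactly the triangular and triadic M\"{o}bius matroids; every other branch of the analysis dies out after finitely many steps. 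The cographic matroids would be handled separately, as a minor-closed subclass that is automatically free of $M(K_{3,3})$-minors (since $K_{3,3}$ is nonplanar). Collecting the surviving finite branches --- an enumeration best done by computer --- would produce the $18$ sporadic matroids, and tracking ranks along the way confirms the bound of rank at most $11$. For the converse, I would simply verify that each matroid on the list is internally $4$-connected with no $M(K_{3,3})$-minor: immediate for cographic matroids, a direct structural check for the two M\"{o}bius families, and a (computer-assisted) case check for the sporadics.

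I expect the hard part to be the middle step. Internal $4$-connectivity is much less rigid than $3$-connectivity, so Seymour's Splitter Theorem does not apply directly; obtaining a usable chain theorem means isolating and quarantining the ``ladder-type'' exceptions, and even granted such a theorem the ensuing extension analysis is long and delicate enough that the argument in \cite{marowh} is necessarily computer-assisted. That is precisely why we invoke the theorem here rather than reproving it.
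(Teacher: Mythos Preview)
The paper does not prove this theorem at all: it is stated as a quoted result from \cite{marowh} and used as a black box in the proof of Lemma~\ref{rank_five_binary}. Your proposal is consistent with this, since you explicitly identify the statement as the main theorem of \cite{marowh} and say you are invoking it rather than reproving it; the sketch you add of the chain-theorem strategy is a reasonable gloss on the shape of that argument, but it goes beyond anything the present paper attempts.
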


\begin{lemma}
\label{rank_five_binary}
Let $M$ be a matroid in $\mathcal{M}_2$ such that $r(M)=5$. Then $M$ or $M^c$ is not cosimple.
\end{lemma}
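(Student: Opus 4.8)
The plan is to argue by contradiction: assume both $M$ and $M^c$ are cosimple, so each is a simple connected binary matroid of rank $5$ with no cocircuits of size less than three, and with the minimality property of $\mathcal{M}_2$. The first goal is to severely restrict the size of the ground sets. By Lemma~\ref{cover} and the minimality of $M$, for each projective flat $F$ of $PG(4,2)$ with $4 \le r(F) < 5$ — that is, each projective hyperplane — exactly one of $F|G$ and $F|R$ is connected of rank $r(F)$, and we call $F$ green or red accordingly. Since $PG(4,2)$ has $31$ hyperplanes and, by Theorem~\ref{mwu_result}, each of the cosimple connected matroids $M$ and $M^c$ has at least four connected hyperplanes, we get lower bounds $|G_1|, |R_1| \ge 4$ where $G_1, R_1$ are the green and red hyperplanes. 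Next I would run a counting argument analogous to the one in the proof of Theorem~\ref{main_theorem_rank_bound}, but now at the level of hyperplanes versus rank-$3$ projective flats: each rank-$3$ flat lies in exactly three hyperplanes, at least two of which share its color (else one of $M$, $M^c$ gets a small cocircuit, exactly as in \ref{hsize2}); and by Theorem~\ref{oxwu_result} a hyperplane contains at most one rank-$3$ flat of the opposite color (as in \ref{nbr}). Counting cross edges then forces $|G_1|$ and $|R_1|$ to both be small — roughly $|G_1||R_1| \le 2(|G_1|+|R_1|)$ — which, combined with $|G_1|,|R_1| \ge 4$, is already a contradiction, or at worst pins us down to a tiny number of possibilities (e.g. $|G_1| = |R_1| = 4$).

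If the counting does not close immediately in rank $5$, the fallback is to bound $|E(M)|$ directly and then enumerate. Since a connected hyperplane of $M$ is a connected rank-$4$ binary matroid that is a flat of $M$, and such a flat must itself be a $GF(2)$-comatroid by minimality, while Lemma~\ref{bin_rk_four} and Theorem~\ref{equiv} tell us exactly which rank-$4$ binary matroids are comatroids, I can read off an upper bound on the size of any connected hyperplane, hence an upper bound on $|E(M)|$ (a hyperplane plus at most a controlled number of elements off it). With $|E(M)| + |E(M^c)| = 31$ and both at most, say, $15$ or $16$, we land in a finite check. To cut down the casework I would invoke Theorem~\ref{mayhew_result}: arguing that $M$, being $3$-connected (by Lemma~\ref{M_3_connected} with $r(M)=5$ — though note that lemma is stated for $r(M)\ge 6$, so I would either prove the $r=5$ analogue or use the $3$-connectivity that falls out of cosimplicity plus the connected-hyperplane structure) and binary of modest size, either has an $M(K_{3,3})$-minor or is on the Mayhew--Royle--Whittle list; one then checks the internally-$4$-connected pieces (cographic matroids, small Möbius matroids, the $18$ sporadic matroids of rank $\le 11$) against the comatroid criterion of Theorem~\ref{equiv}, and separately handles the $3$-connected-but-not-internally-$4$-connected case via $3$-separations, using that each side of a $3$-separation closes to a flat that must be a comatroid.

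The main obstacle I anticipate is the last step: even after the counting argument bounds the ranks and sizes, verifying that no rank-$5$ cosimple binary matroid of the surviving sizes is induced-restriction-minimal non-comatroid requires checking the flat condition of Theorem~\ref{equiv} on a genuinely finite but not tiny list. The role of Theorem~\ref{mayhew_result} is precisely to make this list manageable, so the delicate part is correctly reducing to the internally $4$-connected case (dealing with $1$-, $2$-, and $3$-separations and the basepoints/parallel elements they introduce, much as in Lemmas~\ref{cosimple_case} and \ref{M_3_connected}) and then confirming that each sporadic or Möbius or cographic candidate either has a bad proper flat — a circuit of size $\ge 6$, a $P(U_{3,4},U_{3,4})$, or one of the six rank-$4$ graphic matroids of Figure~\ref{main_M_2}, or their complements — or is itself a comatroid. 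I expect the conclusion to be that the surviving possibilities all contain such a forbidden flat, contradicting $M \in \mathcal{M}_2$, so that no cosimple rank-$5$ member of $\mathcal{M}_2$ with cosimple complement exists.
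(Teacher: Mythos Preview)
Your primary approach---porting the hyperplane/rank-$(r-2)$-flat counting argument of Theorem~\ref{main_theorem_rank_bound} down to $r(M)=5$---does not go through, and the obstruction is intrinsic rather than technical. The whole argument rests on every rank-$(r-2)$ projective flat being unambiguously red or green. In the paper's Theorem~\ref{main_theorem_rank_bound} this is guaranteed because those flats have rank at least~$4$, so Lemma~\ref{red_or_green_connected} applies. At $r(M)=5$ your rank-$(r-2)$ flats are Fano planes, and Proposition~\ref{sum_of_connectivities} explicitly records the exception $(q,r)=(2,3)$: a $2$-colouring of $F_7$ with $\{G,R\}=\{U_{3,3},\,U_{2,3}\oplus U_{1,1}\}$ has \emph{both} sides disconnected. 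Such ``gray'' planes break the cross-edge count in both directions: the lower bound $\tfrac12|G_1||R_1|$ fails because the intersection $H_G\cap H_R$ of a green and a red hyperplane can be gray (yielding no cross edge at all), and the clean upper bound from the analogue of \ref{nbr} is meaningless for gray flats. So the inequality $|G_1||R_1|\le 2(|G_1|+|R_1|)$ is simply not available here.

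Your fallback sketch is closer to what the paper actually does, but it is missing the two load-bearing steps. First, Theorem~\ref{mayhew_result} classifies \emph{internally $4$-connected} binary matroids, not merely $3$-connected ones; the paper establishes internal $4$-connectivity (and the absence of triads) via a separate argument (sublemma \ref{binary_towards_int_4_conn_sublemma}) using Lemma~\ref{mcnulty_wu_lemma2.10}, and this step is not a formality. Second, after disposing of the cographic, M\"obius, and sporadic cases by hand, the paper is left with the case where both $M$ and $M^c$ extend $M(K_{3,3})$; this is then reduced to a handful of explicit $12$-element extensions of $M(K_{3,3})$ and finished by a SageMath hyperplane-counting check---a genuine computer verification, not a casual enumeration. (Incidentally, Lemma~\ref{M_3_connected} is stated for $r(M)\ge 5$, so it does apply directly; your parenthetical hedge is unnecessary.)
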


\begin{proof}

Assume that $M$ and $M^c$ are cosimple.  Then, by Lemma~\ref{M_3_connected}, both $M$ and $M^c$ are $3$-connected.  By Lemma \ref{red_or_green_connected}, for every hyperplane $H$ of $P_5$, exactly one of $H|G$ or $H|R$ is connected. We call the  $H$ red or green depending on whether $H|R$ or $H|G$ is connected of rank four. We first show that

\begin{sublemma}
\label{binary_towards_int_4_conn_sublemma}
$E(M)$ has no set $X$ of rank $3$ such that $r(E(M)-X) = 4$.
\end{sublemma}

Denote $E(M)-X$ by $Y$ and assume that $r(X)=3$ and $r(Y)=4$. Let $Y_P$ and $X_P$ denote the projective flats spanned by $Y$ and $X$, respectively. Observe that $Y_P \cap X_P$ is a projective line, say, $L = \{x,y,z\}$. As $M$ has no $2$-cocircuits, it follows that $E(M)-Y_P$ has at least three elements, including say $b_1,$ $b_2,$ and $b_3$ such that $\{x,b_1,b_2\}$ is a projective line, say $L_1$. Let $M'$ be the matroid obtained from $M|Y_P$ by adding $x,y,$ and $z$ if they are not already  in $E(M)$. Note that, for $k$ in $\{1,2\}$, a $k$-separation of $M'$ induces a $k$-separation of $M$ and therefore $M'$ is $3$-connected. By Lemma \ref{mcnulty_wu_lemma2.10}, $M'$ has a connected hyperplane $H$ that contains $x$ but not $y$ or $z$. Observe that either $P(H,L_1)$ or $P(H,L_1)\ba x$ is a connected hyperplane $H'$ of $M$ depending on whether or not $x$ is an element of $E(M)$. By Proposition \ref{sum_of_connectivities} and Lemma \ref{parallel_connection_complement}, it follows that the complement of $H'$ is connected of rank four, a contradiction.

\begin{sublemma}
\label{binary_hyperplane_bound_sublemma}
A connected hyperplane of $M$ has at least seven elements.
\end{sublemma}

Suppose such a connected hyperplane has at most six elements. Then its complement in $PG(3,2)$ has at least nine elements. By Lemma \ref{enough_elements_imply_connected}, this complement is connected of rank four, a contradiction. Thus \ref{binary_hyperplane_bound_sublemma} holds.

By  \ref{binary_towards_int_4_conn_sublemma}, it follows that $M$ is internally $4$-connected and has no triads. By Theorem \ref{mwu_result}, $M$ has a connected hyperplane, so $|E(M)| \geq 11$ by \ref{binary_hyperplane_bound_sublemma}. 

Suppose that $M$ has no $M(K_{3,3})$-minor. By Theorem \ref{mayhew_result}, we first suppose that $M$ is cographic and therefore $E(M) \leq 12$ \cite{jaegar}. Since $M$ has no cocircuits of size less than four, it follows that every hyperplane of $M$ has at most eight elements. Therefore, for a connected hyperplane $H$ of $M$, by \ref{binary_hyperplane_bound_sublemma}, $|H|$ is either seven or eight. It follows that  $H^c$ has seven or eight elements. As this complement is either disconnected or has rank at most three, it is either $F_7$ or $F_7 \oplus U_{1,1}$. This implies that $H$ has $F_7^*$ as a restriction. Thus $M$ is not regular, a contradiction.

Next suppose that $M$ is a triangular or a triadic M\"{o}bius matroid. Since $M$ has no triads, $M$ is the rank-$5$ triangular M\"{o}bius matroid, $\Delta_5$, and has the  reduced representation \cite{marowh} shown on the left of Figure~\ref{d5m511}. Observe that $\{e, j, k, l, m\}$ is a connected hyperplane of $M$ of size five, a contradiction to \ref{binary_hyperplane_bound_sublemma}. We may now assume that $M$ is a rank-$5$ sporadic matroid, so $M$ is isomorphic to a matroid in  $\{M_{5,11}, T_{12}/e,  M_{5,12}^a, M_{5,12}^b,  M_{5,13}\}$ \cite{marowh}. Since $M_{5,11}$ has a triad, $M$ is not isomorphic to $M_{5,11}$. When $M$ is isomorphic to $T_{12}/e$, it has the   representation shown on the right of 
Figure~\ref{d5m511}. Then $\{f,g,h,i,j\}$ is a connected hyperplane of $M$, a contradiction to \ref{binary_hyperplane_bound_sublemma}.

\begin{figure}[ht]
\begin{minipage}[b]{0.45\linewidth}
\centering\[
\bordersquare
{
   & f & g & h & i & j & k & l & m\cr
a & 1 & 0 & 0 & 0 & 1 & 0 & 0 & 1  \cr
b & 0 & 1 & 0 & 0 & 1 & 1 & 0 & 0  \cr
c & 0 & 0 & 1 & 0 & 0 & 1 & 1 & 0 \cr
d & 0 & 0 & 0 & 1 & 0 & 0 & 1 & 1 \cr
e & 1 & 1 & 1 & 1 & 0 & 0 & 0 & 1 \cr
}
 \]
 \end{minipage} 
\begin{minipage}[b]{0.45\linewidth}
\centering
\[
\bordersquare
{
   & f & g & h & i & j & k \cr
a & 1 & 0 & 0 & 0 & 1 & 1 \cr
b & 1 & 1 & 0 & 0 & 0 & 1  \cr
c & 0 & 1 & 1 & 0 & 0 & 1 \cr
d & 0 & 0 & 1 & 1 & 0 & 1 \cr
e & 0 & 0 & 0 & 1 & 1 & 1 \cr
}
\]
\end{minipage}
\caption{$\Delta_5$ and $T_{12}/e$.}
\label{d5m511}
\end{figure}

If $M$ is isomorphic to $M_{5,12}^a$, then $M$ has the   representation shown on the left of Figure~\ref{m512}. Then $M$ has  $\{f,g,h,i,j,l\}$ as a connected hyperplane of $M$,  contradicting \ref{binary_hyperplane_bound_sublemma}. 
Similarly, if $M$ is isomorphic to $M_{5,12}^b$, then $M$ has the  representation shown on the right of Figure~\ref{m512}. Then $\{f,g,h,i,j,l\}$ is a connected hyperplane of $M$, again contradicting \ref{binary_hyperplane_bound_sublemma}.

\begin{figure}[hb]
\begin{minipage}[b]{0.45\linewidth}
\centering\[
\bordersquare
{
  &  f & g & h & i & j & k & l \cr
a & 1 & 0 & 0 & 0 & 1 & 1 & 0 \cr
b & 1 & 1 & 0 & 0 & 0 & 0 & 1 \cr
c & 0 & 1 & 1 & 0 & 0 & 1 & 0 \cr
d & 0 & 0 & 1 & 1 & 0 & 0 & 1 \cr
e & 0 & 0 & 0 & 1 & 1 & 1 & 0 \cr
}
 \]
 \end{minipage} 
\begin{minipage}[b]{0.45\linewidth}
\centering
\[
\bordersquare
{
  &  f & g & h & i & j & k & l \cr
a & 1 & 0 & 0 & 0 & 1 & 1 & 0 \cr
b & 1 & 1 & 0 & 0 & 0 & 1 & 1 \cr
c & 0 & 1 & 1 & 0 & 0 & 1 & 1\cr
d & 0 & 0 & 1 & 1 & 0 & 1 & 1 \cr
e & 0 & 0 & 0 & 1 & 1 & 1 & 1 \cr
}
\]
\end{minipage}
\caption{$M_{5,12}^a$ and $M_{5,12}^b$.}
\label{m512}
\end{figure}

We may now assume that $M$ is isomorphic to $M_{5,13}$ and therefore has the  representation in Figure~\ref{m513}. Observe that $\{a,b,d,e,f,i,j\}$ is a connected hyperplane, $H$, of $M$ such that $H^c$ is also connected of rank $4$, a contradiction. We conclude that $M$ is not one of the five rank-$5$ sporadic matroids.

\begin{figure}
\[
  \bordersquare
  {
 & f & g & h & i & j & k & l & m \cr
a & 1 & 0 & 0 & 0 & 1 & 1 & 0 & 1 \cr
b & 1 & 1 & 0 & 0 & 0 & 1 & 1 & 1 \cr
c & 0 & 1 & 1 & 0 & 0 & 1 & 1 & 1 \cr
d & 0 & 0 & 1 & 1 & 0 & 1 & 1 & 1 \cr
e & 0 & 0 & 0 & 1 & 1 & 1 & 1 & 0 \cr
}
 \]
 \caption{$M_{5,13}$.}
\label{m513}
 \end{figure}

We may now assume that  $M$ has an $M(K_{3,3})$-minor and so $M$ is an extension of $M(K_{3,3})$. By symmetry, $M^c$ is also an extension of $M(K_{3,3})$. Since $P_5$ has $31$ hyperplanes and $M(K_{3,3})$ has six connected hyperplanes, we deduce that 

\begin{sublemma}
\label{green_hyperplane_bound}
$M$ has at most $25$ connected hyperplanes.
\end{sublemma}

Figure~\ref{k33} shows the vertex-edge incidence matrix of $K_{3,3}$, which  is a binary representation for $M(K_{3,3})$. Although $r(M(K_{3,3}))=5$, we use this representation because it displays the symmetries of $M(K_{3,3})$  well. The $P_5$ into which $M$ is embedded is spanned by $\{a,b,c,d,e,f,g,h,i\}$.

\begin{figure}[b]
\[
\begin{blockarray}{cccccccccc}
 & a & b & c & d & e & f & g & h & i  \\
\begin{block}{c(ccccccccc)}
 v_1 & 1 & 1 & 1 & 0 & 0 & 0 & 0 & 0 & 0   \\
 v_2 & 0 & 0 & 0 & 1 & 1 & 1 & 0 & 0 & 0   \\
 v_3 & 0 & 0 & 0 & 0 & 0 & 0 & 1 & 1 & 1  \\
 v_4 & 1 & 0 & 0 & 1 & 0 & 0 & 1 & 0 & 0  \\
 v_5 & 0 & 1 & 0 & 0 & 1 & 0 & 0 & 1 & 0  \\
 v_6 & 0 & 0 & 1 & 0 & 0 & 1 & 0 & 0 & 1  \\
\end{block}
\end{blockarray}
 \]
 \caption{The vertex-edge incidence matrix of $K_{3,3}$.}
\label{k33} 
\end{figure}

\begin{figure}[h]
\center
\input{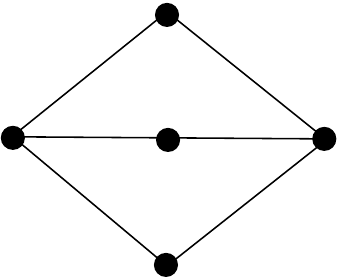_t}
\caption{The labelled $K_{2,3}$ corresponding to the hyperplane $H_1$.}
\label{k23fig}
\end{figure}

For $1 \leq i \leq 6$, let $H_i$ be the connected hyperplane of $M(K_{3,3})$ that is complementary to the vertex bond of $v_i$ in $K_{3,3}$, and let $H'_i$ be the hyperplane of $P_5$ spanned by $H_i$. As $H'_1|G$ is an extension of $M(K_{2,3})$, it follows that $H'_1|R$ is a restriction of the complement of $M(K_{2,3})$ in $P_4$. This complement is isomorphic to $P(F_7,U_{2,3})$, where $p$ is the basepoint of the parallel connection. 
The $K_{2,3}$ corresponding to $H_1$ is shown in Figure~\ref{k23fig}. The matroid $P(F_7,U_{2,3})$ that is the complement of this $M(k_{2,3})$ is labelled as in Figure~\ref{f7fig}. Here elements are labelled by the corresponding vectors. 
Because $H'_1|R$ is not connected of rank $4$, it is isomorphic to a restriction of either $U_{2,3} \oplus U_{2,3}$ or $F_7 \oplus U_{1,1}$. 
Assume the former. Then the red elements of $H'_1$ are contained in the $2$-separating triangle in $P(F_7, U_{2,3})$ and one of the four triangles of $F_7$ that avoid $p$, where $p$ corresponds to the vector $d+g$. Thus we have the following four cases:

\begin{enumerate}[label=(\roman*)]
    \item $e+g, e+i,$ and $d+i$ are green;
    \item $e+g, g+i,$ and $e+f$ are green;
    \item $d+e, g+i,$ and $e+i$ are green;
    \item $d+e, d+i,$ and $e+f$ are green.
\end{enumerate}
By permuting the vertices $v_4, v_5$, and $v_6$, we see that the last three cases are symmetric. Thus $M$ is an extension of one of the two matroids whose representations are shown in  Figure~\ref{m2}.

\begin{figure}[h]
\center
\input{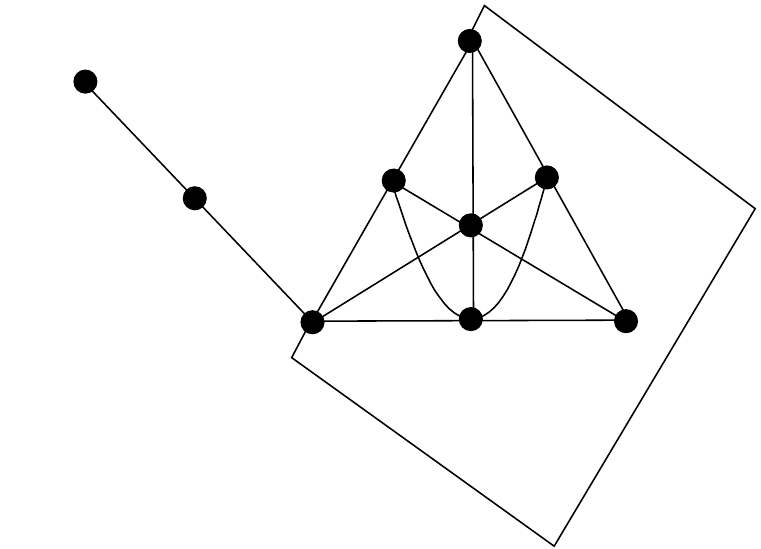_t}
\caption{The labelled  $P(F_7, U_{2,3})$ corresponding to the complement of $H_1$ in $P_4$.}
\label{f7fig}
\end{figure}

\begin{figure}
\[
  \bordersquare
  {
 &a & b & c & d & e & f & g & h & i & g+e & d+i & i+e  \cr
  &1 & 1 & 1 & 0 & 0 & 0 & 0 & 0 & 0 & 0 & 0 & 0  \cr
  &0 & 0 & 0 & 1 & 1 & 1 & 0 & 0 & 0 & 1 & 1 & 1  \cr
  &0 & 0 & 0 & 0 & 0 & 0 & 1 & 1 & 1 & 1 & 1 & 1  \cr
  &1 & 0 & 0 & 1 & 0 & 0 & 1 & 0 & 0 & 1 & 1 & 0  \cr
  &0 & 1 & 0 & 0 & 1 & 0 & 0 & 1 & 0 & 1 & 0 & 1   \cr
  &0 & 0 & 1 & 0 & 0 & 1 & 0 & 0 & 1 & 0 & 1 & 1  \cr
}
 \]

\[
  \bordersquare
  {
& a & b & c & d & e & f & g & h & i & g+e & g+i & e+f   \cr
 & 1 & 1 & 1 & 0 & 0 & 0 & 0 & 0 & 0 & 0 & 0 & 0  \cr
  &0 & 0 & 0 & 1 & 1 & 1 & 0 & 0 & 0 & 1 & 0 & 0   \cr
  &0 & 0 & 0 & 0 & 0 & 0 & 1 & 1 & 1 & 1 & 0 & 0   \cr
 & 1 & 0 & 0 & 1 & 0 & 0 & 1 & 0 & 0 & 1 & 1 & 0   \cr
  &0 & 1 & 0 & 0 & 1 & 0 & 0 & 1 & 0 & 1 & 0 & 1  \cr
 & 0 & 0 & 1 & 0 & 0 & 1 & 0 & 0 & 1 & 0 & 1 & 1   \cr
}
 \]
 \caption{Two extensions of $M(K_{3,3})$.}
 \label{m2}
 \end{figure}
 
 
 \begin{algorithm}
\renewcommand{\thealgorithm}{}
\label{pseudocode}
\caption{Counting hyperplanes of $M$ and $M^c$}

\begin{algorithmic}
\REQUIRE Input a simple binary matroid $N$ of rank five
\STATE Set $i \leftarrow 0$, $j \leftarrow 0$

\FOR{a subset $S$ of $P_5 - E(N)$}

\STATE Set $i \leftarrow 0$, $j \leftarrow 0$

\STATE Set $M = P_5|(E(N) \cup S)$

\STATE $i \leftarrow $  number of connected hyperplanes of $M$.

\IF{$i < 26$}

\STATE $j \leftarrow$ number of connected hyperplanes of $M^c$.

\ENDIF

\ENDFOR

\IF {$i+j < 32$}

\STATE print $M$
\ENDIF

\end{algorithmic}
\end{algorithm}


Using SageMath \cite{sage}, we apply the given hyperplane-counting algorithm  to the   two matroids whose representations are given in Figure~\ref{m2}. This shows that, for every extension of these matroids, either the number of green hyperplanes exceeds $25$, a contradiction to \ref{green_hyperplane_bound}; or the sum of the number of red and green hyperplanes exceeds $31$, the number of hyperplanes of $P_5$, and again we have  a contradiction. Note that, when we run the above algorithm with $|S| = 10$, we do not obtain any matroids. Thus the search can be restricted to sets $S$ with at most ten elements. 
 
Next suppose that $H'_1|R$ is a restriction of a copy of $F_7 \oplus U_{1,1}$. First we assume that these red elements are contained in the union of the $2$-separating triangle of $P(F_7,U_{2,3})$ with another triangle through $p$ and one further point, $z$. Although there are three such lines through $p$ and four choices for $z$ for each, permuting $v_4, v_5$, and $v_6$ reduces these twelve cases to the  following two cases:

\begin{enumerate}[label=(\roman*)]
    \item $e+i, e+f,$ and $g+i$ are green;
    \item $e+i, e+f$ and $d+i$ are green.
\end{enumerate}
Thus $M$ is an extension of one of the two matroids whose representations are shown in  Figure~\ref{extra}. Again using SageMath \cite{sage} and applying the given hyperplane-counting algorithm  to these   two matroids, we see that, for every extension of these matroids, either the number of green hyperplanes exceeds $25$,  or the sum of the number of red and green hyperplanes exceeds $31$, so we obtain a contradiction. As in the previous case check,    we find that we  can   restrict the search to sets $S$ with at most ten elements. 

\begin{figure}[t]
\[
  \bordersquare
  {
 &a & b & c & d & e & f & g & h & i & e+ i & e+f & g+i  \cr
  &1 & 1 & 1 & 0 & 0 & 0 & 0 & 0 & 0 & 0 & 0 & 0  \cr
  &0 & 0 & 0 & 1 & 1 & 1 & 0 & 0 & 0 & 1 & 0 & 0  \cr
  &0 & 0 & 0 & 0 & 0 & 0 & 1 & 1 & 1 & 1 & 0 & 0  \cr
  &1 & 0 & 0 & 1 & 0 & 0 & 1 & 0 & 0 & 0 & 0 & 1  \cr
  &0 & 1 & 0 & 0 & 1 & 0 & 0 & 1 & 0 & 1 & 1 & 0   \cr
  &0 & 0 & 1 & 0 & 0 & 1 & 0 & 0 & 1 & 1 & 1 & 1  \cr
}
 \]

\[
  \bordersquare
  {
& a & b & c & d & e & f & g & h & i & e+i & e+f & d+i   \cr
 & 1 & 1 & 1 & 0 & 0 & 0 & 0 & 0 & 0 & 0 & 0 & 0  \cr
  &0 & 0 & 0 & 1 & 1 & 1 & 0 & 0 & 0 & 1 & 0 & 1   \cr
  &0 & 0 & 0 & 0 & 0 & 0 & 1 & 1 & 1 & 1 & 0 & 1   \cr
 & 1 & 0 & 0 & 1 & 0 & 0 & 1 & 0 & 0 & 0 & 0 & 1   \cr
  &0 & 1 & 0 & 0 & 1 & 0 & 0 & 1 & 0 & 1 & 1 & 0  \cr
 & 0 & 0 & 1 & 0 & 0 & 1 & 0 & 0 & 1 & 1 & 1 & 1   \cr
}
 \]
 \caption{Two more extensions of $M(K_{3,3})$.}
 \label{extra}
 \end{figure}

We may now assume that, for $1 \leq i \leq 6$, each $H'_i|R$ is a restriction of $F_7 \oplus U_{1,1}$ where the coloop in $F_7 \oplus U_{1,1}$ is one of the elements $d+e+f$ or $g+h+i$. Then,  for the red elements of each of $H'_1, H'_2, H'_3$ to behave in this way, we must have at least two points in $\{a+b+c, d+e+f, g+h+i\}$ colored green. Similarly, for $H'_4, H'_5, H'_6$, we must have at least two points in $\{a+d+g, b+e+h, c+f+i\}$ colored green. Using symmetry, we may assume that $a+b+c, d+e+f, a+d+g, b+e+h$ are green. Thus $M$ is an extension of the  matroid whose representation is shown in Figure~\ref{f77}. Using SageMath \cite{sage}, we see that this matroid has exactly $27$ connected hyperplanes, a contradiction. Hence the  lemma holds.
\end{proof}

\begin{figure}[b]
\[
  \bordersquare
  {
& a & b & c & d & e & f & g & h & i &     &  &   &   \cr
&  1 & 1 & 1 & 0 & 0 & 0 & 0 & 0 & 0 & 1 & 0 & 1 & 1  \cr
&  0 & 0 & 0 & 1 & 1 & 1 & 0 & 0 & 0 & 0 & 1 & 1 & 1  \cr
&  0 & 0 & 0 & 0 & 0 & 0 & 1 & 1 & 1 & 0 & 0 & 1 & 1   \cr
 & 1 & 0 & 0 & 1 & 0 & 0 & 1 & 0 & 0 & 1 & 1 & 1 & 0   \cr
&  0 & 1 & 0 & 0 & 1 & 0 & 0 & 1 & 0 & 1 & 1 & 0 & 1   \cr
 & 0 & 0 & 1 & 0 & 0 & 1 & 0 & 0 & 1 & 1 & 1 & 0 & 0   \cr
}
 \]
 \caption{$M$ is an extension of this matroid whose last four columns are $a+b+c$, $d+e+f$, $a+d+g$, and $b+e+h$.}
 \label{f77}
 \end{figure}

We can now prove our main results for binary comatroids.

\begin{proof}[Proof of Theorem~\ref{main_binary}.] 
Let $M$ be a binary comatroid. 
Then, by Theorem~\ref{equiv}, every flat of each of $M$ and $M^c$ is a binary comatroid. Thus, by Corollary~\ref{cct}, none of these flats is a circuit of size exceeding five. By Proposition~\ref{sum_of_connectivities} and Lemma~\ref{parallel_connection_complement}, none of the flats is isomorphic to $P(U_{3,4},U_{3,4})$. Finally, by  Lemma~\ref{bin_rk_four}, none of the flats is isomorphic to the cycle matroid of one of the six graphs in  
Figure~\ref{main_M_2}. 

Conversely, suppose that $M$ is a binary matroid that is not a comatroid. Then $M$ has a flat $N$ that is a member of $\mathcal{M}_2$. By 
Lemma~\ref{comat_small_rank}, $r(N) \ge 4$. By Lemma~\ref{bin_rk_four}, if $r(N) = 4$, then $N$ or $N^c$ is isomorphic to the cycle matroid of one of the six graphs in  Figure~\ref{main_M_2}. Thus $M$ or $M^c$ has a flat that is isomorphic to one of these cycle matroids. We may now assume that $r(N) \ge 5$. If $r(N) \ge 6$, then, by Theorem~\ref{main_theorem_rank_bound}, $N$ or $N^c$ is a circuit, so $M$ or $M^c$ has a circuit as a flat. Finally, if $r(N) = 5$, then, by  Lemmas~\ref{M_3_connected} and \ref{rank_five_binary}, we get that $M$ or $M^c$ has as a flat either a circuit or $P(U_{3,4},U_{3,4})$. 
\end{proof}

 Because we are only dealing with simple matroids, in the next proof and from now on, whenever we write $M/e$, we shall mean ${\rm si}(M/e)$.

\begin{proof}[Proof of Corollary~\ref{cor_binary}.] 
By Lemma~\ref{comat_small_rank}, every binary matroid of rank at most three is a comatroid. Thus, 
in view of Theorem~\ref{main_binary}, it suffices to prove that $M$ is an induced-minor-minimal binary non-comatroid when $M^c$ is either a circuit of size at least six or is isomorphic to $P(U_{2,3},U_{2,3})$. Consider the first case. Since both $M$ and $M^c$ are connected, $M$ is not a binary comatroid. 
Observe that, for any proper flat $F$ of $M$, the matroid $(M|F)^c$ is free and so, by Lemma~\ref{closure_under_flats}, $M|F$ is a binary comatroid. Note that, in view of Lemma \ref{closure_under_flats} and Proposition \ref{comat_contract}, it is enough to show that $M/e$ is a binary comatroid for all $e$ in $E(M)$. Because there is at most one red element on any line though $e$, we see that $(M/e)^c$ has a most one point, so $M/e$ is a comatroid. 


Now suppose that $M^c \cong P(U_{2,3},U_{2,3})$. Again, it is enough to show that $M/e$ is a binary comatroid for all $e$ in $E(M)$. If $e$ is in the projective closure of one of the $4$-circuits of $P(U_{2,3},U_{2,3})$, then $(M/e)^c$ has a coloop. Thus $(M/e)^c$  is disconnected with each component  having rank at most three, so it is a comatroid. If $e$ is not in one of these projective closures, then $(M/e)^c$ has a most one point and, again, $M/e$ is a comatroid.
\end{proof}

\section{Induced-restriction-minimal non-$GF(3)$-comatroids}
\label{tern}

In this section, we prove Theorem~\ref{main_ternary} and Corollary~\ref{cor_ternary}.

\begin{lemma}
\label{ternary_exceptional_case}
Let $M$ be a matroid in $\mathcal{M}_3$ such that $r(M) \geq 4$ and $M$ has a cocircuit of size less than four. Then $M$ can be obtained from a circuit of size at least three by $2$-summing a copy of $U_{2,4}$ to some, possibly empty, set of  elements of the circuit.
\end{lemma}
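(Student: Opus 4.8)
The plan is to argue by contradiction and adapt the proof of Lemma~\ref{cosimple_case} to the ternary setting, with Theorem~\ref{maintern} playing the role of Theorem~\ref{mainbin} and Lemma~\ref{freedom}(ii) replacing Lemma~\ref{freedom}(i). Suppose the statement fails. Since $M \in \mathcal{M}_3$, it is connected and every proper flat of $M$ is a $GF(3)$-comatroid; since $r(M) \geq 4$, Lemma~\ref{freedom}(ii) shows $M$ has no free element, for otherwise $M \cong U_{2,4}$, which is impossible, or $M$ is already of the excluded form. Throughout, the contradiction I will repeatedly aim for is a proper flat $F$ of $M$ for which $M|F$ and its complement $F^c$ in the spanning projective geometry are both connected of rank $r(F)$: by Lemma~\ref{not_comatroid_1} such an $M|F$ is not a $GF(3)$-comatroid. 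The function of Proposition~\ref{sum_of_connectivities} and Lemma~\ref{parallel_connection_complement} is precisely to guarantee that $F^c$ is connected of rank $r(F)$ once $r(F) \geq 4$ and $M|F$ is ``circuit-like'' or is a parallel connection of projective geometries.

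First I would treat the case that $M$ has a $2$-cocircuit. Here the argument of Lemma~\ref{cosimple_case} transcribes almost verbatim: choose a maximal non-trivial series class $S$, write $M = M_1 \oplus_2 M_2$ with $M_2$ a circuit on $S \cup p$ and $p$ the basepoint, and normalise so that $p$ lies in no $2$-circuit of $M_1$. If $p$ is free in $M_1$, then Lemma~\ref{freedom}(ii) gives that $M_1$ is $U_{2,4}$ or is obtained from a circuit through $p$ by $2$-summing copies of $U_{2,4}$ at elements other than $p$; since the $2$-sum of two circuits across a common basepoint is again a circuit and $2$-summing across distinct basepoints commutes, reassembling $M = M_1 \oplus_2 M_2$ puts $M$ into the excluded form, a contradiction. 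If $p$ is not free, take a non-spanning circuit $C_p$ of $M_1$ through $p$; then $(C_p - p) \cup S$ is a non-spanning circuit of $M$, and its closure $F$ is a connected proper flat. If $r(F) \geq 4$ this is the forbidden flat; if $r(F) = 3$ then $|C_p| = 3$ and $|S| = 2$, and a short argument forces every non-trivial series class of $M$ to have two elements, so $M_1$ has no series class of size at least three avoiding $p$. Since $M_1$ is ternary it has no $U_{2,5}$- or $U_{3,5}$-minor, and after the above reduction $M_1$ has no cocircuit of size less than four, so Theorem~\ref{maintern} yields a connected hyperplane $H$ of $M_1$ containing $p$; closing $(H \cup S) - p$ up in $M$ then produces the forbidden flat.

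The genuinely new case, and the one I expect to be the main obstacle, is when $M$ has a triad $T$ but no $2$-cocircuit. If $M|T \cong U_{2,3}$, so $T$ is a triangle-triad, then $\lambda_M(T) = 1$, so $T$ is $2$-separating in $M$ and the three-element side is $U_{2,4}$; thus $M = N \oplus_2 U_{2,4}$ with $N$ connected of rank $r(M) - 1$, and $N$ is analysed exactly as in the previous paragraph — either $N$ is a comatroid, in which case the absence of a forbidden flat forces $N$ into the shape of Lemma~\ref{freedom}(ii) and hence $M$ into the excluded form, or $N$ has a proper flat in $\mathcal{M}_3$ and one finishes by induction on $|E(M)|$. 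The delicate possibility is $M|T \cong U_{3,3}$: then $T$ is an independent triad, $E(M) - T$ is a hyperplane $H$ with $\lambda_M(T) = 2$, so no $2$-sum is directly available. The plan here is to show that such an $M$ must also possess a $2$-cocircuit or a triangle-triad — in the matroids of the excluded form the independent triads always occur ``paired'' with the triangle-triad of their $U_{2,4}$ summand — thereby returning to a case already handled; if that fails, one passes to $M/x$ or $M \ba x$ for a suitable $x \in T$ to recover the missing structure. In every case the outcome is either the forbidden flat, contradicting $M \in \mathcal{M}_3$, or an explicit description of $M$ in the excluded form, contradicting the assumption that the statement fails.
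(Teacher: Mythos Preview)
Your proposal has two genuine gaps, and the paper's route is quite different from the one you sketch.

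First, in the $2$-cocircuit case you try to invoke Theorem~\ref{maintern} on $M_1$. That theorem requires $M_1$ to have \emph{no cocircuits of size less than four}. You only argue that non-trivial series classes of $M$ have size two; this says nothing about triads of $M_1$, nor even about $2$-cocircuits of $M_1$ that are not series classes of $M$. In the binary Lemma~\ref{cosimple_case} this step worked because Theorem~\ref{mainbin} has a weaker hypothesis (it tolerates small cocircuits and offers the series-class alternative instead). Theorem~\ref{maintern} has no such escape clause, so the transcription breaks down exactly here.

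Second, your treatment of an independent triad is not a proof. Saying ``the plan is to show that such an $M$ must also possess a $2$-cocircuit or a triangle-triad, or else pass to $M/x$ or $M\backslash x$'' names a hope, not an argument, and there is no reason to expect either reduction to work in general.

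The paper avoids both difficulties with a single projective-geometric observation you are missing. It first proves: if $C$ is a non-spanning circuit of $M$ with $r(C)\ge 3$ that meets a cocircuit of size at most three, and $F$ is the projective flat spanned by $C$, then the affine cocircuit of $F$ complementary to the small-cocircuit hyperplane has at most three green points; since a ternary affine geometry of rank $\ge 3$ minus at most three points is still connected and spanning, $F|R$ is connected of rank $r(F)$, contradicting minimality. This sublemma immediately forces the basepoint $p$ to be free in $N$ in the $2$-cocircuit case (no Theorem~\ref{maintern} needed), and in the independent-triad case a short argument in the projective plane spanned by the triad, together with orthogonality, produces exactly such a circuit $C$ and again invokes the sublemma. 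The whole lemma then falls out of Lemma~\ref{freedom}(ii).
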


\begin{proof}
First we show the following. 

\begin{sublemma}
\label{helping_sublemma}
$M$ has no non-spanning circuit $C$ of rank at least three such that $C$ intersects a cocircuit of $M$ of size less than four.
\end{sublemma}

Assume that such a circuit $C$ exists and let $F$ be the projective flat spanned by $C$. Observe that $F$ has a cocircuit $A_F$ that has at most three green elements and so $F|R$ contains $A_F$ minus three points. Since $A_F$ is a ternary affine geometry of rank at least three, $F|R$ is connected of rank $r(F)$, a contradiction to the minimality of $M$. Thus \ref{helping_sublemma} holds. 

Now suppose that $M$ has a $2$-cocircuit $\{a,b\},$ say. Then $M = N \oplus_2 U_{2,3}$. If $N$ has an element $s$ parallel to the basepoint $p$ of the $2$-sum, then we move $s$ so that it is parallel to $p$ in $U_{2,3}$. 
Observe that if the basepoint $p$ is contained in a non-spanning circuit $D$ of $N$, then there is a non-spanning circuit $D'$ of $M$ that contains $\{a,b\}$ and has rank at least three, a contradiction to 
\ref{helping_sublemma}. 
Therefore $p$ is free in $N$. Thus, by Lemma~\ref{freedom}(ii), $N$ is obtained from a circuit of size at least three by $2$-summing a copy of $U_{2,4}$ to some of the elements of the circuit. If $\{a,b\}$ is in a triangle of $M$, then $M$ has a flat isomorphic to $N$. Thus $N$ is either a circuit of size at least four, or $N$ breaks up as a $2$-sum. By Corollary~\ref{cct}, or by Proposition~\ref{sum_of_connectivities} and Lemma~\ref{parallel_connection_complement}, $N$ is not a ternary comatroid, contradicting the minimality of $M$. 
Thus $\{a,b\}$ is not in a triangle of $M$, so $M$ satisfies the lemma. 

Next suppose that $M$ has a triad $\{a,b,c\},$ say. Observe that if $\{a,b,c\}$ is a triangle, we get our result as above. We may now assume that $\{a,b,c\}$ is independent. Let $\Pi$ be the projective plane spanned by $\{a,b,c\}$ and let $H$ be the projective hyperplane spanned by $E(M^c)-\{a,b,c\}$. It is clear that $\Pi|R$ is connected of rank three. Suppose that the projective lines spanned by $\{a,b\}, \{a,c\},$ and $\{b,c\}$ meet $H$ at $p, q,$ and $s$, respectively. Note that at most one of the points in $\{p,q,s\}$ is green otherwise $\Pi|G$ is connected of rank three, a contradiction. Thus we may assume that $q$ and $s$ are red.   We may also assume that $c$ is not free in $M$ otherwise we have the result by 
Lemma~\ref{freedom}(ii). Let $D$ be a non-spanning circuit of $M$ containing $c$. By orthogonality, $D$ contains either $a$ or $b$ and so $D$ has rank at least three. The result now follows by \ref{helping_sublemma}.
\end{proof}

\begin{lemma}
\label{rank_four_ternary}
Let $M$ be a matroid in $\mathcal{M}_3$ such that $r(M) = 4$. Then $M$ or $M^c$ has a cocircuit of size less than four.
\end{lemma}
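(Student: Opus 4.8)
The plan is to argue by contradiction: assume $M\in\mathcal M_3$, $r(M)=4$, and that both $M$ and $M^c$ have all cocircuits of size at least four. By Lemma~\ref{ternary_exceptional_case}, no member of $\mathcal M_3$ of rank at least four that arises here has a small cocircuit, so in particular $M$ and $M^c$ are both cosimple, and by Proposition~\ref{sum_of_connectivities} and Lemma~\ref{red_or_green_connected} (applied with $r=4$), for every rank-$3$ projective flat $F$ of $P_4=PG(3,3)$, exactly one of $F|G$ and $F|R$ is connected of rank $3$; call $F$ \emph{green} or \emph{red} accordingly. The goal is to derive a contradiction with the minimality of $M$, i.e.\ to produce a proper flat $F$ of $M$ (equivalently a projective flat) that is connected of rank $3$ and whose complement inside that projective flat is also connected of rank $3$ --- but by minimality that is impossible, so instead I must show the coloring of hyperplanes of $P_4$ cannot be consistent.

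First I would invoke Corollary~\ref{ternturn}: since $M$ is a simple connected ternary matroid with no cocircuits of size less than four, $M$ has at least two connected hyperplanes, and likewise $M^c$; so $P_4$ has at least two green hyperplanes and at least two red hyperplanes. Now I count the way hyperplanes and rank-$2$ projective flats (projective lines) of $P_4$ interact, in the same spirit as the proof of Theorem~\ref{main_theorem_rank_bound}, but one rank lower. The key local facts to establish are: (a) a green hyperplane cannot have ``too many'' lines of a different color --- more precisely, by Theorem~\ref{oxwu_result} two lines of the same color inside a hyperplane have a span of that color, forcing the hyperplane itself to that color, so at most one line in a given hyperplane has the opposite color; and (b) a rank-$3$ flat $F$ (a plane of $P_4$) lies in exactly $\frac{q^{2}-1}{q-1}+1 = 4$ hyperplanes, wait—more carefully, in $PG(3,3)$ a plane lies in $4$ hyperplanes? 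No: a rank-$3$ flat of a rank-$4$ geometry lies in $\frac{3^{4-3}-1}{3-1}$... I would just recompute this as the number of hyperplanes through a corank-$1$ flat, which is $q+1=4$, and show a majority-color argument: at least $\lceil (q+2)/2\rceil$ of those hyperplanes share the color of $F$, because otherwise the opposite-colored hyperplanes through $F$ each contribute at most one point of the minority color off $F$, producing a cocircuit of size at most... here I would count carefully to get a cocircuit of size at most three in $M$ or $M^c$, contradicting the hypothesis. (This is the rank-$4$ analogue of \ref{hsize2}.)

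With these pieces in hand, the combinatorial heart is a counting argument over the bipartite incidence structure between hyperplanes of $P_4$ and rank-$2$ or rank-$3$ projective flats, exactly paralleling the cross-edge count in Theorem~\ref{main_theorem_rank_bound}: bounding the number of ``cross'' incidences from above by (a) and from below by a pairing argument like \ref{hsize2}, one gets $|G_1|\cdot|R_1|$ small, and combined with the Corollary~\ref{ternturn} lower bounds $|G_1|,|R_1|\ge 2$ and the total $|G_1|+|R_1|=\frac{3^4-1}{3-1}=40$ hyperplanes of $P_4$, a contradiction should drop out. I expect the main obstacle to be that the rank-$4$ case is tighter than the rank-$\ge 6$ case of Theorem~\ref{main_theorem_rank_bound}: the clean inequality $\tfrac12|G_1||R_1|\le|G_1|+|R_1|$ no longer immediately contradicts the available lower bound of merely $2$, so I will likely need the finer structural input --- the explicit list of connected hyperplanes forced by small rank, the fact that a connected rank-$3$ ternary matroid on few points is very restricted, and possibly a short case analysis on the small rank-$3$ matroids $P(U_{2,3},U_{2,3})$, $U_{2,4}\oplus_2 U_{2,3}$, $U_{2,4}\oplus_2 U_{2,4}$, $P(U_{2,4},U_{2,3})$, $M(K_4)$, $\mathcal W^3$ that can occur as connected planes, checking directly that none of their complements inside a plane of $P_4$ is simultaneously connected of rank $3$ except in a way that forces the contradiction. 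If the pure counting does not close, I would fall back on a SageMath-style exhaustive check over the (few) cosimple rank-$4$ ternary matroids $M$ with $|E(M)|+|E(M^c)|=40$, analogous to the algorithm in Section~\ref{bin}, to finish.
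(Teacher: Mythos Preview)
Your plan has a structural gap that prevents the cross-edge counting from getting off the ground at rank four. The argument of Theorem~\ref{main_theorem_rank_bound} (and its ternary version, Theorem~\ref{main_theorem_rank_bound2}) hinges on rank-$(r-2)$ projective flats having a well-defined colour, which is guaranteed only when that rank is at least three. For $r(M)=4$ the rank-$(r-2)$ flats are projective \emph{lines}, four-point sets in $PG(3,3)$; a line with two red and two green points has $L|R\cong L|G\cong U_{2,2}$, both disconnected, so it is neither red nor green. Your step~(a) (at most one line of the opposite colour in a given plane) and the whole bipartite cross-edge count therefore have no well-defined objects to work with. Your step~(b) is also mis-stated: in $PG(3,3)$ a rank-$3$ flat \emph{is} a hyperplane, so it does not lie in four hyperplanes; you mean lines. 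Even after that correction, the numerics do not close: with only $40$ hyperplanes in $PG(3,3)$ the inequality $\tfrac{1}{3}|G_1||R_1|\le |G_1|+|R_1|$ yields at best $\min(|G_1|,|R_1|)\le 6$, which does not contradict the lower bound of~$2$ from Corollary~\ref{ternturn}. Finally, the proposed SageMath fallback is not realistic: the condition $|E(M)|+|E(M^c)|=40$ is vacuous (it holds for every rank-$4$ ternary $M$), and the family of cosimple rank-$4$ ternary matroids with no cocircuit of size less than four is far from ``few''.

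The paper's proof takes an entirely different, geometric route. It first shows that a connected rank-$3$ ternary matroid whose complement in $PG(2,3)$ is not connected of rank~$3$ must contain $AG(2,3)\backslash e$; hence every green (or red) hyperplane has at least eight green (red) points. Working with \emph{triangles} rather than lines, it proves that each red triangle lies in exactly three red hyperplanes and exactly one green hyperplane, forcing $18\le |R|,|G|\le 22$. A short argument then produces a red triangle $T$ whose fourth collinear point $g$ is green. Analysing the single green hyperplane through $T$ and the three red hyperplanes through $T$, together with the lines through $g$ inside each, the paper finds a projective plane that simultaneously contains a red $4$-circuit and a green $4$-circuit, contradicting the minimality of $M$. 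The key missing idea in your proposal is to drop down from colouring corank-$2$ flats to tracking monochromatic \emph{triangles} and the exact sizes $|R|,|G|$, which is what makes the rank-$4$ case tractable.
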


\begin{proof}
Assume that neither $M$ nor $M^c$ has a cocircuit of size less than four. 

\begin{sublemma}
\label{ternary_hyperplane_bound_sublemma}
A rank-$3$ simple ternary matroid $N$ that is connected either has a connected rank-$3$ ternary complement or is $AG(2,3)\ba e$ or an extension of it. 
\end{sublemma}

Assume that $N^c$ is not connected or that $r(N^c) < 3$. Then $N^c$ is a restiction of $U_{2,4} \oplus U_{1,1}$. Thus $N$ is $AG(2,3)\ba e$ or an extension of it. Hence \ref{ternary_hyperplane_bound_sublemma} holds.

The next assertion  is an immediate consequence of Corollary~\ref{ternturn}.

\begin{sublemma}
\label{ternary_hyperplane_connected_sublemma}
Both $M$ and $M^c$ have a connected hyperplane.
\end{sublemma}

By \ref{ternary_hyperplane_bound_sublemma} and \ref{ternary_hyperplane_connected_sublemma}, it follows that we have both a red and a green triangle. In the arguments that follow, we will frequently exploit the  symmetry between $R$ and $G$. 

\begin{sublemma}
\label{5t+3_sublemma}
If a red triangle $T$ is contained in exactly $t$ red hyperplanes, then $|R| \geq 5t+3$.
\end{sublemma}

By \ref{ternary_hyperplane_bound_sublemma}, each red hyperplane containing $T$ has at least five red points not in the projective closure of $T$. The result is immediate.

\begin{sublemma}
\label{exactly_three_hyperplanes_sublemma}
Every red triangle $T$ is contained in exactly three red hyperplanes. Moreover, $|R| \geq 18$ and $|G| \geq 18$.
\end{sublemma}

Note that $T$ is  in at least two red hyperplanes otherwise we get a red cocircuit of size less than four. Assume that $T$ is in exactly two red hyperplanes, $H_1$ and $H_2$. Because each of $H_1$ and $H_2$ is $AG(2,3) \ba e$ or an extension of it, one can check that each element $t$ of $T$ is in a red triangle $T_i$ in $H_i$ that meets $T$ in $t$. Now consider the projective plane $\Pi$ that is spanned by $T_1$ and $T_2$. There are two green planes that contain $T$. Each of these has  $AG(2,3) \ba e$ as a restriction and meets $\Pi$ in a line through $t$. This line contains at least two green points. Hence $\Pi$ contains both a red $4$-circuit and a green $4$-circuit, a contradiction. We conclude that $T$ is in at least three red hyperplanes. Thus, by \ref{5t+3_sublemma}, $|R| \geq 18$. By symmetry, $|G| \geq 18$, so $|R| \leq 22$. Hence, by \ref{5t+3_sublemma} again, $T$ is not in four red hyperplanes. Therefore \ref{exactly_three_hyperplanes_sublemma} holds.

\begin{sublemma}
\label{black_4_point_line}
There is a red triangle that is not contained in a red $4$-point line.
\end{sublemma}

Suppose every red triangle is contained in a red $4$-point line. As every red hyperplane has   $AG(2,3) \ba e$ as a restriction, one easily checks that every red hyperplane is a $PG(2,3)$. Since every red triangle is in three red hyperplanes, it follows that $|R| \geq 31$, a contradiction to \ref{exactly_three_hyperplanes_sublemma}.

We now take a red triangle $T$ for which the fourth point, $g$, on the projective line spanned by $T$ is green. Now $T$ is in exactly three red  hyperplanes, $R_1, R_2,$ and $R_3,$ and one green hyperplane, $G_0$. Because $G_0$ contains at most one red point not in $T$, there are three lines, $G_1, G_2,$ and $G_3$, in $G_0$ that contain $g$ and at least two other green points.

We may assume that $|R| \leq |G|$, so $|R| \in \{18,19,20\}$. We may also assume that $|R_1 - T| \geq |R_2 - T| \geq |R_3 - T| \geq 5$. As $|R_1 - T| + |R_2 - T| + |R_3 - T| \in \{15,16,17\}$, we see that $|R_3 - T| = 5,$ that $|R_2 - T| \in \{5,6\},$ and that $|R_1 - T| \in \{5,6,7\}$. Thus $R_3$ contains a green triangle $T_1$ containing $g$, so each of the projective planes spanned by  $T_1 \cup G_1, T_1 \cup G_2,$ and $T_1 \cup G_3$ contains a green $4$-circuit. Moreover, each such plane meets each of $R_1$ and $R_2$ in a line through $g$. For each $i$ in $\{1,2\}$, the plane $R_i$ has at most one line through $g$ that does not contain at least two red points. Hence, for some $j$ in $\{1,2,3\}$, the projective plane spanned by $T_1 \cup G_j$ meets both $R_1$ and $R_2$ in a line through $g$ containing at least two red points. Then $T_1 \cup G_j$ contains a red $4$-circuit. As it contains a green $4$-circuit, we have a contradiction. 
\end{proof}

\begin{theorem}
\label{main_theorem_rank_bound2}
Let $M$ be a matroid in $\mathcal{M}_3$ such that $r(M) \geq 4$. Then $M$ or $M^c$ has a cocircuit of size less than four.
\end{theorem}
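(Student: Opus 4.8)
The plan is to argue by contradiction, assuming that $M$ is a matroid in $\mathcal{M}_3$ of rank $r \ge 5$ (the case $r = 4$ being exactly Lemma~\ref{rank_four_ternary}) such that neither $M$ nor $M^c$ has a cocircuit of size less than four. As in the binary rank-bound proof (Theorem~\ref{main_theorem_rank_bound}), I would set up the two-coloring of $P_r := PG(r-1,3)$ with green corresponding to $E(M)$ and red to $E(M^c)$, and observe that for every projective flat $F$ with $4 \le r(F) < r$, the minimality of $M$ together with Lemma~\ref{red_or_green_connected} forces exactly one of $F|G$, $F|R$ to be connected of rank $r(F)$; call $F$ green or red accordingly. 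The two basic structural facts to extract first are: (a) an analogue of \ref{hsize2}, namely that among the four projective hyperplanes through a rank-$(r-2)$ flat $F$, at least three have the same color as $F$ — for if two hyperplanes $H_1, H_2$ through a green $F$ were red, then each $H_i - F$ contains at most a bounded number of green points (here the bound is governed by how few green points a red hyperplane can carry, using \ref{ternary_hyperplane_bound_sublemma}-type reasoning generalized to rank $r-1$), producing a green cocircuit that is too small; and (b) an analogue of \ref{nbr} via Theorem~\ref{oxwu_result}: at most one rank-$(r-2)$ flat inside a given hyperplane $H$ can have the color opposite to $H$, since two same-colored such flats would have their join colored the same, contradicting that the join is a hyperplane of the opposite declared color.

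With (a) and (b) in hand, I would run the same bipartite double-counting as in the binary case, now over $P_r$: let $G_1, R_1$ be the green and red hyperplanes and $G_2, R_2$ the rank-$(r-2)$ flats of each color, form the incidence bipartite graph, and bound the number of "cross" flag-pairs (a hyperplane of one color containing a rank-$(r-2)$ flat of the other). Condition (b) bounds the cross edges above linearly in $|G_1| + |R_1|$; condition (a) — combined with the fact that a rank-$(r-2)$ flat lies in exactly four hyperplanes over $GF(3)$ — bounds them below by a fixed positive fraction of $|G_1|\cdot|R_1|$, via the observation that each pair $\{H_G, H_R\}$ of oppositely-colored hyperplanes with $H_G \cap H_R$ of rank $r-2$ gives rise to a cross flag, and each cross flag arises from a bounded number of such pairs (the "third and fourth hyperplanes through the intersection" argument, now with four hyperplanes through a corank-$2$ flat rather than three). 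This yields $\min(|G_1|, |R_1|) \le c$ for a small constant $c$. To reach a contradiction I then need a lower bound on the number of connected hyperplanes of a connected ternary matroid with no small cocircuits: this is precisely Corollary~\ref{ternturn}, which gives at least two — I would need to push that to "enough" (at least $c+1$) in the relevant regime, or else handle the few matroids with very few connected hyperplanes directly, perhaps invoking the rank-$r-1$ inductive hypothesis (namely this very theorem, or Lemma~\ref{rank_four_ternary} as the base) to say that each hyperplane, being a proper flat, is a ternary comatroid and hence — if it is connected — is highly connected by Proposition~\ref{comat_conn}, which in turn forces it to have many elements and interacts with the counting.

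The main obstacle I anticipate is exactly the gap between the "at least two connected hyperplanes" guaranteed by Corollary~\ref{ternturn} and whatever larger constant the double-counting produces: the binary argument could appeal to Theorem~\ref{mwu_result} of McNulty–Wu, which gives at least four connected hyperplanes and an element-in-two-hyperplanes statement, and there is no ready-made ternary analogue with a constant bigger than two. So the real work is to either (i) strengthen the counting so that $c = 1$ suffices — e.g. by using that a hyperplane with at most one oppositely-colored rank-$(r-2)$ flat is nearly monochromatic, hence nearly all of $P_{r-1}$, hence has many points, and play the resulting size inequalities $|G| + |R| = \frac{3^r - 1}{2}$ against each other — or (ii) enumerate the (necessarily very restricted) configurations where one color class has only one hyperplane and derive a contradiction from the cocircuit hypothesis directly. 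I would pursue (i) first: note that if $|G_1| = 1$ then the unique green hyperplane carries almost all green points while every other hyperplane is red and hence (by \ref{ternary_hyperplane_bound_sublemma}-style bounds at rank $r-1$) carries very few green points, so the green points outside the one green hyperplane form a small cocircuit of $M$ — contradicting the hypothesis — unless $G$ is essentially contained in a hyperplane, i.e. $r(M) < r$, which is absurd. That reduction to a small green cocircuit is the crux, and combining it symmetrically for red completes the argument.
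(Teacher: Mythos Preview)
Your architecture is the paper's: color the rank-$(r-1)$ and rank-$(r-2)$ projective flats, prove the two local facts (a) and (b), and double-count cross flags. One correction: your (a) should read ``at least \emph{two}'' of the four hyperplanes through $F$ share the color of $F$, not three. If $F$ is green and three of the $H_i$ are red, then each such $H_i - F$ carries at most one green point (the component of $H_i|G$ containing $F|G$ spans $F$, leaving at most a coloop outside), so the cocircuit $E(M) - H_4$ has at most three elements --- contradiction. But with only two red hyperplanes no small cocircuit is forced, since the remaining green hyperplanes through $F$ can contribute many green points to any candidate cocircuit. Fortunately this weaker (a) still gives at most three mixed pairs $(H_G,H_R)$ per cross edge, so the inequality $\tfrac{1}{3}|G_1||R_1| \le |G_1| + |R_1|$ survives and yields, for $r \ge 5$, that the smaller of $|G_1|,|R_1|$ is at most $3$.

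The genuine gap is your endgame. Option (i) rests on the assertion that a red hyperplane ``carries very few green points,'' and this is simply false: $H$ red means only that $H|G$ fails to be connected of rank $r-1$, and a disconnected rank-$(r-1)$ restriction can have arbitrarily many elements. So no small green cocircuit is produced from $|G_1|=1$, and option (ii) is not a proof. The paper closes the gap by iterating the count rather than by changing tools. From $|R_1| \le 3$ and the correct version of (a), every flat in $R_2$ lies in at least two red hyperplanes and is therefore the intersection of two of the at most three members of $R_1$; hence $|R_2| \le \binom{3}{2} = 3$. Again by (a), each such flat lies in at most two green hyperplanes, giving at most six $G_1R_2$-edges; and (b) gives at most $|R_1|$ many $G_2R_1$-edges. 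Thus $\tfrac{1}{3}|G_1||R_1| \le |R_1| + 6$, and since $|G_1| \ge \tfrac{3^5-1}{2} - 3 = 118$ this forces $|R_1| = 0$, contradicting Corollary~\ref{ternturn}. The idea you are missing is exactly this: bound $|R_2|$ in terms of $|R_1|$ via (a), and feed that back into the cross-edge count so that it bootstraps all the way to zero --- at which point even the modest ``at least two connected hyperplanes'' from Corollary~\ref{ternturn} is more than enough.
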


\begin{proof}
By Lemma \ref{rank_four_ternary}, the result holds when $r(M) = 4$. Therefore we may assume that $r(M) \geq 5$. Further assume that neither $M$ nor $M^c$ has a cocircuit of size less than four. We now let $P_{r(M)}$ denote the
ternary projective geometry of rank $r(M)$. A flat $F$ of $P_{r(M)}$ with $3 \leq r(F) < r(M)$ is \textbf{red} or \textbf{green} depending on whether $F|R$ or $F|G$ is connected of rank $r(F)$. Let $F$ be a rank-$(r-2)$ flat of $P_{r(M)}$. Then $F$ is contained in exactly four hyperplanes, say $H_1, H_2$, $H_3$, and $H_4$ of $P_{r(M)}$.
Moreover, as   neither $M$ nor $M^c$ has a cocircuit of size less than four, we deduce the following. 

\begin{sublemma}
\label{ternary_hsize2}
At least two of $H_1$, $H_2$, $H_3$, and $H_4$ have the same color as $F$.
\end{sublemma}

Now let $G_1$ and $R_1$ be the sets of green and red hyperplanes, respectively, of $P_{r(M)}$. The following is a straightforward consequence of Theorem \ref{oxwu_result}.

\begin{sublemma}
\label{ternary_nbr}
If $H \in G_1$, then at most one of the rank-$(r-2)$ projective flats contained in $H$ is red.
\end{sublemma}

Let $G_2$ and $R_2$ be the sets of green and red projective flats of $P_{r(M)}$ of rank $r-2$. We consider the bipartite graph $B$ with vertex classes $G_1 \cup R_1$ and $G_2 \cup R_2$ such that a vertex $X$ in $G_1 \cup R_1$ is incident to a vertex $Y$ in $G_2 \cup R_2$ if $Y \subseteq X$. As in the proof of Theorem \ref{main_theorem_rank_bound}, by \ref{ternary_nbr}, the number of cross edges of this graph is at most $|G_1| + |R_1|$. 

Each pair $(H_G, H_R)$, where $H_G \in G_1$ and $H_R \in R_1$, corresponds to a cross edge $e$. Note that at most three such pairs can correspond to this edge $e$. Thus the number of cross edges is at least $\frac{1}{3}|G_1||R_1|$, so $\frac{1}{3}|G_1||R_1| \leq |G_1|+|R_1|$. 
By symmetry, we may suppose that $|G_1| \geq |R_1|$. Thus $|R_1| \leq 3 + \frac{3|R_1|}{|G_1|} \leq 6$. Since $|G_1| + |R_1| = \frac{3^{r(M)}-1}{2}$ and $r(M) \geq 5$, we see that $|G_1| \geq 115$, so $|R_1|\leq 3 + \frac{3|R_1|}{|G_1|}$. Hence $|R_1|  \leq 3$ and $|G_1| \ge 118$. Since every red projective flat of rank $r(M)-2$ is contained in at least two red projective hyperplanes, it follows that $|R_2| \leq 3$. By \ref{ternary_hsize2}, a  flat in $R_2$ is contained in at most two hyperplanes in $G_1$ and so the number of $G_1R_2$-edges is at most six. Thus $\frac{1}{3}|G_1||R_1| \leq |R_1| + 6$, so $|R_1| \leq \frac{3|R_1|+18}{|G_1|}$. As $|R_1| \leq 3$ and $|G_1| \geq 118$, it follows that $|R_1|=0$, a contradiction to Lemma~\ref{ternturn}. 
\end{proof}

\begin{proof}[Proof of Theorem~\ref{main_ternary}.] 
A routine check shows that, up to complementation,   $U_{3,4},$ $P(U_{2,3},$ $U_{2,3}),$ $U_{2,4} \oplus_2 U_{2,3},$ $R_6,$ $P(U_{2,4},U_{2,3}),$ $M(K_4),$ and $\mathcal{W}^3$ are the only connected ternary matroids of rank three whose complements are also connected of rank three. 
Theorem~\ref{main_ternary}   now follows from Lemma~\ref{comat_small_rank}, Lemma~\ref{ternary_exceptional_case}, 
Lemma~\ref{rank_four_ternary}, and Theorem~\ref{main_theorem_rank_bound2}. 
\end{proof}

\begin{proof}[Proof of Corollary~\ref{cor_ternary}.] 
In view of Lemma~\ref{comat_small_rank} and Theorem~\ref{main_ternary}, it suffices to show that if $M^c$ is obtained from a circuit of size at least three by $2$-summing a copy of $U_{2,4}$ to some, possibly empty, set of elements of the circuit, then $M$ is an induced-minor-minimal ternary non-comatroid. But, when $M^c$ is as specified, $M/e$ has at most one point and so is a ternary comatroid.
\end{proof}

\section*{Acknowledgement}

The authors thank Guoli Ding for suggesting the study of comatroids.


\begin{thebibliography}{99}


\bibitem{brwlucas} T.H. Brylawski and D. Lucas, Uniquely representable combinatorial geometries, {\it Teorie Combinatorie} (Proc. 1973 Internat. Colloq.), pp. 83–104, Accademia Nazionale dei Lincei, Rome, 1976.

\bibitem{corneil} D.G. Corneil, H. Lerchs, L. Stewart Burlingham,  Complement reducible graphs, {\em Discrete Appl. Math.} \textbf{3} (1981), 163--174.

\bibitem{corneil2} D.G. Corneil, Y. Perl, L.K. Stewart, A linear recognition algorithm for cographs, {\em SIAM J. Comput.} \textbf{14} (1985), 926--934.

\bibitem{diestel} R. Diestel {\em Graph Theory}, Third edition, Springer, Berlin, 2005.

\bibitem{jaegar} F. Jaeger, Flows and generalized coloring theorems in graphs, {\em J. Combin. Theory Ser. B}, \textbf{26} (1979), 205--216.


\bibitem{jung} H.A. Jung, On a class of posets and the corresponding comparability graphs, {\em J. Combin. Theory Ser. B} \textbf{24} (1978), 125--133 .

\bibitem{kel} A.K. Kelmans,  The concept of a vertex in a matroid, the nonseparating cycles of a graph and a new criterion for graph planarity, {\em Algebraic methods in graph theory, Vol. I, II (Szeged, 1978)}, pp. 345--388, Colloq. Math. Soc. J\'{a}nos Bolyai, 25, North-Holland, Amsterdam-New York, 1981.


\bibitem{marowh} D. Mayhew, G. Royle and G. Whittle, The internally $4$-connected matroids with no $M(K_{3,3})$-minor, {\em Mem. Amer. Math. Soc.}, \textbf(208) (2010), no. 981. 



\bibitem{mwu} J. McNulty and H. Wu, Connected hyperplanes in binary matroids, {\em J. Combin. Theory Ser. B}, \textbf{79} (2000), 87--97.

\bibitem{ox78} J.G. Oxley,  Cocircuit coverings and packings for binary matroids, {\em Math. Proc. Cambridge Philos. Soc.} {\bf 83} (1978),  347--351.




\bibitem{ox2} J. Oxley, {\em Matroid Theory}, First  Edition, Oxford University  
Press,
New York, 1992.


\bibitem{ox1} J. Oxley, {\em Matroid Theory}, Second Edition, Oxford University  
Press,
New York, 2011.



\bibitem{oxwu} J. Oxley and H. Wu, On matroid connectivity, {\em Discrete Math.} \textbf{146} (1995), 321--324.


\bibitem{oxjag} J. Oxley and J. Singh, Generalizing cographs to 2-cographs, arxiv: 2103.00403.





\bibitem{sage} SageMath, the Sage Mathematics Software System (Version 8.2), The Sage
Developers, 2019, http://www.sagemath.org.


\bibitem{sein} D. Seinsche, On a property of the class of $n$-colourable graphs, {\em J. Combin. Theory Ser. B} \textbf{16} (1974), 191--193 .










\end{thebibliography}
\end{document}